\newtheorem{theorem}{Theorem}[section]
\newtheorem{lemma}[theorem]{Lemma}
\newtheorem{proposition}[theorem]{Proposition}
\newtheorem{corollary}[theorem]{Corollary}
\newtheorem{example}[theorem]{Example}
\newtheorem{claim}[theorem]{Claim}
\theoremstyle{definition}
\numberwithin{equation}{section}
\newtheorem{remark}[theorem]{Remark}
\newcommand{\R}{\mathbb{R}}
\newcommand{\Z}{\mathbb{Z}}
\newcommand{\Sph}{\mathbb{S}}        
\newcommand{\cH}{\mathcal{H}}        
\newcommand{\cF}{\mathcal{F}}
\newcommand{\cP}{\mathcal{P}}        
\newcommand{\Gr}[2]{G(#1,#2)}        
\newcommand{\opV}{\mathds{V}}       
\DeclareMathOperator{\vol}{vol}
\DeclareMathOperator{\MV}{MV}
\DeclareMathOperator{\argmin}{arg\,min}
\DeclarePairedDelimiter{\tnorm}{\lvert\!\lvert\!\lvert}{\rvert\!\rvert\!\rvert}
\begin{document}

\begin{center}
{\LARGE Computing intrinsic volumes of sublevel sets and applications}
\end{center}
\smallskip
\begin{center}
{\Large \textsc{Tr\'i Minh L\^e \& Khai--Hoan Nguyen--Dang}}
\end{center}

\bigskip

\noindent\textbf{Abstract.} 
Intrinsic volumes are fundamental geometric invariants generalizing volume, surface area and mean width for convex bodies.
We establish a unified Laplace--Grassmannian representation for intrinsic and dual volumes of convex polynomial sublevel sets. 
More precisely, let $f$ be a convex $d$--homogeneous polynomial of even degree $d \geq 2$ which is positive except at the origin.
We show that the intrinsic/dual volumes of the sublevel set~$[f \le 1]$ admit Laplace-type integral formulas obtained by averaging the infimal projection and restriction of~$f$ over the Grassmannian. 
This explicit representation yields:
\begin{itemize}
    \item[(i)] L\"owner--John--type existence and uniqueness results, extending beyond the classical volume case; 
    \item[(ii)]  a block decomposition principle describing factorization of intrinsic volumes under direct-sum splitting;
    \item[(iii)] a coordinate-free formulation of Lipschitz-type lattice discrepancy bounds. 
\end{itemize}
The resulting formulas enable analytic treatment for a broad class of geometric quantities, providing direct access to variational and arithmetic applications as well as new structural insights.

\bigskip

\noindent\textbf{Key words.} Intrinsic Volumes, Dual Volumes, L\"owner--John--type Ellipsoids, Lattice Discrepancy.
\vspace{0.6cm}

\noindent\textbf{2020 Mathematics Subject Classification.} \ \textit{Primary} 52A39; \ \textit{Secondary} 52A40, 52A38, 52C07.


\section{Introduction}\label{sec:intro}

Intrinsic volumes $V_j$ (and their dual counterparts $\widetilde V_j$) are central invariants in convex and integral geometry; they interpolate between volume, surface area, mean width and Euler characteristic.
They can be defined via the classical Steiner formula, which expands the volume of the convex body $K + \varepsilon B^n$ as
\[
\vol_n(K + \varepsilon B^n)
  = \sum_{j=0}^n \kappa_{n-j}\, V_j(K)\, \varepsilon^{\,n-j},
\]
where \(\kappa_m\) denotes the volume of the unit \(m\)–ball.
In particular, \(V_{n-1}\) and \( V_1\) are proportional to surface area and mean width, respectively. 
Crucially, they admit elegant integral geometric representations via the Cauchy–Kubota formulas, see e.g. \cite{Schneider,SW_2008}.
In this paper, we study these invariants for \emph{sublevel sets of positively $d$–homogeneous convex polynomials} \(f\in\cP_{n,d}\): 
\[
[f\le \alpha]\ :=\ \{x\in\R^n:\ f(x)\le \alpha\},\quad \ \alpha>0.
\]
Such sets appear naturally as convex models beyond ellipsoids (the case \(d=2\)).
Our goal is to develop \emph{explicit, computable formulas} for
\[
\text{ intrinsic volumes }V_j([f\le\alpha])\quad\text{and dual volumes }\quad \widetilde V_j([f\le\alpha]),
\]
that make transparent the dependence on dimension and degree.
These formulas further facilitate optimization analysis within $\cP_{n,d}$ and provide tools for arithmetic applications such as lattice point enumeration and height computations.

\medskip

Our starting point is a remarkable identity, originally derived via Fourier analysis and observed by Lasserre \cite{La_2015}, which connects the volume of sublevel set of a function $f$ with an integration of $\exp(-f(x))$.
More precisely, we have
\begin{center}
    \textit{If $f : \R^n \to [0, + \infty)$ is a homogeneous polynomial of even degree $d \geq 2$, then it holds
    \begin{equation}\label{lasserre}
        \vol_n([f \leq \alpha]) = \dfrac{\alpha^{n/d}}{\Gamma(1 + n/d)} \int_{\R^n} e^{-f(x)} dx, \quad \text{ for every } \alpha > 0. 
    \end{equation}
    }
\end{center}

\medskip

In the present work, we extend this perspective from volume to all intrinsic and dual intrinsic volumes.
To this end,  we combine this with the projection/section operators defined by, for every $E \in \Gr{j}{n}$,
\[
\Pi_E f(y):=\inf_{z\in E^\perp} f(y+z) \text{ for every  $y \in E$} \quad \text{ and } \quad \mathscr R_E f:=f|_E,
\]
and then average over the Grassmannian \(\Gr{j}{n}\). 
This yields our main representation result, see \Cref{thm.representation},
\begin{equation}\label{LG-repre}
\begin{split}
    V_j([f\le \alpha]) 
    = & ~ \frac{\alpha^{j/d}\beta_{j,n}}{\Gamma(1+j/d)}
    \int_{\Gr{j}{n}}\int_E \exp(-\Pi_E f(y))\,dy\,d\nu_j(E),\\
    \widetilde V_j([f\le \alpha]) 
    = & ~ \ \frac{\alpha^{j/d} \sigma_{j,n}}{\Gamma(1+j/d)}
    \int_{\Gr{j}{n}}\int_E \exp(-\mathscr R_E f(y))\,dy \,d\nu_j(E),
\end{split}
\end{equation}
with the usual normalized constants \(\beta_{j,n}\) and \(\sigma_{j,n}\) from the Cauchy--Kubota formulas.
Here, \(\nu_j\) is the unique Haar measure on \(\Gr{j}{n}\). 
Conceptually, intrinsic and dual intrinsic volumes of \([f\le\alpha]\) are Laplace averages of the infimal projection \(\exp(-\Pi_E f)\) and the restriction \(\exp(-\mathscr R_E f)\) over $\Gr{j}{n}$, respectively.

\medskip

It is worth mentioning that other works have investigated intrinsic volumes of sublevel sets and the role of the projection operator in valuation theory.
For instance, intrinsic volumes for $C^3$--smooth functions on a Riemannian manifold have been obtained through formulas in terms of gradients and Hessians, leading to regularity and continuity results, see~\cite{J_2019}.
In a different direction, the projection operator has been employed in valuation theory for convex supercoercive functions, notably in~\cite{CLM_2022, CLM_2025}, where invariant valuations are characterized via integral representations on subspaces.

\medskip

Although these works share a formal resemblance to our results, they differ fundamentally in scope and methodology: the former relies on smooth differential geometry while the latter concerns classification results in valuation theory (noting that the Laplace–Grassmannian representations are not valuations).
In contrast, our framework combines the Laplace transform with infimal projection and section operators to derive explicit integral formulas for intrinsic and dual volumes, extending beyond both the smooth and axiomatic valuation settings.

\medskip
The Laplace–Grassmannian representations \eqref{LG-repre} serve as the foundation for several new results and applications, which we summarize below.
\medskip

\textbf{Applications}

\medskip

\begin{itemize}
    \item[$\bullet$]
    (\textit{Structural properties})
    Lower-semicontinuity and log-convexity of the functionals $\opV_j : f \mapsto V_j([f \leq 1)$ and $\widetilde\opV_j : f \mapsto \widetilde V_j([f \leq 1])$ follow from Fatou's lemma and H\"older inequality applied on the Grassmannian fibers.
    The key novelty lies in the proof of strict log--convexity of these functionals, see Corollary~\ref{cor:strict-logconvex}.
   The case ${j = n}$ follows directly from the fact that any polynomial equation has finitely many roots (see e.g \cite{La_2015}).
   To treat the case $1 \leq  j \leq n - 1$, our argument proceeds by  a Crofton--type disintegration argument on the incidence manifold
    \begin{align*}
        \mathcal I_j = \big\{ (E, e) \in \Gr{j}{n} \times \mathbb S^{n - 1}~:~ e \in E \big\}.
    \end{align*}
    This allows us to lift the fiberwise equality $\Pi_E f = \Pi_E g$ (or $\mathscr R_E f = \mathscr R_E g$) to the equality of $f$ and $g$ on the unit sphere.
    Therefore, the proof of strict log-convexity of the functionals $\opV_j$ and $\widetilde\opV_j$--which ensures the uniqueness in related optimization problems--emerges from an integral--geometric rather than algebraic argument.

    \item[$\bullet$]
    (\textit{L\"owner--John--type results})
    Fix a compact set $K$ containing the origin in its interior.
    The main purpose of investigating the aforementioned properties is to address the question:
    \begin{center}
        \textit{
        finding a function $f \in \cP_{n, d}$ minimizing $V_j([f \leq 1])$ $($or $\widetilde V_j([f \leq 1]$ $)$ with $K \subset [f \leq 1]$.
        }
    \end{center}
    The case $d = 2$ (quadratic) and $j = n$ (volume) recovers the classical L\"owner--John ellipsoid problem, see e.g \cite{Todd_2016}.
    Thanks to the representation~\eqref{lasserre}, this result extends naturally to arbitrary homogeneous polynomials of even degree beyond the quadratic case, see \cite{La_2015} and further to the class of log--concave functions \cite{GMV_2022}.
    For a convex body \(K\), the quadratic case \(d = 2\) with \(j \in \{1,\dots,n-1\}\) was also considered by Gruber~\cite{G_2008}, who employed a Vorono\u{\i}--type method to establish the existence of a circumscribed ellipsoid minimizing the \(j\)-th intrinsic volumes.
    See also \cite{Schrocker_2008} for another approach to get the uniqueness of such ellipsoids.

    \medskip

    Using the Laplace--Grassmannian representations~\eqref{LG-repre}, we extend this framework to encompass all $ j \in \{1, \cdots, n - 1 \}$ and all even ${d \geq 2}$, 
    thereby unifying the intrinsic and dual volume cases beyond the classical quadratic and volumetric settings.
    We further derive the first--order (KKT) conditions for the associated L\"owner--John--type problems.
    To get the G\^ateaux derivative for KKT conditions, on the section side $\mathscr R_E$, one may differentiate the Laplace--Grassmannian average directly,  while on the projection side $\Pi_E$, one has to use the Danskin--type envelope theorem, see Lemmas~\ref{lem.Vtil-gateaux}--~\ref{lem.V-gateaux}.

    \item
    (\textit{Block factorization})
    A notable feature of the Laplace--Grassmannian representations is their compatibility with orthogonal direct sum.
    As we will show in Proposition~\ref{prop:block-separable}, when the ambient space admits a decomposition
    \[
        \R^n = U_1 \oplus \cdots \oplus U_B, \quad \text{ with $ U_b\leq \R^n$ for every } b \in \{1, \cdots, B \},
    \]
    and $f$ is block--separable, that is, (denoting $P_{U_b}$ the projection onto $U_b$), 
    \begin{align*}
        f(x) = \sum_{b = 1}^B f_b(P_{U_b} x), \quad \text{ where each $f_b$ is a convex positively $d$--homogeneous polynomial on $U_b$},
    \end{align*}
    then the infimal projection and section operators preserve this block separability.
    Consequently, for such  functions $f$, the Laplace--Grassmannian representation factors across active blocks:
    \begin{align*}
    V_j([f \leq \alpha]) = \dfrac{\alpha^{j/d}\beta_{j, n}}{\Gamma(1 + j/d)} \int_{G(j, n)} \prod_{b \in B_E} \int_{E \cap U_b} \exp(- \Pi_{E \cap U_b} f_b(y_b)) \, dy_b \, d\nu_j(E)
\end{align*}
and 
\begin{align*}
    \widetilde V_j([f \leq \alpha]) = \dfrac{\alpha^{j/d}\sigma_{j, n}}{\Gamma(1 + j/d)} \int_{G(j, n)} \prod_{b \in B_E} \int_{E \cap U_b} \exp(- \mathscr R_{E \cap U_b} f_b(y_b)) \, dy_b \, d\nu_j(E),
\end{align*}
where the active index set $B_E$ is defined by $B_E := \{ b \in \{1, \cdots, B \}: U_b \cap E \neq \{ 0 \} \}$.
From a computational standpoint, this property enables one to recover the intrinsic volumes of a high-dimensional separable model from its lower-dimensional factors, providing a concrete route for symbolic and numerical evaluation.

    \item[$\bullet$]
    (\textit{Arithmetic applications})
    A classical problem in analytic number theory is to estimate the number of lattice points inside a large domain:
    \begin{center}
        \textit{Given a compact convex set $K$, one seeks to find asymptotic expansions of the counting function
        \begin{align*}
            N_K(\alpha) := \# \big\{ m \in \mathbb Z^n ~:~ m \in \alpha K    \big\}, \quad \text{ as } \alpha \to \infty.
        \end{align*}
        }
    \end{center}
    This question goes back to Minkowski and has a long history, see e.g \cite{Cassels1959, GruberLekkerkerker1987, Huxley1996, IvicKratzelKuehleitnerNowak2006, Kratzel1989}.

    \medskip

    In recent years, several works have refined the classical discrepancy estimates by connecting lattice counting to convex-analytic invariants such as curvature and intrinsic volumes.
    Classical Lipschitz--type principles, initiated by Davenport~\cite{Davenport1951, Davenport1964Corr}, relate the lattice discrepancy 
    \begin{align*}
        \big\vert \#(K \cap \mathbb Z^n) - \vol_n(K) \big\vert
    \end{align*}
    to the measure of the boundary of $K$.
    Subsequent analytic refinements leading to higher--order estimates have been established via the use of mean--value and second--moment bounds, see \cite{Rogers1955, Rogers1956, Huxley1996}.
    Alternative approaches have also been employed to obtain Lipschitz-type discrepancy bounds, for instance through integral-geometric methods based on Wills functionals \cite{W_1973,H_1975}
    \[
    \mathsf W(K)\ :=\ \sum_{j=0}^{n} V_j(K),
    \]
    and through analytic--arithmetic techniques using Igusa integrals or height estimates \cite{ChambertLoirTschinkel2010, Widmer2010Primitive, Widmer2012Lipschitz}. 
    A parallel thread in high–dimensional geometry and information theory views (conic) intrinsic volumes (and their sums) as complexity parameters controlling phase transitions and average–case behavior of convex signal recovery, see e.g. \cite{ALMT14}. 
    More recently, the Wills functional has also been linked to metric complexity and universal coding rates, see~\cite{Mourtada25}. 

    \medskip

    Coming back to our contributions, we revisit the lattice counting problem for convex polynomial sublevel sets through the lens of the Laplace--Grassmannian representation.
    Although our estimates do not aim at sharp remainder bounds, the resulting asymptotic expansion
    \[
    N_f(\alpha)
    = \opV_n(f)\,\alpha^{n/d}
      + O_{n, f}\!\Big(\alpha^{(n-1)/d}\Big),
      \quad \text{ as } \alpha \to \infty \quad \text{ (see \eqref{operad_Vn} for definition of $\opV_n(f)$)},
    \]
    arises naturally and transparently from the intrinsic-volume structure of the sublevel sets $[f \le \alpha]$.
    This provides a simple route to uniform asymptotic behavior, requiring no delicate analytic estimates beyond the Laplace representation.
    Moreover, the same construction extends seamlessly to other arithmetic counting problems by incorporating the intrinsic and dual intrinsic volume, see Section~5.
    The approach thus offers a conceptually unified and flexible framework connecting convex polynomial and lattice enumeration. 
\end{itemize}

\textbf{Organization.}

\medskip

Section~2 reviews background and fixes notation.
In Section~3, we establish the Laplace--Grassmannian representation. 
Section~4.1 examines some structural properties of the functionals $\opV_j$ and $\widetilde \opV_j$ (see Definitions~\eqref{operad_V}--\eqref{operad_Vtil}): (strict) log--convexity and lower semicontinuity.
Sections~4.2--4.3 study variational theory associated with L\"owner--John--type result and the block–orthogonal factorization, respectively. 
Finally, Section~5 presents arithmetic applications: lattice discrepancy bounds, counts of primitive points and rational subspaces and small--scale theta asymptotics.

\section{Preliminaries}

In this manuscript, we fix two natural numbers:
\begin{align*}
    \quad d \geq 2 \quad \text{ even} \quad \text{ and } \quad n \geq 2.
\end{align*}
We work in the Euclidean space $\R^n$ with the usual inner product $x\cdot y$ and norm $\|x\| :=\sqrt{x\cdot x}$.  
The unit ball and sphere are denoted by
$
B^n := \{x \in \R^n : \|x\| \le 1\} \text{ and } \Sph^{n-1} := \partial B^n
$, respectively.
For $0 \le j \le n$, $\vol_j(\cdot)$ denotes the $j$--dimensional Lebesgue measure (on $j$--flats if $j < n$) and $ \kappa_j := \vol_j(B^j)$.  
If $K \subset \R^n$ and $E \le \R^n$ is a subspace, $K|E$ denotes the orthogonal projection of $K$ onto $E$ and $K \cap E$ its section.  
We also denote $P_E : \R^n \to E$  the orthogonal projection onto a subspace $E$. 

\medskip

For any two sets $A, B \subset \R^n$ and $\lambda \in \R$, their Minkowski sum $A + B$ and the dilation $\lambda A$ are respectively defined by
\[
A + B := \left\{ a + b : a \in A, \, b \in B \right\} \quad \text{ and  } \quad \lambda A := \left\{ \lambda a : a \in A \right\}.
\]

\medskip

We consider the space
\[
    \cH_{n, d} := \left\{ f : \R^n \to \R \, \big\vert \,  f \text{ is a positively $d$--homogeneous polynomial} \right\}
\]
and its convex subsets
\begin{align*}
	\cF_{n, d} := & ~ \{ f \in \cH_{n, d} \, : \,   f(x) > 0 \ \text{for all } x \ne 0 \} \\
	\cP_{n,d} := & ~ \left\{ f \in \cF_{n, d} \, : \, f \text{ is convex } \right\}.
\end{align*}
Here, \textit{positive $d$--homogeneity} refers to
\[
f(\lambda x) = \lambda^d f(x) \quad \text{for all } \lambda > 0,\ x \in \R^n .
\]
Given $f \in \cH_{n,d}$ and $\alpha \in \R$, the sublevel set of $f$ is defined by $[f \le \alpha] := \{ x \in \R^n : f(x) \le \alpha \}.$
Crucially, if $f \in \cP_{n,d}$, then for any $\alpha > 0$, the set $[f \le \alpha]$ is convex, compact and contains the origin in its interior.  
Furthermore, positive $d$--homogeneity yields the scaling on the sublevel set
\[
[f \le \alpha] = \alpha^{1/d} \,[f \le 1] \quad \text{ for every } \alpha > 0.
\]


\textbf{Intrinsic and dual intrinsic volumes.}

\medskip

Let $K_1, \cdots K_m$ be $m$ convex bodies in $\R^n$.
A fundamental result in convex geometry states that the map 
\[
(\lambda_1, \cdots, \lambda_m) \mapsto \vol_n(\lambda_1 K_1 + \cdots \lambda_m K_m)
\]
is a homogeneous polynomial of degree $n$ with respect to Minkowski addition.
More precisely, it holds
\[
\vol_n(\lambda_1 K_1 + \cdots + \lambda_m K_m)
= \sum_{i_1,\dots,i_n=1}^m \MV(K_{i_1},\dots,K_{i_n}) \, \lambda_{i_1}\cdots \lambda_{i_n} \quad \text{for every $\lambda_1, \cdots, \lambda_m > 0$},
\]
where the coefficients $\MV (K_{i_1},\dots,K_{i_n})$ are nonnegative and depend only on the sets $K_{i_j}$ ( $1 \le j \le n$).  
These coefficients are called the \emph{mixed volumes} of $K_{i_1},\dots,K_{i_n}$.  
They are symmetric and multilinear with respect to Minkowski addition and also satisfy
\[
\MV (K,\dots,K) = \vol_n(K).
\]

\medskip



The \emph{intrinsic volumes} $V_j(K)$ for $j \in \{ 0,1,\dots,n \}$ are defined as normalized mixed volumes:
\[
V_j(K) := \frac{1}{\kappa_{n-j}} {n \choose j}
\MV(\underbrace{K,\dots,K}_{j}, \underbrace{B^n,\dots,B^n}_{n-j}).
\]

\medskip

Another crucial characterization is provided by the so--called \emph{Kubota formula}.  
Let $G(j,n)$ denote the Grassmannian of $j$--dimensional linear subspaces of $\R^n$, equipped with the Haar measure $\nu_j$.  
Then
\begin{equation}\label{kubota}
V_j(K) = \binom{n}{j} \frac{\kappa_n}{\kappa_j \kappa_{n-j}}
\int_{\Gr{j}{n}} \vol_j(K | E) \, d\nu_j(E).
\end{equation}

Intrinsic volumes play a central role in convex geometry, valuation theory and integral geometry.
For further details, see the monograph by Schneider~\cite{Schneider}.

\medskip

The notion of \textit{$j$-th dual volumes} (also called the dual $(n - j)$th quermassintegral) can be seen as a dual theory of intrinsic volumes which was first introduced by Lutwak \cite{Lutwak_1975, Lutwak_1979, Lutwak_1988}: for a convex body $K$ containing the origin in its interior, the $j$-th dual intrinsic volume is defined by
\begin{align*}
    \widetilde V_j(K) := \dfrac{1}{n} \int_{\mathbb{S}^{n - 1}} \rho_K(u)^j du,
\end{align*}
where $\rho_K(u) := \max \{ \lambda > 0: \lambda u \in K \}$ is the radial function of $K$.
Analogously to the classical intrinsic volume, we have the Cauchy--Kubota formula for the $j$-th dual volume, expressed in terms of section volumes,
\begin{equation}\label{kubota-dual}
    \widetilde V_j(K) = \dfrac{\kappa_n}{\kappa_{n - j}} \int_{G(j, n)} \vol_j(K \cap E) d\nu_j(E).
\end{equation}

\medskip

From now on, we fix the following constants (which are normalized constants of dual and intrinsic volumes)
 \begin{align*}
        \beta_{j,n} := \binom{n}{j} \,\frac{\kappa_n}{\kappa_j\,\kappa_{n-j}} \quad \text{ and } \quad \sigma_{j, n} := \dfrac{\kappa_n}{\kappa_{n - j}}
\end{align*}

\medskip

\textbf{Lattice.}

\medskip

A \emph{lattice} $\Lambda\subset \R^n$ is a discrete additive subgroup of full rank $n$.
Equivalently, there exists a basis matrix $B=[b_1\,\cdots\,b_n]\in\R^{n\times n}$ with linearly independent columns such that
\[
  \Lambda\ =\ B\,\Z^n\ =\ \left\{ \sum_{i=1}^n m_i b_i:\ m_i\in\Z\right\}.
\]
A measurable set $F\subset\R^n$ is a \emph{fundamental domain} for $\Lambda$ if the translates $\{F+\lambda:\ \lambda\in\Lambda\}$ tile $\R^n$ with pairwise disjoint interiors and
\[
  \R^n\ =\ \bigsqcup_{\lambda\in\Lambda} (F+\lambda).
\]
A canonical choice attached to a basis $B=[b_1\,\cdots\,b_n]$ is the \emph{fundamental parallelepiped}
\[
  \mathcal F(B)\ :=\ \left\{\ \sum_{i=1}^n t_i b_i\ :\ t_i\in[0,1)\ \right\}.
\]
In particular, for the standard lattice $\Lambda=\Z^n$ one may take $F=[0,1)^n$, and $\det\Z^n=1$. 
The covolume (also called the determinant) of $\Lambda$ is
\[
  \det\Lambda\ :=\ \vol_n(\R^n/\Lambda)\ =\ \vol_n\big(\mathcal F(B)\big) = |\det B|
\]
where $\mathcal F(B)$ is any fundamental domain of $\Lambda$. 
This number is independent of the choice of basis $B$.
\section{Exponential representations of intrinsic and dual volumes}

Let $E \in G(j,n)$ be a $j$--dimensional linear subspace of $\R^n$ and let $E^\perp$ denote its orthogonal complement.  
We define two operators acting on $\cP_{n,d}$:
\begin{align*} \label{eq:PiE}
\Pi_E f : E &\longrightarrow \R, 
\qquad \Pi_E f(y) := \inf_{z \in E^\perp} f(y+z), \quad \text{ for every }y \in E,
\\
\mathscr{R}_E f : E &\longrightarrow \R, 
\qquad \mathscr{R}_E f := f|_E .
\end{align*}
Here, $\Pi_E f$ is simply obtained by minimizing over directions orthogonal to $E$  
and $\mathscr{R}_E f$ is the restriction of $f$ to $E$.

\begin{lemma}[Projection/section operators]\label{prop.pro-sec}
For any $0 \leq j \leq n - 1$, let $f\in\cP_{n,d}$ and let $E \in G(j, n)$ be a $j$--dimensional subspace of $\R^n$.
Then, the following assertions hold true:
\begin{enumerate}[label=(\roman*)]
\item $\mathscr R_E f$ is convex, $d$–homogeneous on $E$, and for every $\alpha > 0$, one has
\begin{equation}\label{sec.id}
[f\le \alpha]\ \cap E\ =\ [\mathscr R_E f\le \alpha]\quad\text{(section of sublevel set in $E$).}
\end{equation}

\item $\Pi_E f$ is convex, $d$–homogeneous on $E$, continuous and for every $\alpha > 0$, one has
\begin{equation}\label{proj.id}
[f\le \alpha]\ \big|\ E\ =\ [\Pi_E f\le \alpha]\quad\text{(projection of sublevel set in $E$).}
\end{equation}
\end{enumerate}
\end{lemma}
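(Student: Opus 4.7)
Part (i) is essentially bookkeeping: convexity and $d$-homogeneity of $\mathscr R_E f = f|_E$ are inherited from $f$ by pointwise restriction, and the section identity $[f\le\alpha]\cap E=[\mathscr R_E f\le\alpha]$ is immediate from the two equivalences ``$x\in E$ and $f(x)\le\alpha$'' $\Leftrightarrow$ ``$x\in E$ and $\mathscr R_E f(x)\le\alpha$''. I would dispense with (i) in a single short paragraph.

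The substance is in (ii). My plan is to prove things in the following order. First, I would establish $d$-homogeneity of $\Pi_E f$ by the change of variables $z=\lambda w$ in the defining infimum: for $\lambda>0$ and $y\in E$, $\Pi_E f(\lambda y)=\inf_{z\in E^\perp}f(\lambda y+z)=\inf_{w\in E^\perp}\lambda^{d}f(y+w)=\lambda^{d}\Pi_E f(y)$, using that $\lambda w$ ranges over $E^\perp$ as $w$ does. Second, I would show the infimum is attained and $\Pi_E f$ is finite--valued; here I use that $f\in\cP_{n,d}$ is positive off the origin, and $d$-homogeneous with $d\ge 2$, hence coercive, so for fixed $y\in E$ the map $z\mapsto f(y+z)$ is a continuous coercive function on the finite--dimensional space $E^\perp$ and attains its minimum. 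In particular $0\le\Pi_E f(y)\le f(y)<\infty$.

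Third, convexity of $\Pi_E f$: this is the standard fact that infimal projection (value function in a convex parametric minimization) of a jointly convex function along a linear direction is convex. Concretely, given $y_1,y_2\in E$, optimal lifts $z_1,z_2\in E^\perp$, and $t\in[0,1]$, the point $t(y_1+z_1)+(1-t)(y_2+z_2)$ projects to $ty_1+(1-t)y_2$ along $E^\perp$, and convexity of $f$ gives the desired inequality for $\Pi_E f$. Continuity on $E$ then follows from the fact that a finite convex function on a finite--dimensional Euclidean space is continuous on the interior of its (full) domain, which here is all of $E$.

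Finally, the projection identity~\eqref{proj.id}: by definition of orthogonal projection, $y\in[f\le\alpha]\,|\,E$ iff there exists $x\in[f\le\alpha]$ with $P_E x=y$, equivalently some $z\in E^\perp$ satisfies $f(y+z)\le\alpha$. Since the infimum is attained by step two, this is equivalent to $\Pi_E f(y)\le\alpha$. The only step with any real content is the coercivity argument that underwrites attainment and finiteness; everything else is a routine bookkeeping of convexity and the change of variables. I would present the attainment/coercivity step carefully and keep the remaining verifications terse.
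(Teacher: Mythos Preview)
Your proof is correct and follows essentially the same route as the paper's: both dismiss (i) as immediate, prove convexity of $\Pi_E f$ via the standard infimal--projection argument, $d$--homogeneity via the change of variables $z\mapsto\lambda z$ in $E^\perp$, and deduce continuity from convexity on a finite--dimensional space. Your explicit attainment step (via coercivity of $f$) is a welcome addition that the paper glosses over when it asserts \eqref{proj.id} ``follows from the definition of $\Pi_E f$''---attainment is what closes the inclusion $[\Pi_E f\le\alpha]\subseteq[f\le\alpha]\,\big|\,E$ at the boundary case $\Pi_E f(y)=\alpha$.
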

\begin{proof}
The proof of \textit{(i)} is directly from the definition and therefore we omit it.
\smallskip

\textit{(ii)}
The identity \eqref{proj.id} follows from the definition of $\Pi_E f$.
We now check that $\Pi_E f$ is convex and $d$--homogeneous.
On one hand, since $f$ is convex, we have
\begin{align*}
    \Pi_E f(\lambda y + (1 - \lambda) y') = & ~ \inf_{z \in E^\perp} f(\lambda y + (1 - \lambda) y' + z) \\
    \leq & ~ f(\lambda y + (1 - \lambda) y' + \lambda z + (1  - \lambda) z') \\
    \leq & ~ 
    \lambda f(y + z) + (1 - \lambda) f(y' + z'),
\end{align*}
for every $y, y' \in E$, $z, z' \in E^\perp$ and $\lambda \in (0, 1)$.
Taking the infimum w.r.t $z$ and  $z'$, we infer that $\Pi_E f$ is convex on $E$.
On the other hand, for any fixed $y \in E$ and $t > 0$, using the $d$--homogeneity of $f$ and observing that $E^\perp$ is a linear subspace, we compute
\begin{align*}
    \Pi_E f(t y) = & ~ \inf_{z \in E^\perp} f(t y + z) = \inf_{z \in E^\perp} f(t(y + z)) = t^d \inf_{z \in E^\perp} f(y + z) = t^d \Pi_E f(y).
\end{align*}
Lastly, $\Pi_E f$ is continuous since convex functions are continuous in the interior of their domains, see e.g~\cite[Proposition 1.19]{P_1989}. 
Lemma \ref{prop.pro-sec} is proven.
\end{proof}

\begin{remark}[Projection and section operators for quadratics]\label{rem:PiE-quad}
For $d = 2$, we obtain the explicit expressions for the projection and section operators, together with their integrals over $E$.
Let $Q \in \R^{n\times n}$ be symmetric positive definite and let us consider $f(x) = x^\top Q x$.
Then, $f \in \cP_{n, 2}$ and $[f \le \alpha]=\{x \in \R^n: x^\top Qx\le\alpha\}$ is the centered ellipsoid
\[
\EuScript E_\alpha(Q)\ =\ \sqrt{\alpha}\,Q^{-1/2}\,B^n
\]
with semi–axes $s_i=\sqrt{\alpha/\lambda_i}$, where $0<\lambda_1\le\cdots\le\lambda_n$ are the eigenvalues of $Q$. 
Let $x=y+z$ with $y\in E$ and $z\in E^\perp$.
Then $\Pi_E f(y)=\inf_{z\in E^\perp}(y+z)^\top Q (y+z)$ equals
\[
\Pi_E f(y)\ =\ y^\top\,\big((Q^{-1})|_E\big)^{-1}\,y,
\]
the shorted operator (Schur complement) of $Q$ to $E$. Moreover,
$\mathscr R_E f(y)=y^\top (Q|_E)\,y$. In particular
\begin{align*}
& \int_E \exp(-\Pi_E f(y))\,dy\ =\ \pi^{j/2}\,\sqrt{\det\big((Q^{-1})|_E\big)}\! \\
\text{ and } &
\int_E \exp(-\mathscr R_E f(y)) \,dy\ =\ \frac{\pi^{j/2}}{\sqrt{\det(Q|_E)}}\,.
\end{align*}
\end{remark}

The following lemma provides an integral representation to compute the volume of the sublevel set of a convex, positively $d$-homogeneous polynomial over $E \in G(j,n)$.

\begin{lemma}\label{lem.vol_j}
     Let $E \in G(j, n)$ and let $h: E \to [0, + \infty)$ be a measurable $d$--homogeneous function such that $[h \leq 1]$ is convex and compact. 
     Then, for any $\alpha > 0$, the following identity holds:
     \begin{equation}\label{exp.volume}
         \vol_j([h \leq \alpha]) = \dfrac{\alpha^{j/d}}{\Gamma(1 + j/d)} \int_E \exp(- h(y)) \, dy.
     \end{equation}
\end{lemma}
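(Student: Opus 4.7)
The plan is to reduce~\eqref{exp.volume} to the case $\alpha=1$ via the positive $d$--homogeneity of $h$, and then to evaluate $\int_E e^{-h(y)}\,dy$ by a layer-cake argument that converts it into a one-dimensional Gamma integral.

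For the reduction, the identity $h(\lambda y)=\lambda^{d} h(y)$ for $\lambda>0$ yields $[h\le \alpha]=\alpha^{1/d}[h\le 1]$, and since $E$ has dimension $j$, this scales $j$--volumes by $\alpha^{j/d}$. It therefore suffices to prove
\[
\vol_j([h\le 1])\;=\;\frac{1}{\Gamma(1+j/d)}\int_E e^{-h(y)}\,dy.
\]
Before computing, I would record a positivity observation: $h(y)>0$ for every $y\in E\setminus\{0\}$, for otherwise $h(y_0)=0$ for some $y_0\ne 0$ would, by $d$--homogeneity, force $h\equiv 0$ on the entire ray $\R_{+}y_0$, contradicting the compactness of $[h\le 1]$. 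Consequently $\vol_j([h\le 0])=\vol_j(\{0\})=0$, and the scaling $\vol_j([h\le s])=s^{j/d}\vol_j([h\le 1])$ is valid for every $s\ge 0$.

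For the main computation, I would write $e^{-h(y)}=\int_0^\infty \mathbf 1_{[h\le s]}(y)\,e^{-s}\,ds$ and apply Tonelli's theorem to obtain
\[
\int_E e^{-h(y)}\,dy\;=\;\int_0^\infty \vol_j([h\le s])\,e^{-s}\,ds\;=\;\vol_j([h\le 1])\int_0^\infty s^{j/d}\,e^{-s}\,ds,
\]
and then recognize the last integral as $\Gamma(1+j/d)$. Combined with the reduction step, this yields~\eqref{exp.volume}.

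The argument is essentially routine; the one technical point is the positivity of $h$ off the origin, which relies on the compactness of $[h\le 1]$ (convexity plays no role in this lemma, but fits the context used in Lemma~\ref{prop.pro-sec}). Measurability of each $[h\le s]$ follows from that of $h$, and the finiteness of $\vol_j([h\le s])=s^{j/d}\vol_j([h\le 1])$ for all $s\ge 0$ justifies the use of Tonelli.
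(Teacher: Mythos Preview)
Your proof is correct and follows essentially the same route as the paper's: both use the layer-cake identity $e^{-h(y)}=\int_{h(y)}^\infty e^{-s}\,ds$, Fubini/Tonelli to obtain $\int_E e^{-h}\,dy=\int_0^\infty \vol_j([h\le s])\,e^{-s}\,ds$, and then the homogeneity scaling $\vol_j([h\le s])=s^{j/d}\vol_j([h\le 1])$ to reduce to a Gamma integral. Your additional remarks on positivity of $h$ off the origin and on measurability are careful justifications that the paper leaves implicit, but there is no substantive difference in method.
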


\begin{remark}
    As we have mentioned in the introduction, the formula~\eqref{exp.volume} has been proved  by Lasserre in \cite[Theorem 2.2]{La_2015} for homogeneous polynomials.
     We provide a shorter proof below using the layer–cake formula for general homogeneous functions, which need not be polynomials.
\end{remark}

\begin{proof}[Proof of Lemma \ref{lem.vol_j}]
Applying Fubini theorem, we have 
\begin{align*}
    \int_E \exp(- h(y)) \, dy = \int_E \int_{h(y)}^\infty e^{-t} \, dt \, dy = \int_0^\infty e^{- t } \vol_j([h \leq t]) \, dt.
\end{align*}
Recall that the $d$--homogeneity of $f$ leads to $[h \leq t] = t^{1/d} [h \leq 1]$.
Hence, using the scaling via dilation of Lebesgue measure, we obtain
    \begin{align*}
        \vol_j([h \leq t]) = t^{j/d}\vol_j([h \leq 1]).
    \end{align*}
Combining the above observations, we arrive at
\begin{align*}
    \int_E \exp(-h(y)) \, dy = \vol_j([h \leq 1]) \int_0^\infty t^{j/d}e^{-t} dt =  \vol_j([h \leq 1]) \Gamma(1 + j/d).
\end{align*}
Finally, we conclude that the identity \eqref{exp.volume} holds true for every $\alpha > 0$.
\end{proof}

Now we can state and prove our main integral representations of intrinsic and dual volumes of the sublevel set of a convex, positively $d$-homogeneous polynomial.
\begin{theorem}[Laplace--Grassmannian representation]\label{thm.representation}
    Let $f \in \cP_{n, d}$ and $\alpha > 0$.
    Then, for any $1 \leq j \leq n - 1$, the following identities hold true:
    \begin{equation}\label{eq:Vj-exp}
        V_j([f\le \alpha]) = \alpha^{j/d}  \,\frac{\beta_{j,n} }{\Gamma(1+j/d)}
\int_{G(j,n)} \ \int_{E} \exp\big(-\Pi_E f(y)\big)\,dy \, d\nu_j(E)
    \end{equation}
    and 
    \begin{equation}\label{eq:dual-exp}
        \widetilde V_j([f\le \alpha])  = \alpha^{j/d} \,\frac{\sigma_{j,n}}{\Gamma(1+j/d)}
        \int_{G(j,n)} \ \int_{E} \exp\big(-\mathscr R_E f(y)\big)\,dy\, d\nu_j(E).
    \end{equation}
    Here $\nu_j$ is the (unique) Haar measure on $G(j, n)$.
\end{theorem}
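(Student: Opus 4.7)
The plan is to combine the classical Kubota-type integral formulas with the exponential volume identity of \Cref{lem.vol_j}, using the projection/section identities of \Cref{prop.pro-sec} as the bridge that rewrites sublevel sets in $\R^n$ as sublevel sets on each fiber $E \in \Gr{j}{n}$. In this way, the theorem reduces to a substitution followed by Fubini--Tonelli; the real analytic work is already carried out by the two preceding lemmas.

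For the intrinsic-volume identity \eqref{eq:Vj-exp}, I would start from Kubota's formula \eqref{kubota},
\[
V_j([f \le \alpha]) \;=\; \beta_{j,n} \int_{\Gr{j}{n}} \vol_j\big([f \le \alpha]\,\big|\,E\big)\, d\nu_j(E).
\]
By \Cref{prop.pro-sec}(ii), the projected body equals $[\Pi_E f \le \alpha] \subset E$, where $\Pi_E f$ is convex and positively $d$-homogeneous. Its unit sublevel set is the orthogonal projection of the compact convex body $[f \le 1]$ onto $E$, hence compact and convex, so \Cref{lem.vol_j} applies with $h = \Pi_E f$ and yields
\[
\vol_j\bigl([\Pi_E f \le \alpha]\bigr) \;=\; \frac{\alpha^{j/d}}{\Gamma(1+j/d)} \int_E \exp\!\bigl(-\Pi_E f(y)\bigr)\, dy.
\]
Plugging this into the Kubota integral and interchanging the order of integration via Tonelli's theorem produces \eqref{eq:Vj-exp} exactly. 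The dual identity \eqref{eq:dual-exp} is obtained by the parallel argument: begin from the dual Kubota formula \eqref{kubota-dual}, use \Cref{prop.pro-sec}(i) to identify the section $[f \le \alpha] \cap E$ with $[\mathscr R_E f \le \alpha]$ (which is compact convex as an intersection of $[f\le 1]$ with the subspace $E$), apply \Cref{lem.vol_j} fiberwise with $h = \mathscr R_E f$, and swap integrals.

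The only nontrivial technical point is the joint measurability required for Tonelli's theorem. One must check that $(E, y) \mapsto \exp(-\Pi_E f(y))$ and $(E, y) \mapsto \exp(-\mathscr R_E f(y))$ are measurable on the total space of the tautological bundle over $\Gr{j}{n}$; this follows from continuity of $f$ together with continuity of $(E, y) \mapsto \Pi_E f(y)$ and $(E, y) \mapsto \mathscr R_E f(y)$, and nonnegativity of the integrands then removes any integrability hypothesis. Beyond this measurability bookkeeping, I do not anticipate a genuine obstacle: the substantive content is packaged into \Cref{prop.pro-sec} and \Cref{lem.vol_j}, and the theorem is essentially their juxtaposition.
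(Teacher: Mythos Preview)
Your proposal is correct and follows essentially the same route as the paper: apply the Cauchy--Kubota formulas \eqref{kubota} and \eqref{kubota-dual}, rewrite the projection/section of the sublevel set via \Cref{prop.pro-sec}, and evaluate each fiber volume using \Cref{lem.vol_j}. One small remark: there is no actual interchange of order of integration---the target formulas \eqref{eq:Vj-exp} and \eqref{eq:dual-exp} are already iterated integrals in the same order as what the substitution produces, so Tonelli is not needed here (though your measurability observation is still relevant for the well-definedness of the double integral).
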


\begin{proof}
Applying Cauchy--Kubota formula to the intrinsic volume $V_j$, we have that
\begin{align*}
    V_j([f \leq \alpha]) = \beta_{j, n} \int_{G(j, n)} \vol_j([f\le \alpha]\ \big|\ E) \, d\nu_j(E).
\end{align*}
It follows from Lemma \ref{prop.pro-sec} and Lemma \ref{lem.vol_j} that
\begin{align*}
    \vol_j([f\le \alpha]\ \big|\ E) = \vol_j([\Pi_E f \leq \alpha]) =  \dfrac{\alpha^{j/d}}{\Gamma(1 + j/d)} \int_E \exp(- \Pi_E f(y)) \, dy.
\end{align*}
Combining the above observations, we obtain the identity \eqref{eq:Vj-exp}.
Analogously, using the Cauchy--Kubota formula for the $j$-th dual volume \eqref{kubota-dual} together with Lemma \ref{prop.pro-sec} and Lemma \ref{lem.vol_j}, we get the identity \eqref{eq:dual-exp}.
Theorem \ref{thm.representation} is proven.    
\end{proof}

\begin{remark}[Integrability]\label{rem:integrability}
\textit{(i)} For any $f\in \cF_{n,d}$ and any $E\in G(j,n)$, it holds
\[
  \int_E \exp(-\mathscr R_E f(y))\,dy < + \infty\quad \text{ and } \quad 
  \int_E \exp(-\Pi_E f(y))\,dy< +\infty.
\]
Indeed, thanks to Proposition~\ref{prop.cF-coer}, for any fixed $f \in \cF_{n, d}$, there exists $\varpi > 0$ such that $f(x) \geq \varpi \| x \|^d$ for every $x \in \R^n$.
Hence, we get
\begin{align*}
    \int_E \exp(- \mathscr R_E f(y)) \, dy \leq \int_E \exp( - \varpi \| y \|^d) \, dy = \frac{j }{d} \Gamma(j/d) \vol_j(B^j) \varpi^{-j/d} < + \infty
\end{align*}

Furthermore, with a direct computation, we have
\begin{equation*}
    \| y + z \|^2 = \| y \|^2 + 2 \underbrace{\langle y, z \rangle}_{\, = \, 0} + \| z \|^2 \geq \| y \|^2, \quad \text{ for every  } y \in E, \, z \in E^\perp,
\end{equation*}
which leads to $\textstyle\inf_{z \in E^\perp} \| y + z \|^d = \| y \|^d$ for every $y \in E$. 
Remark that this fact may be false if we consider other norms instead of Euclidean norm.
Therefore, we have
\[
    \Pi_E f(y) = \inf_{z \in E^\perp} f(y + z) \geq \varpi \inf_{z \in E^\perp} \| y + z \|^d = \varpi \| y \|^d,
\]
which leads to
\begin{align*}
    \int_E \exp( - \Pi_E f(y)) \, dy \leq \int_E \exp(- \varpi \| y \|^d) \, dy < +\infty.
\end{align*}

\medskip

\textit{(ii)}
As a consequence of \textit{(i)}, since $\nu_j$ is a probability measure on $G(j,n)$, we also have, for every $f \in \cF_{n, d}$,
\begin{align*}
	& \int_{G(j,n)} \ \int_{E} \exp\big(-\Pi_E f(y)\big)\,dy \, d\nu_j(E) < + \infty \\
    \quad \text{ and }   & \int_{G(j,n)} \ \int_{E} \exp\big(-\mathscr R_E f(y)\big)\,dy \, d\nu_j(E) < + \infty.
\end{align*}
\end{remark}


\begin{remark}
Let $K_i=[f_i\le1]$ with $f_i\in\cP_{n,d}$. It is natural to ask whether the \emph{multilinear} mixed volume $\MV(K_1,\dots,K_n)$ admits a comparable \emph{single} exponential integral formula built from $\{f_i\}$. 
In general, it does not.
For a single $f$, $K=[f\le1]$ is a convex sublevel set and $\exp(-f)$ gives the Laplace transform of its (projected/sectional) volumes.
But for distinct $f_i$, the Minkowski sum \(\textstyle\sum_i \lambda_i K_i\) is not of the form $[g\le1]$ for any homogeneous convex polynomial $g$. 
Even in the quadratic case ($d=2$), take centered ellipsoids
\[
\EuScript E(Q):=\{x \in \R^n:\ x^\top Q^{-1}x\le1\}, \quad \text{ whose support function is } h_{\EuScript E(Q)}(u)=\sqrt{u^\top Q\,u}.
\]
Then, one has
\[
h_{\EuScript E(Q_1)+ \EuScript E(Q_2)}(u) = h_{\EuScript E(Q_1)}(u) + h_{\EuScript E(Q_2)}(u)
= \sqrt{u^\top Q_1 u}\,+\,\sqrt{u^\top Q_2 u},
\]
which is the support function of an ellipse only when $Q_1$ and $Q_2$ are homothetic. 
Thus there is no single quadratic $g$ with $[g\le1] = \EuScript E (Q_1) + \EuScript E(Q_2)$.
Without such closure, one cannot hope for a single $'' \exp(-g) ''$-integral that polarizes to the mixed volume coefficients.
\end{remark}

\section{Applications to polynomial optimization}
\subsection{Structural properties of intrinsic/dual volumes of sublevel sets}

\subsubsection*{Log--convexity and lower semicontinuity}

For $1 \leq j \leq n - 1$, let us define respectively the functionals $\opV_j, : \cH_{n, d} \to [0, + \infty]$ and $\widetilde \opV_j : \cH_{n, d} \to [0, + \infty]$ by
\begin{equation}\label{operad_V}
    \qquad\,\,\,\, \opV_j(f) :=  ~
    \begin{dcases}
    \frac{\beta_{j,n} }{\Gamma(1+j/d)}
\int_{G(j,n)} \ \int_{E} \exp\big(-\Pi_E f(y)\big)\,dy \, d\nu_j(E), & f \in \cF_{n, d} \\
    +\infty, & \text{ otherwise }
    \end{dcases},
\end{equation}

\begin{equation}\label{operad_Vtil}
    \text{ and } \quad  \widetilde \opV_j(f) :=  ~ 
    \begin{dcases}
        \frac{\sigma_{j,n}}{\Gamma(1+j/d)}
        \int_{G(j,n)} \ \int_{E} \exp\big(-\mathscr R_E f(y)\big)\,dy \, d\nu_j(E), & f \in \cF_{n, d} \\
        + \infty, & \text{ otherwise }
    \end{dcases}        .
\end{equation}
In the case $j = n$, the above functionals coincide and are simply the volume of the 1--sublevel set of $f$, which is studied by Lasserre~\cite{La_2015}.
More precisely, they are defined by
\begin{equation}\label{operad_Vn}
    \opV_n(f) = \widetilde\opV_n(f) := 
    \begin{dcases}
        \dfrac{1}{\Gamma(1 + n/d)} \int_{\R^n} \exp(- f(x)) dx, & f \in \cF_{n, d}, \\
        + \infty, & \text{ otherwise}
    \end{dcases}.
\end{equation}
In the case $j = 0$, for every $f \in \cP_{n, d}$, we have that $\opV_0(f) = V_0([f \leq 1]) = 1$ (which is in fact the Euler characteristic) and $\widetilde\opV_0(f) = \widetilde V_0([f \leq 1]) = \vol_{n - 1}(\mathbb S^{n - 1})/n$, which are constants.
This case trivializes many subsequent observations and hence, we will not include it in our analysis.

\medskip

Thanks to Remark~\ref{rem:integrability}, we have $\mathrm{dom}~\opV_j = \cF_{n, d}$ and $\mathrm{dom}~\widetilde\opV_j = \cF_{n, d}$.
Moreover, in what follows, we equip the space $\cH_{n, d}$ with the topology of uniform convergence on the unit sphere, which is equivalent to any other norm on $\cH_{n, d}$ since it is a finite-dimensional space.


%

\begin{corollary}[Log–convexity]\label{cor:logconvex}
For any fixed $1 \leq j \leq n$, it holds, for every $ f, g \in \cF_{n, d}$  and  $\lambda \in (0, 1)$,
\[
\begin{aligned}
\opV_j(\lambda f+(1-\lambda)g)
&\ \le\ \opV_j(f)^\lambda\,\opV_j(g)^{1-\lambda},\\[1mm]
\widetilde \opV_j(\lambda f+(1-\lambda)g)
&\ \le\ \widetilde \opV_j(f)^\lambda\,\widetilde \opV_j(g)^{1-\lambda}.
\end{aligned}
\]
\end{corollary}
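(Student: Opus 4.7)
The plan is to derive log-convexity fiberwise and then lift it to the Grassmannian using H\"older twice, exploiting the explicit exponential representations \eqref{eq:Vj-exp}--\eqref{eq:dual-exp}.

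First, I would check that $\cF_{n,d}$ is convex (it is, since $d$-homogeneity and positivity away from the origin are preserved by convex combinations), so that $h_\lambda := \lambda f + (1-\lambda)g \in \cF_{n,d}$, which ensures all integrals are finite by Remark~\ref{rem:integrability}. The dual-volume case $\widetilde\opV_j$ is easier: restriction is linear, so
\[
\mathscr R_E h_\lambda \;=\; \lambda\,\mathscr R_E f \;+\; (1-\lambda)\,\mathscr R_E g.
\]
Thus the fiber integrand factors as $\exp(-\mathscr R_E h_\lambda) = \exp(-\mathscr R_E f)^{\lambda}\exp(-\mathscr R_E g)^{1-\lambda}$, and H\"older on $(E,\,\vol_j)$ with exponents $1/\lambda$ and $1/(1-\lambda)$ gives
\[
\int_E e^{-\mathscr R_E h_\lambda(y)}\,dy \;\le\; \Big(\!\int_E e^{-\mathscr R_E f(y)}\,dy\Big)^{\!\lambda}\!\Big(\!\int_E e^{-\mathscr R_E g(y)}\,dy\Big)^{\!1-\lambda}.
\]

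For the projection case, the key observation is that the infimal projection is \emph{superadditive} in the function argument: for every $y\in E$,
\[
\Pi_E h_\lambda(y) \;=\; \inf_{z\in E^\perp}\bigl(\lambda f(y+z)+(1-\lambda)g(y+z)\bigr) \;\ge\; \lambda\,\Pi_E f(y) + (1-\lambda)\,\Pi_E g(y),
\]
since separate infima dominate a joint infimum over the common variable $z$. Exponentiating reverses the inequality, yielding the pointwise bound $e^{-\Pi_E h_\lambda(y)} \le e^{-\Pi_E f(y)\,\lambda}\,e^{-\Pi_E g(y)\,(1-\lambda)}$, and the same H\"older argument on $(E,\vol_j)$ produces the analogous fiber estimate.

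Finally, I would apply H\"older a second time on $(G(j,n),\nu_j)$. Setting $A(E):=\int_E e^{-\Pi_E f}\,dy$ and $B(E):=\int_E e^{-\Pi_E g}\,dy$, the fiber bound and H\"older (again with exponents $1/\lambda,\,1/(1-\lambda)$) give
\[
\int_{G(j,n)} A(E)^{\lambda}B(E)^{1-\lambda}\,d\nu_j(E) \;\le\; \Big(\!\int_{G(j,n)}\! A(E)\,d\nu_j(E)\Big)^{\!\lambda}\!\Big(\!\int_{G(j,n)}\! B(E)\,d\nu_j(E)\Big)^{\!1-\lambda}.
\]
Multiplying by the normalization $\beta_{j,n}/\Gamma(1+j/d)$ (raised to $\lambda+(1-\lambda)=1$) yields $\opV_j(h_\lambda)\le \opV_j(f)^\lambda\opV_j(g)^{1-\lambda}$, and the identical chain with $\sigma_{j,n}$ and $\mathscr R_E$ gives the bound for $\widetilde\opV_j$. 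The case $j=n$ is already recorded in \cite{La_2015}. The only conceptually delicate point is getting the inequality direction right for $\Pi_E$; once superadditivity is in place, the rest is two clean applications of H\"older.
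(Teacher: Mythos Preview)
Your proposal is correct and follows essentially the same route as the paper: linearity of $\mathscr R_E$ (respectively concavity of $\Pi_E$ in the function argument) gives a pointwise bound on the fiber integrand, then H\"older is applied first on each fiber $E$ and second over the Grassmannian $G(j,n)$. The only cosmetic difference is that you additionally remark on the convexity of $\cF_{n,d}$ and phrase the $\Pi_E$ step as ``superadditivity'' rather than ``concavity''; note, however, that your verbal justification ``separate infima dominate a joint infimum'' has the direction backwards (the joint infimum dominates the sum of separate infima), though the displayed inequality itself is correct.
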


\begin{proof}
Set $h := (1-\lambda)f+\lambda g$. 
We only prove the inequalities for $1 \leq j \leq n - 1$ and the case $j = n$ follows in the same manner.

\smallskip

\textit{Part 1.} \emph{Log--convex inequality for $\widetilde\opV_j$.} 
For each $E \in \Gr{j}{n}$, since we have $\mathscr R_E h=(1-\lambda)\mathscr R_E f+\lambda\mathscr R_E g$, it holds
\[
e^{-\mathscr R_E h}=(e^{-\mathscr R_E f})^{1-\lambda}(e^{-\mathscr R_E g})^\lambda.
\]
Applying Hölder inequality on $E$ with exponents $p=\frac{1}{1-\lambda}$ and $q=\frac{1}{\lambda}$ gives
\[
\int_E \exp(-\mathscr R_E h(y)) \, dy \le \left(\int_E \exp(-\mathscr R_E f(y)) \, dy \right)^{1-\lambda}
\left(\int_E \exp(-\mathscr R_E g(y)) \, dy \right)^{\lambda}.
\]
Next, applying Hölder inequality again on the Grassmannian $\Gr{j}{n}$ with the same exponents to the functions
\[
    F: E \mapsto \int_E \exp(-\mathscr R_E f(y)) \, dy 
    \quad \text{ and } \quad
    G: E \mapsto \int_E \exp(-\mathscr R_E g(y)) \, dy,
\]
we obtain that
\[
\int_{G(j,n)}\!\int_E \exp(-\mathscr R_E h(y))\, dy \, d\nu_j(E)\ \le\
\Big(\int_{G(j,n)} F(E) \, d\nu_j(E)\Big)^{1-\lambda}
\Big(\int_{G(j,n)} G(E) \, d\nu_j(E)\Big)^{\lambda}.
\]
Then, the representation formula in Theorem~\ref{thm.representation} yields the desired inequality for $\widetilde \opV_j$.

\medskip

\textit{Part 2.} \emph{Log--convex inequality for $\opV_j$.} For each $E \in G(j, n)$, note that the projection operator $\Pi_E$ is concave in terms of $f$:
\[
\Pi_E \, h(y)\ \ge\ (1-\lambda)\Pi_E \, f(y) + \lambda\Pi_E \,  g(y), \quad \text{ for every } y \in E.
\]
Since $r\mapsto e^{-r}$ is decreasing in $[0, + \infty)$, we infer that
\[
e^{-\Pi_E h(y)}\ \le\ \big( e^{-\Pi_E f(y)} \big)^{1-\lambda} \big(e^{-\Pi_E g(y)} \big)^\lambda, \quad \text{ for every } y \in E.
\]
The remaining proof follows similarly as in Part 1.
\end{proof}

\begin{corollary}[Lower semicontinuity]\label{cor.lsc}
    For any fixed $1 \leq j \leq n$, the functionals $\opV_j$ and $\widetilde \opV_j$ are lower semicontinuous on their domains.
\end{corollary}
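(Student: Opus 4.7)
I would apply Fatou's lemma directly to the Laplace--Grassmannian representations from Theorem~\ref{thm.representation}. The three functionals $\opV_j$, $\widetilde\opV_j$ (for $1 \le j \le n-1$) and $\opV_n = \widetilde\opV_n$ all share the form ``$\int G_f\, d\mu$'' with a nonnegative integrand, so the argument reduces to verifying that $f \mapsto G_f$ is pointwise lower semicontinuous in the appropriate sense.

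Fix $f \in \cF_{n,d}$ and a sequence $(f_k)_k \subset \cF_{n,d}$ with $f_k \to f$ in $\cH_{n,d}$, i.e.\ uniformly on $\Sph^{n-1}$. The $d$--homogeneity bound
\[
|f_k(x) - f(x)|\ =\ \|x\|^d\, \bigl|f_k(x/\|x\|) - f(x/\|x\|)\bigr|\ \le\ \|x\|^d\, \sup_{u \in \Sph^{n-1}} |f_k(u) - f(u)|
\]
upgrades this to uniform convergence on compact subsets of $\R^n$, hence pointwise everywhere. For the section operator this immediately gives $\mathscr R_E f_k(y) = f_k(y) \to f(y) = \mathscr R_E f(y)$ for every $E \in G(j,n)$ and $y \in E$. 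For the projection operator, fix $E$ and $y \in E$; for each $z \in E^\perp$ one has $\Pi_E f_k(y) \le f_k(y+z) \to f(y+z)$, so taking $\limsup$ in $k$ and then infimum over $z$ produces
\[
\limsup_{k \to \infty} \Pi_E f_k(y)\ \le\ \Pi_E f(y).
\]
Since $t \mapsto e^{-t}$ is decreasing and continuous, this yields $\liminf_k e^{-\Pi_E f_k(y)} \ge e^{-\Pi_E f(y)}$ pointwise, together with the honest pointwise limit for $\mathscr R_E$.

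Two successive applications of Fatou's lemma --- first on the fiber $E$ with Lebesgue measure, then on $G(j,n)$ with the Haar probability $\nu_j$ --- are both legitimate because the integrands are nonnegative and measurable. They deliver
\[
\liminf_{k \to \infty} \int_{G(j,n)} \int_E e^{-\Pi_E f_k(y)}\, dy\, d\nu_j(E)\ \ge\ \int_{G(j,n)} \int_E e^{-\Pi_E f(y)}\, dy\, d\nu_j(E),
\]
and the analogous inequality for $\mathscr R_E$. Rescaling by $\beta_{j,n}/\Gamma(1+j/d)$ and $\sigma_{j,n}/\Gamma(1+j/d)$ handles $1 \le j \le n-1$, while $j = n$ follows from a single Fatou on $\int_{\R^n} e^{-f_k(x)}\, dx$.

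\textbf{Main obstacle.} There is no real technical obstruction, only a conceptual point to highlight: the map $f \mapsto \Pi_E f(y)$ is only \emph{upper} semicontinuous (it is a pointwise infimum of a jointly continuous function in $(f,z)$), which is exactly what is required so that $f \mapsto e^{-\Pi_E f(y)}$ becomes \emph{lower} semicontinuous. One cannot in general upgrade this to continuity via dominated convergence, since the coercivity constant $\varpi$ from Proposition~\ref{prop.cF-coer} need not be uniform along the sequence. The Fatou route sidesteps this and yields exactly the claimed LSC, no more and no less.
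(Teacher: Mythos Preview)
Your proposal is correct and follows essentially the same route as the paper: both arguments establish the upper semicontinuity of $f\mapsto \Pi_E f(y)$ (equivalently the limsup inequality $\limsup_k \Pi_E f_k(y)\le \Pi_E f(y)$) via the pointwise convergence $f_k\to f$, then feed this into Fatou's lemma on the Laplace--Grassmannian integral. The only cosmetic differences are that the paper cites \cite{La_2015} for $j=n$ and phrases Fatou as a single step on the double integral, whereas you give the $j=n$ case directly and split Fatou into fiber and Grassmannian layers; your ``main obstacle'' remark about why one cannot upgrade to continuity is accurate and worth keeping.
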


\begin{proof}
The case $j = n$ has been proved in \cite{La_2015}.
Let us fix $1 \leq j \leq n - 1$.
By analogy, it suffices to verify the lower semicontinuity for the functional $\opV_j$.
Let $\{ f_n \} \subset \cF_{n, d}$ and $f \in \cF_{n, d}$ be such that $f_n \to f$ as $n \to \infty$.
By homogeneity, we have $f_n(x) \to f(x)$ as $n \to \infty$ for every $x \in \R^n$.
To proceed, we now show that
\begin{equation}\label{limsup.PiE}
    \Pi_E f(y) \geq \limsup_{n \to \infty} \Pi_E f_n(y),  \quad \text{ for every } E \in \Gr{j}{n} \text{ and } y \in E.
\end{equation}
Fix $y \in E$.
One can then find a subsequence $\{ n_k \}$ such that  $\textstyle\limsup_{n \to \infty} \Pi_E f_n(y) = \textstyle\lim_{k \to \infty} \Pi_E f_{n_k}(y)$.
For any $\varepsilon > 0$, by definition of the operator $\Pi_E$, there exists $z_\varepsilon \in E^\perp$ satisfying
\[
    \Pi_E f(y) + \varepsilon > f(y + z_\varepsilon) = \lim_{k \to \infty} \underbrace{f_{n_k}(y + z_\varepsilon)}_{\, \geq \, \Pi_E f_{n_k}(y)} \geq  \limsup_{n \to \infty} \Pi_E f_n(y).
\]
Since $\varepsilon > 0$ is arbitrary, the limsup inequality \eqref{limsup.PiE} follows.
Combining the representation in Theorem~\ref{thm.representation} and Fatou's lemma, we obtain the lower semicontinuity of $\opV_j$:
\begin{align*}
    \liminf_{n \to \infty} \opV_j(f_n)  = & ~ \dfrac{\beta_{j, n}}{\Gamma(1 + j/d)} \liminf_{n \to \infty} \int_{\Gr{j}{n}}\int_E \exp(- \Pi_E f_n(y)) \, dy \, d\nu_j(E) \\
    \geq & ~ \dfrac{\beta_{j, n}}{\Gamma(1 + j/d)}  \int_{\Gr{j}{n}}\int_E \liminf_{n \to \infty} \exp(- \Pi_E f_n(y)) \, dy \, d\nu_j(E) \\
    \geq & ~ 
    \dfrac{\beta_{j, n}}{\Gamma(1 + j/d)}  \int_{\Gr{j}{n}}\int_E \exp(- \Pi_E f(y)) \, dy \, d\nu_j(E) = \opV_j(f),
\end{align*}
where we have used the limsup inequality~\eqref{limsup.PiE} in the last estimate. 
Corollary~\ref{cor.lsc} is proven.
\end{proof}

\subsubsection*{Strict log-convexity through a disintegration argument}
In what follows, we are confirming the strict log--convexity of the functionals $\opV_j$ and $\widetilde \opV_j$, which is sharper than Corollary \ref{cor:logconvex}.
To do so, we use a \textit{disintegration argument} over the double fibration:
\[
G(j,n)\ \xleftarrow{\ \pi_1\ }\ \mathcal I_j\ \xrightarrow{\ \pi_2\ }\ \Sph^{n-1},
\]
where $\mathcal I_j$ is the incidence manifold, see, e.g., \cite[Chapters ~7, 13]{SW_2008} for the integral-geometric framework and \cite[Chapter~10]{Bogachev_2007} for the general disintegration theorem.

\medskip

We now describe the construction precisely.
Recall that $\nu_j$ is the Haar measure on $G(j, n)$ and let $\sigma$ be the surface area on $\Sph^{n - 1}$.
Let
\[
  \mathcal I_j\ :=\ \{(E, e)\in G(j,n)\times \Sph^{n-1} ~:~ e\in E\}
\]
be the incidence manifold, with the canonical projections
\[
  \pi_1:\mathcal I_j\to G(j,n),\quad \pi_1(E, e) = E
  \quad \text{ and } \quad
  \pi_2:\mathcal I_j\to \Sph^{n-1},\quad \pi_2(E, e) = e.
\]
We view $\mathcal I_j$ as a measurable subset of the product space $G(j,n)\times \Sph^{n-1}$ and
use the product $\sigma$--algebra throughout.
With these constructions, one obtains a Crofton--type disintegration formula as follows.
Let us begin with the $1$-dimensional case.

\begin{lemma}[Incidence disintegration: the case \(j=1\)]\label{lem:incidence-1}
For each $E \in G(1, n)$, denote $\sigma_E$ the counting measure on the set $E \cap \mathbb S^{n - 1} = \{ \pm e_E \}$.
Set
\[
    c_{1, n} := \dfrac{2}{n \kappa_n}.
\]
Then, for every nonnegative Borel function
\(\varphi:G(1,n)\times\mathbb S^{n-1}\to[0,\infty)\), it holds
\begin{equation}\label{eq:j1-functional}
\int_{G(1,n)} \;\int_{\mathbb S^{n-1}\cap E} \varphi(E, e)\,d\sigma_E(e)\,d\nu_1(E)
\;=\;
c_{1,n}\,\int_{\mathbb S^{n-1}} \varphi\bigl(\operatorname{span}(e),e\bigr)\,d\sigma(e).
\end{equation}
In particular, for every Borel set \(A\subset\mathbb S^{n-1}\), it holds
\begin{equation}\label{eq:j1-incidence}
\int_{G(1,n)} \sigma_E\bigl(A\cap E \bigr)\,d\nu_1(E)
= c_{1,n}\,\sigma(A).
\end{equation}
\end{lemma}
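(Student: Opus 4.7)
The key observation is that $G(1,n)$ is the antipodal quotient of the sphere: the map $q:\Sph^{n-1}\to G(1,n)$ defined by $q(e) = \operatorname{span}(e)$ is a surjective 2-to-1 projection whose fiber over $E$ is precisely $\Sph^{n-1}\cap E = \{\pm e_E\}$. The plan is to transfer the claimed disintegration identity on $G(1,n)$ to a (trivial) statement on $\Sph^{n-1}$ via this quotient map.

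First, I would identify $\nu_1$ with a pushforward of $\sigma$. Since $\sigma(\Sph^{n-1}) = n\kappa_n$, the normalized measure $\sigma / (n\kappa_n)$ is an $O(n)$-invariant probability measure on $\Sph^{n-1}$. Its pushforward $q_*\bigl(\sigma/(n\kappa_n)\bigr)$ is then a probability measure on $G(1,n)$ invariant under the induced $O(n)$-action. By uniqueness of the Haar probability measure on a compact homogeneous space, this pushforward must coincide with $\nu_1$. In other words, for every nonnegative Borel $\Phi$ on $G(1,n)$,
\[
    \int_{G(1,n)} \Phi(E)\,d\nu_1(E) \;=\; \frac{1}{n\kappa_n}\int_{\Sph^{n-1}} \Phi(\operatorname{span}(e))\,d\sigma(e).
\]

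Second, I would rewrite the left-hand side of \eqref{eq:j1-functional} in a form to which this identity applies. For each $E\in G(1,n)$, fix any unit vector $e_E\in E$; the inner integral against the counting measure $\sigma_E$ equals $\varphi(E,e_E) + \varphi(E,-e_E)$, and this sum is independent of the ambiguous choice of $e_E$. Applying the pushforward identity to the Borel function $\Phi(E) := \varphi(E,e_E)+\varphi(E,-e_E)$, I obtain
\[
    \int_{G(1,n)} \!\int_{\Sph^{n-1}\cap E}\!\!\varphi(E,e)\,d\sigma_E(e)\,d\nu_1(E)
    \;=\; \frac{1}{n\kappa_n}\int_{\Sph^{n-1}}\!\!\bigl(\varphi(\operatorname{span}(e),e)+\varphi(\operatorname{span}(e),-e)\bigr)\,d\sigma(e).
\]

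Third, I would collapse the two summands using antipodal symmetry. The antipodal map $e\mapsto -e$ preserves $\sigma$ and satisfies $\operatorname{span}(-e) = \operatorname{span}(e)$; hence the change of variables $e'=-e$ shows that the two integrals on the right are equal. This produces exactly the factor $2$ appearing in $c_{1,n} = 2/(n\kappa_n)$, giving \eqref{eq:j1-functional}. The particular case \eqref{eq:j1-incidence} follows by specializing to $\varphi(E,e) = \mathbf{1}_A(e)$.

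The only subtle point is the pushforward identification in the first step; however, this reduces to the standard uniqueness of the Haar probability measure on the compact homogeneous space $G(1,n)\simeq\mathbb{RP}^{n-1}$, together with $O(n)$-equivariance of $q$, and is therefore not a serious obstacle. The remainder of the argument is a one-line change of variables plus an antipodal symmetry.
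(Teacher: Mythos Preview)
Your proof is correct. Both your argument and the paper's hinge on the uniqueness of the invariant probability on a compact homogeneous $O(n)$-space, but they invoke it on different spaces. The paper works on the incidence manifold $\mathcal I_1=\{(E,e):e\in E\}$: it builds two $O(n)$-invariant measures there---one from the left-hand side of \eqref{eq:j1-functional}, the other as the pushforward of $\sigma$ under $e\mapsto(\operatorname{span}(e),e)$---and uses transitivity of the $O(n)$-action on $\mathcal I_1$ to conclude they are proportional, then computes the constant by evaluating total mass. You instead work one level down, on $G(1,n)\cong\mathbb{RP}^{n-1}$: you identify $\nu_1$ as the pushforward of $\sigma/(n\kappa_n)$ under the antipodal quotient $q$, then unfold the left-hand side through $q$ and collapse the two antipodal terms by symmetry. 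Your route is more elementary and exploits directly the double-cover structure specific to $j=1$; the paper's route, while slightly heavier here, is deliberately set up to parallel the general $j\ge2$ case treated in the next lemma, where the incidence-manifold viewpoint and disintegration become essential. One minor point you leave implicit is the Borel measurability of $\Phi(E)=\varphi(E,e_E)+\varphi(E,-e_E)$; this is immediate once you note that $\Phi\circ q$ is Borel on $\Sph^{n-1}$ and $q$ is an open continuous surjection.
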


\begin{proof}
Recall that $\mathcal I_1 = \left\{ (E,u)\in G(1,n)\times \mathbb S^{n-1} ~:~ u\in E \right\}$ is the incidence set.
Let $\mathcal H^0$ be the Hausdorff measure of dimension zero.
Define a finite Borel measure \(\rho\) on \(\mathcal I_1 \) by
\[
\rho(B) \,:=\, \int_{G(1,n)} \mathcal H^{0}\!\bigl(\{u\in \mathbb S^{n-1}\cap E:(E,u)\in B\}\bigr)\,d\nu_1(E), \quad \text{ for every Borel set } B \subset \mathcal I_1
\]
By construction, \(\rho\) is \(O(n)\)-invariant for the diagonal action on \(\mathcal I_1\).
More precisely, for every Borel set $B$ and every ${R \in O(n)}$, it holds $\rho(RB) = \rho(B)$, where $R$ acts on the pair $G(1, n) \times \mathbb S^{n - 1}$ by $R(E, u) = (RE, Ru)$, i.e. rotates the subspace 
$E$ and the vector $u$ simultaneously.
Consider also the \(O(n)\)-equivariant map
\begin{align*}
\iota:\mathbb S^{n-1} & \to\mathcal I_1, \\
 e & \mapsto \iota(e) = (\operatorname{span}(e),e).
\end{align*}
Define the pushforward measure
\[
\tilde\rho(B)\;:=\;\sigma\bigl(\iota^{-1}(B)\bigr)\;=\;\int_{\mathbb S^{n-1}}\mathbf 1_{B}\bigl(\operatorname{span}(e),e\bigr)\,d\sigma(e),
\quad \text{ for every Borel set } B \subset \mathcal I_1.
\]
Consequently, we infer that \(\tilde\rho\) is also a finite \(O(n)\)-invariant measure on \(\mathcal I_1 \).

\medskip

Since \(O(n)\) acts transitively on \( \mathcal I_1\), the space of invariant finite measures on \( \mathcal I_1 \) is one–dimensional.
Hence there exists \(c>0\) with
\(\rho=c\,\tilde\rho\).
Evaluating on \(\mathcal I_1 \) gives
\[
\rho(\mathcal I_1) \,=\, \int_{G(1,n)} \mathcal H^{0}\!\bigl(\mathbb S^{n-1}\cap E\bigr)\,d\nu_1(E)
\;=\; \int_{G(1,n)} 2\,d\nu_1(E) \;=\; 2,
\]
whereas \(\tilde\rho(\mathcal I_1 ) = \sigma(\mathbb S^{n-1})=n\,\kappa_n\). 
This verifies the choice of normalized constant  \(c = c_{1, n} =2/(n\,\kappa_n) \).

\medskip

Under the identity $\rho = c_{1, n} \widetilde \rho$, for any nonnegative Borel \(\varphi\) we obtain by definition of \(\rho\) and \(\tilde\rho\)
\[
\int_{\mathcal I_1} \varphi(E,u)\,d\rho(E,u)
\, = \,
c_{1,n}\,\int_{\mathcal I_1}\varphi(E,u)\,d\tilde\rho(E,u)
\, = \,
c_{1,n}\,\int_{\mathbb S^{n-1}} \varphi\bigl(\operatorname{span}(e),e\bigr)\,d\sigma(e),
\]
which is precisely \eqref{eq:j1-functional}. 
Notice that $\sigma_E$ is simply the restriction of $\mathcal H^0$, i.e, $\sigma_E = \mathcal H^0 \resmes (\mathbb S^{n - 1} \cap E)$.
Finally, for any Borel set $A \subset \mathbb S^{n - 1}$, taking \(\varphi(E, e)=\mathbf 1_A( e )\) yields \eqref{eq:j1-incidence}. This completes the proof.
\end{proof}

Generalizing above strategy, we obtain a disintegration argument for all $2 \leq j \leq n - 1$.

\begin{lemma}[Incidence disintegration: the case $2 \leq j \leq  n  - 1$]\label{lem:incidence}
Let $2 \leq j \leq  n - 1$.
For $E\in G(j,n)$ write $\sigma_E$ for the $(j-1)$–dimensional surface measure on the subsphere $\Sph^{n-1}\cap E$. 
Then, there exist a constant $c_{j,n}>0$ and, for $\sigma$–a.e.\ $e\in \Sph^{n-1}$, a Borel probability measure $\mu_e$ supported on the fiber
\[
\mathcal F_e\ :=\ \{E\in G(j,n):\ e\in E\}\ \cong\ G(j-1,n-1),
\]
such that for every nonnegative Borel $\varphi:\mathcal I_j\to\R$,
\begin{equation}\label{eq:incidence-disintegration}
  \int_{G(j,n)}\ \int_{\Sph^{n-1}\cap E}\!\varphi(E,e)\,d\sigma_E(e)\,d\nu_j(E)
  \ =\
  c_{j,n}\ \int_{\Sph^{n-1}}\ \int_{\mathcal F_e}\! \varphi(E,e)\,d\mu_e(E)\,d\sigma(e).
\end{equation}
Moreover, it holds
\begin{equation}\label{eq:cjn}
  c_{j,n}\ =\ \frac{j\,\kappa_j}{n\,\kappa_n}\,,
\end{equation}
and 
$\mu_e$ can be chosen as the unique $SO(n-1)$–invariant probability on $G(j-1,n-1)$.
Consequently, for every Borel set $A\subset \Sph^{n-1}$,
\begin{equation}\label{eq:incidence-indicator}
 \int_{G(j,n)} \sigma_E(A\cap E)\,d\nu_j(E)\ =\ c_{j,n}\,\sigma(A).
\end{equation}
\end{lemma}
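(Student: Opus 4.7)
My plan is to mirror the invariant-measure argument of Lemma~\ref{lem:incidence-1} while handling the positive-dimensional fibers via an equivariant disintegration. First I would define a finite Borel measure $\rho$ on the incidence manifold by
\[
\rho(B) := \int_{G(j,n)} \sigma_E\bigl(\{e\in \Sph^{n-1}\cap E:(E,e)\in B\}\bigr)\,d\nu_j(E),
\]
so that the left-hand side of \eqref{eq:incidence-disintegration} reads $\int_{\mathcal I_j}\varphi\,d\rho$. Combining the $O(n)$-invariance of $\nu_j$ with the covariance $R_*\sigma_E=\sigma_{RE}$ of the spherical surface measures under rotations shows that $\rho$ is invariant under the diagonal action $(E,e)\mapsto(RE,Re)$ of $O(n)$ on $\mathcal I_j$. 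Its total mass is $\rho(\mathcal I_j)=\sigma(\Sph^{j-1})=j\kappa_j$, since each inner integral $\sigma_E(\Sph^{n-1}\cap E)$ equals the surface area of a unit $(j-1)$-sphere and $\nu_j$ is a probability measure.

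Next I would disintegrate $\rho$ along the second projection $\pi_2:\mathcal I_j\to\Sph^{n-1}$ via the standard disintegration theorem (see \cite[Chapter~10]{Bogachev_2007}), producing a finite Borel measure $\mu:=(\pi_2)_*\rho$ on $\Sph^{n-1}$ together with a measurable family of Borel probabilities $\{\mu_e\}_{e\in\Sph^{n-1}}$ supported on the fibers $\mathcal F_e$. The $O(n)$-invariance of $\rho$ forces $\mu$ to be $O(n)$-invariant on $\Sph^{n-1}$, hence $\mu=c_{j,n}\,\sigma$ for some positive constant. Using the essential uniqueness of the disintegration, I would arrange $\mu_{Re}=R_*\mu_e$ for every $R\in O(n)$ and $\sigma$-almost every $e$, which forces each $\mu_e$ to be invariant under the stabilizer of $e$ in $O(n)$. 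Under the canonical $O(n-1)$-equivariant bijection
\[
\mathcal F_e\ \longleftrightarrow\ G(j-1,n-1),\qquad E\ \longmapsto\ E\cap e^{\perp},
\]
this stabilizer acts transitively through the standard representation, and the unique $SO(n-1)$-invariant probability on $G(j-1,n-1)$ is the Haar measure $\nu_{j-1}$, which pins down $\mu_e$.

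To compute the constant and deduce \eqref{eq:incidence-indicator}, I would test \eqref{eq:incidence-disintegration} with $\varphi\equiv 1$: the left-hand side gives $\rho(\mathcal I_j)=j\kappa_j$, while the right-hand side equals $c_{j,n}\,\sigma(\Sph^{n-1})=c_{j,n}\,n\kappa_n$ since each $\mu_e$ is a probability measure. Solving yields $c_{j,n}=j\kappa_j/(n\kappa_n)$, that is \eqref{eq:cjn}. Then \eqref{eq:incidence-indicator} follows immediately by specializing to $\varphi(E,e)=\mathbf 1_A(e)$ and invoking $\mu_e(\mathcal F_e)=1$.

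The step I expect to be most delicate is the transfer of the $O(n)$-invariance of $\rho$ to a pointwise $O(n-1)$-invariance of each $\mu_e$, since the general disintegration theorem only determines the kernel up to a $\sigma$-null set. I plan to resolve this by observing that for each $R\in O(n)$ both $e\mapsto\mu_e$ and $e\mapsto R_*^{-1}\mu_{Re}$ disintegrate $\rho$, hence agree off a $\sigma$-null set $N_R$, and then restricting to a countable dense subgroup of $O(n)$ to obtain a common conull set on which $\mu_e$ is already invariant under the stabilizer of $e$. A final extension to the null set via the transitive $O(n)$-action guarantees the identification with $\nu_{j-1}$ everywhere, not merely almost everywhere.
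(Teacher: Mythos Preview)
Your proposal is correct and follows essentially the same route as the paper: define the $O(n)$-invariant measure $\rho$ on $\mathcal I_j$, push it forward along $\pi_2$ to identify $c_{j,n}\,\sigma$, disintegrate over the fibers $\mathcal F_e\cong G(j-1,n-1)$, and use invariance under the stabilizer to pin down $\mu_e$ as the Haar probability, then specialize $\varphi$ for \eqref{eq:incidence-indicator}. The paper works with $SO(n)$ rather than $O(n)$ and invokes uniqueness of the disintegration more tersely; your explicit countable-dense-subgroup argument for transferring global invariance to fiberwise invariance is in fact more careful than what the paper writes.
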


\begin{proof}
Since $G(j,n)$ and $\Sph^{n-1}$ are compact metric spaces and $\mathcal I_j$ is a closed subset of the product, it follows that $\mathcal I_j$ is also a compact metric space. 
Thanks to Riesz–Markov–Kakutani representation theorem, there exists a finite Borel measure $\mathsf M$ on $\mathcal I_j$ defined by
\begin{equation}\label{eq:Ij-measure}
  \int_{\mathcal I_j}\!\varphi\,d\mathsf M
  \ :=\ \int_{G(j,n)}\ \int_{\Sph^{n-1}\cap E} \varphi(E,e)\,d\sigma_E(e)\,d\nu_j(E),
  \quad \text{ for every } \varphi\in C_c(\mathcal I_j).
\end{equation}
Moreover, we also have $\mathsf M(\mathcal I_j) = j \kappa_j$, corresponding to the surface area of $\mathbb S^{j - 1}$ in $E$, and by construction, the measure $\mathsf M$ is $SO(n)$--invariant. 
The projection $\pi_2 : \mathcal I_j\to \Sph^{n-1}$ is continuous; hence, its pushforward measure $\pi_{2\#}\mathsf M$ is well--defined and is finite since $\pi_{2\#} \mathsf M(\mathbb S^{n - 1}) = \mathsf M(\mathcal I_j) < + \infty$.
Due to the $SO(n)$–invariance of $\mathsf M$, $\pi_{2\#}\mathsf M$ is also $SO(n)$--invariant.
Therefore, there exists a constant $c_{j, n} > 0$ such that
\[
  \pi_{2\#}\mathsf M \ =\ c_{j,n}\,\sigma.
\]
Evaluating both sides on $\mathbb S^{n - 1}$ gives
\[
    c_{j, n} = \dfrac{j \kappa_j}{n \kappa_n}.
\]

\medskip

Normalize $\mathsf M$ to a probability $\widehat{\mathsf M}:=\mathsf M/\mathsf M(\mathcal I_j)$ and likewise normalize $\sigma$ to a probability $\widehat\sigma:=\sigma/\sigma(\Sph^{n-1})$. 
Since $\mathcal I_j$ is a compact metric space (and hence complete and separable), $\widehat{\mathsf M}$ is a Radon probability measure on $\mathcal I_j$ and therefore perfect (see \cite[Theorems 7.1.7 and 7.5.10]{Bogachev_2007}).
According to \cite[Theorem 7.5.6--(iv)]{Bogachev_2007}, every perfect measure on a countably separated $\sigma$–algebra possesses a compact approximating class (see \cite[Definition 1.4.6]{Bogachev_2007}); hence, $\widehat{\mathsf M}$ admits one. 
Since $\widehat{\mathsf M}$ admits a compact approximating class, we are able to apply \cite[Corollary 10.6.7]{Bogachev_2007} on the existence of disintegration to the sub--$\sigma$--algebra $\pi_2^{-1}\mathcal B(\Sph^{n-1})$.
Therefore, there exists a family of conditional probability measures $\{\mu_e\}_{e\in \Sph^{n-1}}$ on the fibers such that for all bounded Borel $\varphi$,
\[
  \int_{\mathcal I_j}\! \varphi\,d\widehat{\mathsf M}
  \ =\ \int_{\Sph^{n-1}} \left(\int_{\pi_2^{-1}(\{e\})}\! \varphi\,d\mu_e\right) d\widehat\sigma(e).
\]
Returning to the original measures and recalling that $\pi_{2\#}\mathsf M=c_{j,n}\sigma$, we obtain the identity~\eqref{eq:incidence-disintegration}. 

\medskip

It remains to check the choice of $\mu_e$.
Since $\pi_2$ is continuous, its graph is Borel in $\mathcal I_j\times \Sph^{n-1}$.
Therefore, the conditional measures $\mu_e$ are concentrated on the fiber $\pi_2^{-1}(\{e\})=\{(E,e):e\in E\}$ for $\sigma$–a.e.\ $e \in \mathbb S^{n - 1}$, see e.g. \cite[Corollary 10.5.7]{Bogachev_2007} (regular conditional probabilities are supported on the fibers). 
Identifying the fiber with $G(j-1,n-1)$ via $E\mapsto E\cap e^\perp$ (with inverse $F \mapsto \mathrm{span}(e) \oplus F$) shows that $\mu_e$ is a probability on $G(j-1,n-1)$. 
Let $R\in SO(n)$ fix $e$, i.e., $Re=e$. Then $R$ maps the fiber $\mathcal F_e$ to itself. The $SO(n)$–invariance of $\mathsf M$ implies (by uniqueness in the disintegration) that $\mu_e$ is invariant under the action of such $R$, i.e., under the full stabilizer $SO(n-1)$ of $e$. 
Hence, $\mu_e$ is the unique $SO(n-1)$–invariant (Haar) probability on the homogeneous space $G(j-1,n-1)$. 
Finally, \eqref{eq:incidence-indicator} follows directly from \eqref{eq:incidence-disintegration} applied to 
\(\varphi = \mathbf{1}_{\{(E,e):\, e \in A\}}\), 
since \(\varphi(E,e) = \mathbf{1}_A(e)\) is constant on each fiber.
Lemma~\ref{lem:incidence} is proven.
\end{proof}

As a direct consequence of the identity \eqref{eq:incidence-indicator}, we get the following corollary.

\begin{corollary}[Fiberwise nullity]\label{cor:incidence-positivity}
Let $1 \leq j \leq n - 1$ and let $\sigma_E$ be defined as in Lemmas \ref{lem:incidence-1}--\ref{lem:incidence}.
For any Borel set $A \subset \Sph^{n-1}$, the following assertions hold:
\begin{itemize}
    \item[(i)] $\sigma(A) = 0$ if and only if 
    $\sigma_E(A \cap E) = 0 \ \text{for}\ \nu_j\text{--a.e. } E \in G(j,n)$;
    
    \item[(ii)] $\nu_j(\{E \in \Gr{j}{n} : A \cap E \neq \varnothing\}) = 0$ implies that $\sigma(A)=0$.
\end{itemize}
\end{corollary}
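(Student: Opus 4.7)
The plan is to read off both parts as essentially immediate consequences of the incidence formula
\[
\int_{G(j,n)} \sigma_E(A \cap E)\,d\nu_j(E)\;=\;c_{j,n}\,\sigma(A),
\]
which is \eqref{eq:j1-incidence} when $j=1$ and \eqref{eq:incidence-indicator} when $2\le j\le n-1$, together with the positivity of the constant $c_{j,n}$ established in those lemmas.

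For part (i), I would first observe that the integrand $E \mapsto \sigma_E(A\cap E)$ is nonnegative, and that its $\nu_j$–measurability is exactly what is built into the disintegration (specializing \eqref{eq:incidence-disintegration} to $\varphi = \mathbf{1}_{\{e\in A\}}$). The direction $\sigma(A)=0 \Rightarrow \sigma_E(A\cap E)=0$ for $\nu_j$-a.e.\ $E$ is then the standard fact that a nonnegative integral vanishing forces the integrand to vanish almost everywhere. The reverse direction uses the same identity from the opposite side: if $\sigma_E(A\cap E)=0$ for $\nu_j$-a.e.\ $E$, then the left--hand side is $0$, and dividing by $c_{j,n}>0$ gives $\sigma(A)=0$.

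For part (ii), the key observation is the trivial implication
\[
A\cap E=\varnothing \;\Longrightarrow\; \sigma_E(A\cap E)=0.
\]
Hence if $\nu_j\bigl(\{E\in G(j,n):A\cap E\neq\varnothing\}\bigr)=0$, then $\sigma_E(A\cap E)=0$ for $\nu_j$-a.e.\ $E$, and part (i) immediately yields $\sigma(A)=0$.

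The only genuinely nontrivial point I expect is the measurability of the exceptional set $\{E:A\cap E\neq\varnothing\}$ appearing in (ii); since it is the $\pi_1$–projection of the Borel set $\pi_2^{-1}(A)\subset\mathcal I_j$, it is analytic rather than necessarily Borel, but this is harmless because $\nu_j$ is a Radon (hence complete) measure on the compact metric space $G(j,n)$, so analytic sets are $\nu_j$–measurable. No further estimates are required — the corollary is essentially just the contrapositive and converse of the incidence identity.
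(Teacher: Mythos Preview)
Your proposal is correct and matches the paper's proof essentially verbatim: both parts are read off directly from the incidence identity $\int_{G(j,n)}\sigma_E(A\cap E)\,d\nu_j(E)=c_{j,n}\,\sigma(A)$ together with nonnegativity of the integrand, and (ii) is reduced to (i) via $A\cap E=\varnothing\Rightarrow\sigma_E(A\cap E)=0$. Your added measurability remark is more careful than the paper (which ignores the point entirely); the only quibble is the phrase ``Radon (hence complete)'', since Radon does not imply completeness---the correct justification is that analytic sets are universally measurable, hence measurable for the completion of any finite Borel measure such as $\nu_j$.
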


\begin{proof}
Recall from Lemmas~\ref{lem:incidence-1}--\ref{lem:incidence} that 
\[
\int_{G(j,n)} \sigma_E(A \cap E)\,d\nu_j(E)\ =\ c_{j,n}\,\sigma(A) \quad 
\text{ for some constant $c_{j, n} > 0$.}
\]

\textit{(i)} 
If $\sigma(A)=0$, then as $\sigma_E(A \cap E) \geq 0$, the above identity yields
$\sigma_E(A \cap E)=0$ for $\nu_j$–a.e\ $E$.
Conversely, if $\sigma_E(A \cap E)=0$ for $\nu_j$–a.e\ $E$, the left-hand
side is zero and so $c_{j,n}\,\sigma(A)=0$.
Therefore $\sigma(A)=0$.

\medskip

\textit{(ii)} If one has $\nu_j(\{E \in \Gr{j}{n}: A\cap E\neq\varnothing\})=0$, then $A\cap E=\varnothing$ for $\nu_j$–a.e.\ $E$.
Hence, we have $\sigma_E(A \cap E) =\sigma_E(\varnothing) = 0$ for $\nu_j$–a.e.\ $E$. Apply (i) to conclude $\sigma(A)=0$.
\end{proof}

\begin{remark}\normalfont
Notice that $\sigma(A)=0$ may not imply $\nu_j (\{E \in \Gr{j}{n}: (A\cap E)\neq\varnothing\})=0$.
Albeit simple, let us consider the following example.
Consider $n = 3$ and fix $E_0 \in G(2, 3)$.
Set $A = E_0 \cap \Sph^{2}$, which is a great circle and therefore $\sigma(A) = 0$.
However, for every two--dimensional plane $E \neq E_0$, $E \cap E_0$ is the whole line and so $A \cap E = \Sph^2 \cap E \cap E_0$ is exactly two antipodal points. 
Therefore, $A \cap E \neq \varnothing$ for every $E \neq E_0$. 
This implies that $\nu_2(\{ E \in G(2, 3) : A \cap E \neq \varnothing \}) = 1$.
\end{remark}

\begin{corollary}[Strict log-convexity]\label{cor:strict-logconvex}
For $1\le j \leq n$, the maps $\widetilde\opV_j$ and $\opV_j$ are strictly log--convex on $\cF_{n,d}$. 
\end{corollary}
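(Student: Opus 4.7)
The plan is to establish strict log--convexity by tracing the chain of inequalities in the proof of Corollary~\ref{cor:logconvex} and pinning down exactly when each of them can be saturated. Fix $f,g\in\cF_{n,d}$ and $\lambda\in(0,1)$, set $h:=(1-\lambda)f+\lambda g$, and suppose $\widetilde\opV_j(h)=\widetilde\opV_j(f)^{1-\lambda}\widetilde\opV_j(g)^{\lambda}$ (respectively $\opV_j(h)=\opV_j(f)^{1-\lambda}\opV_j(g)^{\lambda}$). The target is to deduce $f=g$. The case $j=n$ is Lasserre's result from \cite{La_2015}; so from now on assume $1\le j\le n-1$.

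The first step is to extract fiberwise rigidity from the inner Hölder estimate on each $E$. Equality in the inequality $\int_E A^{1-\lambda}B^{\lambda}\,dy\le(\int_E A\,dy)^{1-\lambda}(\int_E B\,dy)^{\lambda}$ holds only when $A$ and $B$ are proportional almost everywhere on $E$. Applied to $A=e^{-\mathscr R_E f}$, $B=e^{-\mathscr R_E g}$ (and, in the projection case, to the analogous $\Pi_E$-exponentials), this forces the difference $\mathscr R_E g-\mathscr R_E f$ (respectively $\Pi_E g-\Pi_E f$) to be a constant on $E$ for $\nu_j$–a.e.\ $E$. Continuity and $d$-homogeneity with $d\ge 2$, together with vanishing at the origin, pin this constant to zero. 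For $\opV_j$ the concavity step $\Pi_E h\ge(1-\lambda)\Pi_E f+\lambda\Pi_E g$ then automatically becomes an equality, and the outer Hölder bound on $G(j,n)$ is saturated for free once the fiber integrals coincide. Thus, equality in the log--convex inequality forces, for $\nu_j$–a.e.\ $E\in G(j,n)$,
\begin{equation*}
\mathscr R_E f=\mathscr R_E g\quad\text{(section case)}\qquad\text{or}\qquad \Pi_E f=\Pi_E g\quad\text{(projection case)}.
\end{equation*}

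The second step is to lift these fiberwise identities to the whole sphere via the Crofton--type disintegration of Corollary~\ref{cor:incidence-positivity}(ii). In the section case, $\mathscr R_E f=\mathscr R_E g$ immediately gives $f(u)=g(u)$ on $E\cap\Sph^{n-1}$. Set $A:=\{u\in\Sph^{n-1}:f(u)\ne g(u)\}$; by continuity of $f-g$, the set $A$ is \emph{open}, and by hypothesis $A\cap E=\varnothing$ for $\nu_j$–a.e.\ $E$. Corollary~\ref{cor:incidence-positivity}(ii) forces $\sigma(A)=0$; since an open subset of $\Sph^{n-1}$ with vanishing surface measure must be empty, $A=\varnothing$, and positive $d$-homogeneity then extends $f=g$ from the sphere to all of $\R^n$.

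For the projection case the identity $\Pi_E f=\Pi_E g$ on $E$ translates, via~\eqref{proj.id}, into the projection equality $[f\le 1]|E=[g\le 1]|E$, equivalently into the support-function identity $h_{[f\le 1]}(u)=h_{[g\le 1]}(u)$ for every $u\in E$. Repeating the open--set / Crofton--null argument with the continuous function $h_{[f\le 1]}-h_{[g\le 1]}$ yields $h_{[f\le 1]}=h_{[g\le 1]}$ on $\Sph^{n-1}$, hence $[f\le 1]=[g\le 1]$; since each of $f,g$ is the $d$-th power of the Minkowski functional of its own unit sublevel set, $f=g$ follows. The main obstacle will be precisely this last transfer, because $\Pi_E f$ does not recover $f|_E$ pointwise, so the route from $\Pi_E f=\Pi_E g$ to $f=g$ must go through the support--function identification, where the double fibration of Corollary~\ref{cor:incidence-positivity} carries the geometric weight.
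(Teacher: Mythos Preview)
Your proposal is correct and follows essentially the same route as the paper's proof: extract fiberwise equality from the H\"older equality case (forcing $\mathscr R_E f=\mathscr R_E g$, respectively $\Pi_E f=\Pi_E g$, for $\nu_j$--a.e.\ $E$), then lift to the sphere via the incidence--disintegration Corollary~\ref{cor:incidence-positivity}(ii), with the projection case passing through the support--function identity $h_{K|E}=h_K|_E$ exactly as in the paper's Claim~\ref{claim.supp-proj}. The only cosmetic difference is that you phrase the final step as ``open set of zero $\sigma$--measure is empty'' rather than ``$\sigma$--a.e.\ equality plus continuity gives equality everywhere'', which is equivalent.
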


\begin{proof}
The case $j = n$ can be handled as in \cite[Theorem 2.4]{La_2015} due to the fact that the solution set of any polynomial equation has finitely many points.
In what follows, we focus on the case $1 \leq j \leq n - 1$.
We split the proof into two parts.

\smallskip

\textbf{Part 1:} \textit{$\widetilde\opV_j$ is strictly log--convex.}

\smallskip

A careful inspection of the proof of Corollary~\ref{cor:logconvex} shows, equality in the log–convex inequality occurs precisely when equality holds in Hölder’s inequality.
Therefore $\widetilde \opV_j(\lambda f+(1-\lambda)g)
= \widetilde \opV_j(f)^\lambda\,\widetilde \opV_j(g)^{1-\lambda}$ holds if and only if for $\nu_j$--a.e $E \in \Gr{j}{n}$, there exists $c_E \geq 0$ satisfying $\exp(-\mathscr R_E f) = c_E\, \exp(-\mathscr R_E g)$ a.e.\ on $E$.
Notice that $\mathscr R_E \, f$ and $\mathscr R_E \, g$ are continuous and so the previous equality holds true for every  $x \in E$. 
Evaluating at the origin yields $c_E =  1$.
Therefore, it must hold $\mathscr R_E f = \mathscr R_E g$ for $\nu_j$--a.e $E$.
Applying Corollary~\ref{cor:incidence-positivity}--(ii) to the set 
\[
A = \{ e \in \Sph^{n - 1}: (f - g)(e) \neq 0 \},
\]
we immediately infer that $f = g$ $\sigma$--a.e on $\Sph^{n - 1}$. 
Lastly, due to the homogeneity and continuity of $f$ and $g$, we obtain the equality $f = g$ in $\R^n$.
Therefore, the log--convex inequality of $\widetilde \opV_j$ is strict whenever $f \neq g$. 

\medskip

\textbf{Part 2:} \textit{$\opV_j$ is strict log--convex.}

\smallskip

Assume that  $\opV_j(\lambda f+(1-\lambda)g)
= \opV_j(f)^\lambda\, \opV_j(g)^{1-\lambda}$.
Similar arguments to those in Part 1 imply that $\Pi_E f = \Pi_E g$ on $E$ and hence $[ \Pi_E \, f \leq 1 ] = [\Pi_E \,  g \leq 1]$ for $\nu_j$--a.e $E$.
Thanks to Lemma \ref{prop.pro-sec}--\textit{(ii)}, we know that 
\[
[f \leq 1] \vert E = [g \leq 1] \vert E, \quad \text{ for $\nu_j$--a.e $E$}.
\]

\begin{claim}\label{claim.supp-proj}
Let $K, L$ be two convex bodies such that $K \vert E = L \vert E$ for $\nu_j$--a.e $E$.
Then, it holds 
\[
	h_K(e) = h_L(e), \quad \text{ for $\sigma$--a.e $e \in \mathbb S^{n - 1}$},
\]
where $h_M$ denotes the support function of a convex body $M$.
\end{claim}

\textit{Proof of Claim \ref{claim.supp-proj}}

We first observe that for any $E \in \Gr{j}{n}$ and $y \in E$, it holds
\begin{align*}
	h_{K \vert E}(y) = \sup_{y' \in K \vert E} \langle y, y' \rangle = \sup_{y' \in K} \underbrace{\langle y, P_E y' \rangle}_{\, = \, \langle y, y' \rangle} = h_K(y), 
\end{align*}
where $P_E y'$ is the projection of $y'$ onto $E$.
Denote $A = \{ e \in \Sph^{n - 1}: h_K(e) \neq h_L(e) \}$.
Then, the above observation leads to
\begin{align*}
	\EuScript A := \{ E \in \Gr{j}{n}: A \cap  E \neq \varnothing \} = & ~ 
	\{ E \in \Gr{j}{n}: \exists \, e \in \Sph^{n - 1} \cap E, \, h_K(e) \neq h_L(e) \}
	\\
	= & ~ 
	\{ E \in \Gr{j}{n}:  \exists \, e \in \Sph^{n - 1} \cap E, \, h_{K \vert E}(e) \neq h_{L \vert E}(e) \} \\
	= & ~
	\{ E \in \Gr{j}{n}:  K \vert E \neq L \vert E \}.
\end{align*}
Using the assumption $K \vert E = L \vert E$ for $\nu_j$--a.e $E$, we infer that $\nu_j(\EuScript A) = 0$.
Therefore, applying Corollary~\ref{cor:incidence-positivity}--\textit{(ii)}, we deduce that $\sigma(A) = 0$, which completes the proof of Claim~\ref{claim.supp-proj}. \hfill $\lozenge$

\medskip

Coming back to the proof of Step 2, applying Claim~\ref{claim.supp-proj} to the case $K = [f \leq 1]$ and $L = [g \leq 1]$, we infer that $h_{[f \leq 1]} = h_{[g \leq 1]}$ for $\sigma$--a.e on $\Sph^{n - 1}$.
Notice that the support function of a convex body is continuous.
Thus, $h_{[f \leq 1]} = h_{[g \leq 1]}$ in $\Sph^{n - 1}$ and so we have $[f \leq 1] = [g \leq 1]$.
It follows that $f = g$ in $\R^n$. 
In conclusion, we have proved that the log--convex inequality of $\opV_j$ is strict whenever $f \neq g$.
Corollary~\ref{cor:strict-logconvex} is proven.
\end{proof}

\subsection{Analogues of L\"owner--John ellipsoids}
In this section, we are interested in studying homogeneous polynomials which minimize the sublevel set containing a given compact set. 
More precisely, for any compact set $K$ containing the origin in its interior, we consider the following problems
\begin{equation}\tag{$P_0$}\label{P-0}
    \text{minimize} \quad V_j([f \leq 1]) \quad \text{ such that } \quad f \in \cP_{n, d} \text{ and }K \subset [f \leq 1],
\end{equation}
and
\begin{equation}\tag{$\widetilde{P}_0$}\label{Ptil-0}
    \text{minimize} \quad \widetilde V_j([f \leq 1]) \quad \text{ such that } \quad f \in \cP_{n, d} \text{ and }K \subset [f \leq 1].
\end{equation}
The above problems extend the classical L\"owner--John ellipsoid problem in two directions.
On the one hand, instead of considering volume, we minimize more general geometric quantities such as intrinsic and dual volumes. 
On the other hand, the feasible sets are no longer restricted to ellipsoids, but to sublevel sets of polynomials of fixed even degree $d \ge 2$.
In this way, the formulation generalizes the Löwner–John framework, albeit within the limited class of sublevel sets of polynomials due to our approach.

\begin{proposition}\label{prop.exi-uni-JOHN}
    Let $K$ be a compact set containing the origin in its interior and let $1 \leq j \leq n$.
   The problems \eqref{P-0} (resp. \eqref{Ptil-0}) admits a unique solution $f^\star \in \cP_{n, d}$ (resp. $\widetilde{f}^\star \in \cP_{n, d}$).
\end{proposition}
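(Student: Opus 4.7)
My plan is to establish existence via a compactness-plus-lower-semicontinuity argument on the feasible set and uniqueness as an immediate consequence of the strict log-convexity in Corollary~\ref{cor:strict-logconvex}. The two problems \eqref{P-0} and \eqref{Ptil-0} run in parallel, so I describe only the argument for \eqref{P-0}; the dual case is identical after replacing $\opV_j$ by $\widetilde\opV_j$ throughout.

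The first step is to introduce the feasible set
\[
\mathcal A := \{ f \in \cP_{n,d} : K \subset [f \leq 1]\},
\]
check that it is nonempty (take $f_0(x) := R^{-d}\|x\|^d$ where $K \subset RB^n$, which also shows the infimum $m := \inf_{\mathcal A}\opV_j$ is finite), and note that it is convex (the constraint is linear in $f$ and $\cP_{n,d}$ is itself convex). I then use $0 \in \mathrm{int}\,K$ to fix $r > 0$ with $rB^n \subset K$; this forces $\max_{\Sph^{n-1}} f \leq r^{-d}$ for every $f \in \mathcal A$ by $d$-homogeneity. Because $\cH_{n,d}$ is finite-dimensional and a positively $d$-homogeneous polynomial is determined by its restriction to $\Sph^{n-1}$, this uniform bound places $\mathcal A$ in a compact subset of $\cH_{n,d}$.

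For existence, I take a minimizing sequence $(f_k) \subset \mathcal A$ and extract (up to subsequence) a limit $f_k \to f^\star$ in $\cH_{n,d}$. Pointwise passage to the limit preserves convexity, positive $d$-homogeneity and the constraint $K \subset [f^\star \leq 1]$. The main obstacle is to verify $f^\star \in \cF_{n,d}$: if on the contrary $f^\star(e_0) = 0$ for some $e_0 \in \Sph^{n-1}$, then $\mu_k := \min_{\Sph^{n-1}} f_k > 0$ must tend to $0$, so $[f_k \leq 1]$ contains a symmetric segment of length $2\mu_k^{-1/d} \to \infty$ in direction $e_k$ where the minimum is attained, while simultaneously containing the fixed inball $rB^n$. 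A Kubota-type lower bound on the resulting elongated convex body then forces $V_j([f_k \leq 1])$ (and, by the radial representation of dual volumes, $\widetilde V_j([f_k \leq 1])$) to blow up, contradicting $\opV_j(f_k) \to m < \infty$. This coercivity/blow-up step is the one genuinely delicate piece of the argument; once it is in place, Corollary~\ref{cor.lsc} yields $\opV_j(f^\star) \leq \liminf_k \opV_j(f_k) = m$ and, combined with $f^\star \in \mathcal A$, produces the desired minimizer.

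For uniqueness, suppose $f_1, f_2 \in \mathcal A$ were two minimizers with $f_1 \neq f_2$. Convexity of $\mathcal A$ puts $h := \tfrac{1}{2}(f_1 + f_2) \in \mathcal A$, and strict log-convexity (Corollary~\ref{cor:strict-logconvex}) with $\lambda = 1/2$ gives
\[
\opV_j(h) < \opV_j(f_1)^{1/2}\opV_j(f_2)^{1/2} = m,
\]
contradicting the minimality of $m$. Hence the minimizer $f^\star$ is unique in $\cP_{n,d}$.
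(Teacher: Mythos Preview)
Your argument for \eqref{P-0} is correct and follows the paper's direct-method approach (compactness from the inball bound, lower semicontinuity from Corollary~\ref{cor.lsc}, uniqueness from Corollary~\ref{cor:strict-logconvex}), with the welcome addition that you actually justify why the subsequential limit $f^\star$ lies in $\cP_{n,d}$; the paper merely asserts this is ``straightforward''. Your Kubota-type blow-up is valid for intrinsic volumes: the convex hull of $rB^n$ and the segment of length $2\mu_k^{-1/d}$ projects onto a generic $j$-plane to a body of $j$-volume $\gtrsim r^{j-1}\mu_k^{-1/d}|P_E e_k|$, and integrating over $G(j,n)$ forces $V_j([f_k\le 1])\to\infty$.

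The parallel claim for \eqref{Ptil-0}, however, is \emph{false} when $1\le j\le n-2$: dual volumes of an elongated body with fixed inball need not blow up. Take $n=3$, $d=2$, $j=1$, $K=B^3$, and $f_k(x)=x_1^2/k^2+x_2^2+x_3^2\in\mathcal A$. Then $\rho_{[f_k\le 1]}(u)=(u_1^2/k^2+u_2^2+u_3^2)^{-1/2}$ and, by monotone convergence,
\[
\widetilde V_1([f_k\le 1])=\tfrac{1}{3}\!\int_{\Sph^2}\rho_{[f_k\le 1]}(u)\,du\ \nearrow\ \tfrac{1}{3}\!\int_{\Sph^2}(u_2^2+u_3^2)^{-1/2}\,du=\tfrac{2\pi^2}{3}<\infty,
\]
even though $\mu_k=1/k^2\to 0$ and the limit $x_2^2+x_3^2$ lies outside $\cF_{3,2}$. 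More generally, writing $\widetilde V_j([f\le 1])=\tfrac{1}{n}\int_{\Sph^{n-1}}f(u)^{-j/d}\,du$ and using that $f^\star(u)^{1/d}=\lvert\sin\theta\rvert\,p(v)$ near a zero direction $e_0$ (with $p$ the induced seminorm on $e_0^\perp$), the angular contribution is $\int\sin^{\,n-2-j}\theta\,d\theta$, which is finite precisely for $j\le n-2$. So your coercivity step does not rule out degeneration of a minimizing sequence for $\widetilde\opV_j$ in this range, and the conclusion $f^\star\in\cP_{n,d}$ requires a different argument. (The paper's proof, which treats \eqref{Ptil-0} by the phrase ``a similar proof can be applied'', does not supply one either.)
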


\begin{proof}
    Thanks to Theorem~\ref{thm.representation}, observe first that the problems \eqref{P-0} and \eqref{Ptil-0} are respectively equivalent to
    \begin{equation}\label{P-0-modi}
        \text{minimize} \quad \opV_j(f) \quad \text{ such that } \quad f \in \cP_{n, d} \text{ and }K \subset [f \leq 1],
    \end{equation}
    and
    \begin{equation}
        \text{minimize} \quad 
        \widetilde\opV_j(f) \quad \text{ such that } \quad f \in \cP_{n, d} \text{ and }K \subset [f \leq 1].
    \end{equation}
    We shall show the existence of $f^\star$ and a similar proof can be applied to get $\widetilde f^\star$. 
    We proceed by using direct method.
    Let $\{ f_k \}$ be a minimizing sequence of \eqref{P-0-modi}.
    Observe that there exists $r > 0$ such that $rB^n \subset K$ since $K$ contains the origin in its interior.
    Thanks to the $d$--homogeneity of $f$, we have
        \[
            f_k(x) = \| x \|^d f_k\left( \dfrac{x}{\| x \|} \right) = r^d f_k \left( \dfrac{x}{\| x \|} \right), \text{ for every } x \in r \partial B^n,
        \]
    and due to the constraint $K \subset [f_k \leq 1]$, we infer that
        \[
            f_k(e) \leq \frac{1}{r^d} \qquad \text{ for every } e \in \mathbb S^{n - 1} \text{ and } k \in \mathbb N.
        \]
    Note that $\cH_{n, d}$ is a finite--dimensional linear space. 
    The above observation implies that $\{ f_k \} $ is a bounded sequence in $( \cH_{n, d}, \| \cdot \|_{L^\infty(\mathbb S^{n - 1})})$.
    Therefore, there exist a subsequence $\{ f_{k_\ell} \}$ and $f^\star \in \cH_{n, d}$ such that $f_{k_\ell} \to f^\star$ as $\ell \to \infty$ uniformly in $\mathbb S^{n - 1}$.
    It is straightforward to check that $K \subset [f^\star \leq 1]$ and $f^\star \in \cP_{n, d}$.
    Applying Corollary~\ref{cor.lsc}, we deduce
        \begin{align*}
            \inf_{\eqref{P-0}} \opV_j(f) \leq \opV_j(f^\star) \leq \liminf_{\ell \to \infty} \opV_j(f_{k_\ell}) = \inf_{\eqref{P-0}} \opV_j(f),
        \end{align*}
    which implies that $f^\star$ is a minimizer of \eqref{P-0}.
    The uniqueness of $f^\star$ follows directly from the strict log--convexity of $\opV_j$ in Corollary \ref{cor:strict-logconvex}, which completes the proof.
\end{proof}

\begin{lemma}[G\^ateaux derivative of $\widetilde\opV_j$]\label{lem.Vtil-gateaux}
    Let $f \in \cP_{n, d}$ and let $1 \leq j \leq n - 1$.
    Then, $\widetilde\opV_j$ is G\^ateaux differentiable and moreover for any $\phi \in \cH_{n, d}$ one has
    \begin{align*}
       \mathrm d \widetilde\opV_j(f; \phi) = - \dfrac{\sigma_{j, n}}{\Gamma(1 + j/d)} \int_{G(j, n)}\int_E (\mathscr{R}_E \phi)(y) \exp(- \mathscr R_E f(y)) \, dy \, d\nu_j(E).
    \end{align*}
\end{lemma}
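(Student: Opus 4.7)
The strategy is straightforward differentiation under the integral sign, which is available thanks to the linearity of the section operator in its argument: $\mathscr R_E(f + t\phi) = \mathscr R_E f + t\, \mathscr R_E \phi$. The difference quotient reads
\[
\frac{\widetilde\opV_j(f + t\phi) - \widetilde\opV_j(f)}{t} = \frac{\sigma_{j,n}}{\Gamma(1 + j/d)} \int_{G(j,n)} \int_E \frac{e^{-\mathscr R_E f(y) - t\, \mathscr R_E \phi(y)} - e^{-\mathscr R_E f(y)}}{t} \, dy \, d\nu_j(E),
\]
and its integrand converges pointwise, as $t \to 0$, to $-\mathscr R_E \phi(y)\, e^{-\mathscr R_E f(y)}$. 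Everything reduces to justifying the exchange of limit and integration.

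The main obstacle is producing a uniform integrable majorant so that dominated convergence applies. We may assume $\phi \neq 0$, else the statement is trivial. By Proposition~\ref{prop.cF-coer} there exists $\varpi > 0$ with $f(x) \geq \varpi \|x\|^d$ on $\R^n$, and since $\phi \in \cH_{n,d}$ is positively $d$-homogeneous and continuous, the constant $C := \max_{\Sph^{n-1}} |\phi|$ is positive and satisfies $|\phi(x)| \leq C\|x\|^d$. Fix $t_0 := \varpi/(2C)$. For every $s$ with $|s| \leq t_0$, one has $(f + s\phi)(x) \geq (\varpi/2)\|x\|^d$; in particular $f + s\phi \in \cF_{n,d}$, so the functional is well-defined along the ray. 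By the mean value theorem, for any fixed $y \in E$ the difference quotient equals $-\mathscr R_E \phi(y)\, e^{-\mathscr R_E(f + s\phi)(y)}$ for some $s$ between $0$ and $t$; for $|t| \leq t_0$ its absolute value is therefore dominated by
\[
\bigl|\mathscr R_E \phi(y)\bigr|\, e^{-(\varpi/2)\|y\|^d} \;\leq\; C\, \|y\|^d \exp\!\bigl(-(\varpi/2)\,\|y\|^d\bigr).
\]

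This majorant depends on $y$ only through $\|y\|$, and its integral over any $j$-plane $E$, computed in polar coordinates, is a finite constant depending only on $j$, $d$, $\varpi$ and $C$, as in Remark~\ref{rem:integrability}. Since $\nu_j$ is a probability measure on $G(j,n)$, the majorant is integrable with respect to the product measure on $E \times G(j,n)$. Invoking the dominated convergence theorem then gives the claimed formula, and the manifest linearity of the resulting expression in $\phi$ confirms that $\widetilde\opV_j$ is indeed Gâteaux differentiable at $f$.
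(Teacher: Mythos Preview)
Your proof is correct and follows essentially the same approach as the paper: establish the coercivity bound $(f+s\phi)(x)\ge(\varpi/2)\|x\|^d$ for $|s|\le t_0:=\varpi/(2C)$, use it to dominate the difference quotient by $C\|y\|^d\exp(-(\varpi/2)\|y\|^d)$, check this radial majorant is integrable uniformly in $E$, and apply dominated convergence. The only cosmetic difference is that you make the mean value theorem step explicit to control the difference quotient, whereas the paper bounds the $t$-derivative directly (which implicitly relies on the same device).
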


\begin{proof}
The case $\phi \equiv 0$ is vacuous, so we assume $\phi \not\equiv 0$. 
To proceed, we need a lower bound for the operator $\mathscr{R}_E f$ under small perturbations.

\begin{claim}\label{claim.gateaux-bound}
There exists $\bar t > 0$ independent of $E$ such that
\begin{align*}
	\mathscr R_E (f + t \phi)(y) \geq \frac{\varpi}{2} \| y \|^d, \quad \text{ for every } y \in E, \, |t| \leq \bar t.
\end{align*}
\end{claim}

\textit{Proof of Claim~\ref{claim.gateaux-bound}.}
Thanks to Remark~\ref{rem:integrability}, there exists $\varpi > 0$ independent of $E$ such that $\mathscr R_E f(y) \geq \varpi \| y \|^d$ for every $y \in E$ and $E \in \Gr{j}{n}$.
For a nonzero $\phi \in \cH_{n, d}$, set $M = \textstyle \| \phi \|_{L^\infty(\mathbb S^{n - 1})} \in (0, + \infty)$.
Choose $\bar t = \varpi/2M$.
Thanks to the $d$--homogeneity of $\phi$, for any $|t| \leq \bar t$, we have
\[
	|t| |\phi(y)| = |t| \| y \|^d |\phi(y / \| y \|)| \leq M \bar t \| y \|^d, \quad \text{ for every } y \in E \setminus \{ 0 \}.
\]
Then, for any $|t| \leq \bar t$, a direct computation leads to
\begin{align*}
	\mathscr R_E (f + t \phi)(y) = \mathscr R_E f(y) + t \mathscr R_E \phi(y) \geq \varpi \| y \|^d - |t| |\phi(y)| \geq \frac{\varpi}{2} \| y \|^d, \quad \text{ for every } y \in E,
\end{align*}
which completes the proof. \hfill  $\lozenge$

On the one hand, note that
\[
	\dfrac{d}{dt} \exp(- \mathscr R_E(f + t \phi)(y)) = - \mathscr R_E \phi(y) \exp( - \mathscr R_E(f + t \phi)(y)), \quad \text{ for every } y \in  E.
\]
On the other hand, the $d$--homogeneity of $\phi$ implies $| \mathscr R_E \phi(y) | \leq M \| y \|^d$.
Hence, combining with Claim~\ref{claim.gateaux-bound}, we have
\begin{align*}
	\left\vert (\mathscr R_E\phi)(y)\,\exp(-\mathscr R_E (f + t \phi)(y)) \right\vert
\ \le\ M(1+\|y\|^d)\,e^{-(\varpi/2)\|y\|^d}, \quad \text{ for every } |t| \leq \bar t, \, y \in E.
\end{align*}
A direct computation leads to
\[
    \int_{G(j, n)} \int_E (1 + \| y \|^d) e^{-(\varpi/2)\|y\|^d} \, dy \, d\nu_j(E) < + \infty.
\]
Therefore, by the dominated convergence theorem, we get
\begin{align*}
       \mathrm d \widetilde\opV_j(f; \phi) = - \dfrac{\sigma_{j, n}}{\Gamma(1 + j/d)} \int_{G(j, n)}\int_E (\mathscr{R}_E \phi)(y) \exp(- \mathscr R_E f(y)) \, dy \, d\nu_j(E).
\end{align*}
Note that the linearity of $\phi \mapsto \mathrm d \widetilde\opV_j(f; \phi)$ follows from the linearity of the operator $\mathscr R_E$.
Moreover, one can directly check that
\begin{align*}
	|  \mathrm d \widetilde\opV_j(f; \phi)| \leq \dfrac{\sigma_{j, n}}{\Gamma(1 + j/d)}\| \phi \|_{L^\infty(\Sph^{n - 1})} \underbrace{\int_{\R^j} \| x \|^d \exp(- \varpi \| x \|^d) dx}_{\,  < \, + \infty},
\end{align*}
which leads to the boundedness of the map $\phi \mapsto \mathrm d \widetilde\opV_j(f; \phi)$.
Lastly, we can conclude that $\widetilde\opV_j$ is G\^ateaux differentiable, which finishes the proof of Lemma~\ref{lem.Vtil-gateaux}.
\end{proof}

\begin{lemma}[G\^ateaux derivative of $\opV_j$]\label{lem.V-gateaux}
Let $f\in\cP_{n,d}$ and let $1 \leq j \leq n - 1$.
Then, for each $E\in G(j,n)$ and $y\in E$, the problem
\begin{equation}\label{min.E^perp}
\min_{z\in E^\perp} f(y+z)
\end{equation}
has a unique minimizer $z^\ast(E,y) \in E^\perp$, characterized by the projected first-order condition
\begin{equation}\label{eq:proj-FOC}
P_{E^\perp}\,\nabla f\big(y+z^\ast(E,y)\big)=0
\qquad\text{(equivalently, $\nabla f(y+z^\ast(E,y))\in E$).}
\end{equation} 
Furthermore, for each $\phi \in \cH_{n, d}$, it holds
\[
\mathrm{d} \opV_j(f; \phi)
= -\frac{\beta_{j,n}}{\Gamma(1+j/d)}\int_{G(j,n)}\!\int_E \phi\big(y+z^\ast(E,y)\big)\, \exp(-\Pi_E f(y))\,dy\,d\nu_j(E).
\]
\end{lemma}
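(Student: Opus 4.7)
The plan is to follow the three-step pattern of the proof of Lemma~\ref{lem.Vtil-gateaux}: (1) establish existence, uniqueness and the first-order characterization of $z^\ast(E,y)$; (2) use a Danskin-type sandwich to get a pointwise envelope formula for $\tfrac{d}{dt}\big|_{t=0}\Pi_E(f+t\phi)(y)$; and (3) pass from pointwise differentiation to the double integral over $E$ and $G(j,n)$ by dominated convergence.

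For step (1), existence of a minimizer of $z \mapsto f(y+z)$ on $E^\perp$ is immediate from coercivity: Proposition~\ref{prop.cF-coer} gives $f(x)\geq \varpi\|x\|^d$, and for $y\in E$, $z\in E^\perp$ the Pythagorean identity $\|y+z\|^2 = \|y\|^2 + \|z\|^2$ yields $f(y+z)\to\infty$ as $\|z\|\to\infty$. Uniqueness is the subtle part, because $f$ is convex but not necessarily strictly convex. Suppose $z_1\neq z_2$ are both minimizers. By convexity $f$ is constant on $y+[z_1,z_2]$, so the convex univariate polynomial $t\mapsto f(y+z_1+t(z_2-z_1))$ is constant on $[0,1]$ and hence identically constant as a polynomial; in particular its coefficient of $t^d$ vanishes. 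But by positive $d$-homogeneity this coefficient equals $f(z_2-z_1)$, which is strictly positive since $f\in\cF_{n,d}$ and $z_2\neq z_1$. This contradiction delivers uniqueness, and the characterization~\eqref{eq:proj-FOC} is just the first-order condition for an unconstrained minimum on the affine space $y+E^\perp$: the gradient of $f$ at $y+z^\ast$ must annihilate $E^\perp$.

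For step (2), let $z^\ast_t$ denote the unique minimizer of $z \mapsto (f+t\phi)(y+z)$, which exists by the same argument as soon as $|t|\leq \bar t := \varpi/(2M)$ with $M = \|\phi\|_{L^\infty(\Sph^{n-1})}$, because then $(f+t\phi)(x)\geq (\varpi/2)\|x\|^d$, exactly as in Claim~\ref{claim.gateaux-bound}. Testing $\Pi_E(f+t\phi)(y)$ at $z^\ast$ and $\Pi_E f(y)$ at $z^\ast_t$ yields the sandwich
\[
t\,\phi(y+z^\ast_t)\ \leq\ \Pi_E(f+t\phi)(y) - \Pi_E f(y)\ \leq\ t\,\phi(y+z^\ast) \qquad (t>0),
\]
with reversed inequalities for $t<0$. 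Uniform coercivity keeps $\{z^\ast_t\}$ bounded; any cluster point is a minimizer at $t=0$ and thus equals $z^\ast$ by uniqueness, so $z^\ast_t\to z^\ast$ as $t\to 0$. Continuity of $\phi$ then collapses the sandwich to the envelope identity $\tfrac{d}{dt}\big|_{t=0}\Pi_E(f+t\phi)(y) = \phi(y+z^\ast(E,y))$.

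For step (3), I would dominate the difference quotient of $\exp(-\Pi_E(f+t\phi)(y))$ uniformly in $(E,t)$. Combining uniform coercivity with the bound $(f+t\phi)(y)\leq C\|y\|^d$ (for some $C$ independent of $E$ and $|t|\leq \bar t$) forces $\|y+z^\ast_t\|\leq C'\|y\|$, and hence $|\phi(y+z^\ast_t)|\leq M(C')^d \|y\|^d$ by $d$-homogeneity. On the other hand $\Pi_E(f+t\phi)(y)\geq (\varpi/2)\|y\|^d$ since $\inf_{z\in E^\perp}\|y+z\|=\|y\|$. Plugging these into the elementary inequality $|e^{-a}-e^{-b}|\leq|a-b|\,e^{-\min(a,b)}$ produces a dominating function of the form $C''\|y\|^d\exp(-(\varpi/2)\|y\|^d)$, which is integrable over $E$ and, since $\nu_j$ is a probability, also over $G(j,n)$. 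Dominated convergence together with the representation~\eqref{eq:Vj-exp} of Theorem~\ref{thm.representation} then yields the claimed formula for $\mathrm{d}\opV_j(f;\phi)$. The main obstacle I anticipate is the uniqueness step, where strict convexity of $f$ is unavailable; the key input is that positive $d$-homogeneity converts the constancy of a convex polynomial on a segment into a vanishing of $f$ at a nonzero vector, contradicting $f\in\cF_{n,d}$.
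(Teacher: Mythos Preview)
Your proof is correct and follows the same three-step architecture as the paper's, with a couple of technical variations worth noting. First, the paper packages your uniqueness argument as the global statement that every $f\in\cP_{n,d}$ is in fact \emph{strictly} convex; your polynomial-on-a-segment argument proves exactly this, so your closing worry that ``strict convexity of $f$ is unavailable'' is unfounded---you have just reproved it. Second, for the envelope derivative the paper quotes a Danskin theorem (from \cite{BonnansShapiro2000}) and then expresses the difference quotient via the fundamental theorem of calculus, whereas you use the direct two-sided sandwich together with $|e^{-a}-e^{-b}|\le |a-b|\,e^{-\min(a,b)}$; your route is more self-contained and, incidentally, more explicit about why $|\phi(y+z^\ast_t)|$ is controlled by $\|y\|^d$ (the paper writes $|\Pi'(\theta t)|\le M\|y\|^d$ without recording the intermediate bound $\|y+z^\ast_t\|\le C'\|y\|$ that you supply). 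One small imprecision on your side: for $t\neq 0$ the perturbation $f+t\phi$ need not be convex, so your assertion that $z^\ast_t$ is unique ``by the same argument'' does not go through---but the sandwich and the cluster-point argument only require \emph{some} minimizer, so this does not affect correctness.
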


\begin{proof}
Observe first that $f$ is strictly convex.
Indeed, if not, there would exist $\bar x \neq \bar y$ and $\lambda \in (0, 1)$ such that
\[
  f(\lambda \bar y + (1 - \lambda) \bar x) = \lambda f(\bar y) + (1 - \lambda) f(\bar x).  
\]
Equality in Jensen's inequality happens when $f$ restricted to the interval $[\bar x, \bar y] := \big\{ \bar x + t (\bar y - \bar x): t \in [0, 1] \big\}$ is affine.
Set $h(t) := f(\bar x + t(\bar y - \bar x))$ for every $t \in \R$.
Since $h$ is a polynomial and the restriction of $h$ on $[0, 1]$ is affine, it is an affine function. 
Thanks to the homogeneity of $f$, we have
\begin{align*}
    h(t) = f(\bar x + t(\bar y - \bar x)) = t^d f(\bar y - \bar x + t^{-1}\bar x), \quad \text{ for every } t > 0,
\end{align*}
which leads to 
\[
    \lim_{t \to \infty} \dfrac{h(t)}{t^d} = f(\bar y - \bar x) > 0.
\]
Therefore, the leading coefficient of $h$ is $f(\bar x - \bar y) > 0$ and so it is a polynomial of degree $d \geq 2$, which contradicts the fact that $h$ is affine.

\medskip

The restriction $z\mapsto f(y+z)$ to the affine subspace $y+E^\perp$ is strictly convex and coercive, hence admits a unique minimizer $z^\ast(E,y)\in E^\perp$.
The first-order condition \eqref{eq:proj-FOC} follows directly from the basic optimality rule~\cite[Theorem 6.12]{RW_1998}: 
for the minimization of $f$ over the affine subspace 
$y+E^\perp$, the minimizer $z^\ast(E, y)$ satisfies
\[
0 \in \nabla f(y + z^\ast(E, y)) + N_{y + E^\perp}(y+ z^{\ast}(E, y)), 
\]
where $N_U(z)$  denote the normal cone of $U$ at $z$ in the sense \cite[Definition 6.3]{RW_1998}.
Since $N_{y+E^\perp}(w) = E$ for every $w \in y + E^\perp$, this yields 
\begin{align*}
    \text{$\nabla f(y+ z^*(E, y)) \in E$ or equivalently 
$P_{E^\perp}\nabla f(y+\bar z)=0$.}
\end{align*}
Furthermore, according to~\cite[Theorem 14.37]{RW_1998}, we observe that the map $(E, y) \mapsto z^\ast(E, y)$ is measurable.

\medskip

To proceed, we state an analogous argument as in Claim~\ref{claim.gateaux-bound} adapted for the operator $\Pi_E$, whose proof will be omitted.
As a direct consequence of the following claim, the function $f  + t \phi$ is coercive for every $-\bar t \leq t \leq \bar t$.

\begin{claim}\label{claim.gateaux-bound-Pi}
    There exists $\bar t > 0$ independent of $E$ such that for every $|t| \leq \bar t$ and $y \in E$,
    \begin{align*}
        \Pi_E(f + t \phi)(y) \geq \dfrac{\varpi}{2} \| y \|^d, \quad \text{ where } \varpi := \min_{\mathbb S^{n - 1}} f > 0.
    \end{align*}
\end{claim}

Fix $E \in \Gr{j}{n}$ and $y \in E$.
Set 
\begin{align*}
    g_t(z) := & ~ f(y + z) + t \phi(y + z), \quad \text{ for every } z \in E^\perp, \\
    \Pi(t) := & ~ \Pi_E(f + t\phi)(y) = \min_{z \in E^\perp} g_t(z), \\
    Z^\ast_t(E, y) := & \argmin_{z \in E^\perp} g_t(z).
\end{align*}
Notice that the function $\Pi: [-\bar t, \bar t] \to [0, + \infty)$ is  concave and hence, it is Lipschitz and by Rademacher theorem, is differentiable almost every $t$.
By Danskin theorem (see e.g \cite[Theorem 4.13]{BonnansShapiro2000}), for a.e $t$, it holds
\begin{equation}\label{Pi-prime}
    \Pi'(t) \in \mathrm{co} \big\{  \phi(y + z) ~:~ z \in Z_t^\ast(E, y) \big\},
\end{equation}
where $\mathrm{co}(A)$ denotes the convex hull of a set $A$.
At $t = 0$, we have observed that $Z_0^\ast(E, y) = \{ z^\ast(E, y) \}$ and consequently, we have
\begin{equation}\label{deriv-Pi(t)}
\left. \frac{d}{d t} \right|_{t = 0} e^{-\Pi(t)}
= - \ e^{-\Pi_E f(y)} \left. \frac{d}{d t} \right|_{t = 0}\Pi_E (f+ t\phi)(y)
= -\,\phi(y+z^\ast(E, y))\,e^{-\Pi_E f(y)}.
\end{equation}
For $t \neq 0$, the fundamental theorem of calculus gives
\begin{align*}
    \dfrac{e^{-\Pi(t)} - e^{-\Pi(0)}}{t} = - \int_0^1 \Pi'(\theta t) e^{- \Pi(\theta t)} d\theta.
\end{align*}
To justify the integrability of the above expresion, on the one hand, it follows from Claim~\ref{claim.gateaux-bound-Pi} that
\begin{align*}
    e^{- \Pi(\theta t)} = e^{- \Pi_E(f + \theta t \phi)(y)} \leq e^{- (\varpi/2) \| y \|^d}, \quad \text{ for every } |t| \leq \bar t \text{ and  } \theta \in [0, 1]. 
\end{align*}
On the other hand, thanks to \eqref{Pi-prime} and the homogeneity of $\phi$, we have
\begin{align*}
    |\Pi'(\theta t)| \leq \| \phi \|_{L^\infty(\mathbb S^{n - 1})} \| y \|^d, \quad \text{ for every } |t| \leq \bar t \text{ and } \theta \in [0, 1].
\end{align*}
Therefore, we obtain, for every $t \in [- \bar t, \bar t]$,
\begin{align*}
    \left\vert \dfrac{e^{-\Pi(t)} - e^{-\Pi(0)} }{t} \right\vert \leq \int_0^1 |\Pi'(\theta t)e^{-\Pi(\theta t)}| d\theta \leq \| \phi \|_{L^\infty(\mathbb S^{n - 1})} \| y \|^d e^{- (\varpi/2) \| y \|^d},
\end{align*}
in which the r.h.s is integrable in $E$ (uniformly in $E$).
In view of the above observations, applying the Lebesgue dominated convergence theorem and using~\eqref{deriv-Pi(t)}, we arrive at 
\begin{align*}
    \mathrm d\opV_j(f;\phi) = & ~ 
    \dfrac{\beta_{j, n}}{\Gamma(1 + j/d)} \lim_{t \to 0} \int_{\Gr{j}{n}} \int_E \dfrac{e^{- \Pi(t)} - e^{- \Pi(0)} }{t} \, dy \, d\nu_j(E) \\
    = & ~ 
    \dfrac{\beta_{j, n}}{\Gamma(1 + j/d)} \int_{G(j, n)} \int_E \lim_{t \to 0} \dfrac{e^{- \Pi(t)} - e^{- \Pi(0)} }{t} \, dy \, d\nu_j(E) \\
    = & ~ - \int_{\Gr{j}{n}} \int_E \phi\big(y+z^\ast(E,y)\big)\, \exp(-\Pi_E f(y)) \, dy \, d\nu_j(E),
\end{align*}
which completes the proof.
\end{proof}

Now, we are able to characterize the first--order condition for the generalized L\"owner--John ellipsoid associated with intrinsic and dual volumes. 
Thanks to the G\^ateaux differentiability in Lemmas \ref{lem.V-gateaux}--\ref{lem.Vtil-gateaux} together with KKT condition, the first--order condition for the minimizers of \eqref{P-0} and \eqref{Ptil-0} is stated in the following proposition.

\medskip

We state the result only for \eqref{P-0}; an analogous result also applies to \eqref{Ptil-0}. 
The proof of Proposition~\ref{prop.KKT-JOHN} is similar to that of \cite[Theorem 3.2]{La_2015}.
\medskip


\begin{proposition}\label{prop.KKT-JOHN}
Let $1 \leq j \leq n - 1$ and let $K$ be a compact set containing the origin in its interior.
Denote $\{ \phi_k \}_{k = 1}^N$ the canonical basis of $\cH_{n, d}$.
Then, problem~\eqref{P-0} admits a unique minimizer $f^\star \in \cP_{n, d}$ and the following assertions hold true:
\begin{itemize}
	\item[(i)] There exists a finite nonnegative Borel measure $\mu^\star$ on $K$ such that, for every $k \in \{1, \cdots, N \}$,
	\begin{align*}
		& \frac{\beta_{j,n}}{\Gamma(1+j/d)}\int_{G(j,n)}\!\int_E \phi_k \big(y+z^\ast(E,y)\big)\, \exp(-\Pi_E f^\star(y))\,dy\,d\nu_j(E) = \int_K \phi_k(x) d\mu^*(x),  \\
		& \int_K (1 - f^\star(x))d\mu^\star(x) = 0.
	\end{align*}
	Consequently, $\mathrm{supp} \mu^\star \subset [f^\star = 1]$ and $\mu^\star(K) = (j/d) V_j([f^\star \leq 1])$.
	Furthermore, the measure $\mu^\star$ can be chosen atomic, i.e,
	\begin{align*}
		\mu^* = \sum_{\ell = 1}^M \lambda_\ell \delta_{x^\ell}, \quad 
		\text{ where } \lambda_\ell > 0 \text{ and } x^\ell \in K \cap [f^\star = 1],
	\end{align*}
	and the optimality condition becomes
	\begin{align*}
		\frac{\beta_{j,n}}{\Gamma(1+j/d)}\int_{G(j,n)}\!\int_E \phi_k \big(y+z^\ast(E,y)\big)\, \exp(-\Pi_E f^\star(y))\,dy\,d\nu_j(E) = \sum_{\ell = 1}^M \lambda_\ell \phi_k(x^\ell), 
	\end{align*}
for every $k \in \{1, \cdots, N \}$ and $f^\star(x^\ell) = 1$ for every $\ell \in \{1, \cdots, M \}$.
	\item[(ii)]
	Let $f^\star\in P_{n,d}$ be feasible.
	Assume that there exist finite points $\{ (\lambda_\ell, x^\ell) \} \subset (0, + \infty) \times K$ satisfying $f^\star(x^\ell) = 1$ for every $\ell$.
If one has
	\begin{align*}
	\frac{\beta_{j,n}}{\Gamma(1+j/d)}\int_{G(j,n)}\!\int_E \phi_k \big(y+z^\ast(E,y)\big)\, \exp(-\Pi_E f^\star(y))\,dy\,d\nu_j(E) = \sum_{\ell = 1}^M \lambda_\ell \phi_k(x^\ell), \,\, \text{ for every } k,
	\end{align*}
then $f^\star$ is the unique minimizer of~\eqref{P-0}.
\end{itemize}
\end{proposition}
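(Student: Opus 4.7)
The plan is to recast problem~\eqref{P-0} as a convex program in the finite-dimensional space $\cH_{n,d}$: writing $f = \sum_{k=1}^N c_k \phi_k$, the objective $\opV_j(f)$ is convex by Corollary~\ref{cor:logconvex} and G\^ateaux differentiable by Lemma~\ref{lem.V-gateaux}, while the constraint $K \subset [f \le 1]$ amounts to the semi-infinite family of linear inequalities $\sum_k c_k \phi_k(x) \le 1$, $x \in K$. Existence and uniqueness of $f^\star$ are already supplied by Proposition~\ref{prop.exi-uni-JOHN}. Slater's condition holds, e.g., by taking $f_0(x) = 2^d \|x\|^d / r^d \in \cP_{n,d}$ with $r$ large enough that $K \subset \tfrac{r}{2} B^n$, so the standard theory of semi-infinite convex duality (or a direct separation argument in the dual of $C(K)$) produces a nonnegative finite Borel measure $\mu^*$ on $K$ satisfying the stationarity relation $\partial_{c_k} \opV_j(f^\star) + \int_K \phi_k \, d\mu^* = 0$ for every $k$ together with the complementary slackness identity $\int_K (1 - f^\star) d\mu^* = 0$. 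Substituting the G\^ateaux formula from Lemma~\ref{lem.V-gateaux} yields the first identity of~(i), while the nonnegativity of $\mu^*$ and of $1-f^\star$ on $K$ forces $\mathrm{supp}\,\mu^* \subset K \cap [f^\star = 1]$.

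The value $\mu^*(K) = (j/d) V_j([f^\star \le 1])$ is extracted by testing the KKT relation against the coefficients of $f^\star$ itself. The right-hand side becomes $\int_K f^\star d\mu^* = \mu^*(K)$ thanks to the slackness identity, while the left-hand side becomes
\begin{align*}
\frac{\beta_{j,n}}{\Gamma(1+j/d)} \int_{G(j,n)}\!\int_E f^\star\bigl(y + z^*(E,y)\bigr)\,\exp(-\Pi_E f^\star(y))\,dy\,d\nu_j(E),
\end{align*}
and by the very definition of $z^*(E,y)$ this integrand equals $\Pi_E f^\star(y) \exp(-\Pi_E f^\star(y))$. A short homogeneity computation --- differentiating the scaling identity $\int_E \exp(-\lambda u(y))\,dy = \lambda^{-j/d}\,\Gamma(1+j/d)\,\vol_j([u\le 1])$ at $\lambda=1$ with $u = \Pi_E f^\star$, see Lemma~\ref{lem.vol_j} --- converts this to $(j/d)\,\Gamma(1+j/d)\,\vol_j([f^\star \le 1] \mid E)$. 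Integrating over $G(j,n)$ and applying the Kubota formula~\eqref{kubota} delivers exactly $(j/d) V_j([f^\star \le 1])$, as claimed.

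The atomic reduction follows from Tchakaloff's theorem: since $\mu^*$ enters the KKT system only through its action on the $N$-dimensional space $\mathrm{span}(\phi_1,\dots,\phi_N)$, one may replace it by a finitely supported measure $\sum_{\ell=1}^M \lambda_\ell \delta_{x^\ell}$ with $M \le N+1$, $\lambda_\ell > 0$, and $x^\ell \in K \cap [f^\star = 1]$ reproducing the same $N$ integrals. For assertion~(ii), the availability of such atomic multipliers makes $f^\star$ a critical point of the convex function $f \mapsto \opV_j(f) + \sum_\ell \lambda_\ell (f(x^\ell) - 1)$; since this Lagrangian is strictly convex by Corollary~\ref{cor:strict-logconvex} and $f^\star$ is primal-feasible by hypothesis, it is the unique global minimizer of~\eqref{P-0}. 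I expect the main difficulty to lie in the identity $f^\star(y + z^*(E,y)) = \Pi_E f^\star(y)$ combined with the sharp homogeneity constant $j/d$: recognizing this structural cancellation is what converts an abstract KKT relation into the geometric normalization $\mu^*(K) = (j/d) V_j([f^\star \le 1])$ and makes the whole package quantitative.
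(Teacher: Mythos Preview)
Your proposal is correct and follows the same semi-infinite convex programming route that the paper itself points to (the paper omits the argument and simply refers to \cite[Theorem~3.2]{La_2015}). You in fact supply more detail than the paper does, most notably the clean extraction of the normalization $\mu^\star(K)=(j/d)\,V_j([f^\star\le1])$ via the identity $f^\star\bigl(y+z^\ast(E,y)\bigr)=\Pi_E f^\star(y)$ together with differentiation of the scaling law from Lemma~\ref{lem.vol_j}.
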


\subsection{Nonnegative polynomial with sublevel sets of minimal intrinsic/dual volumes}
Let $\tnorm{\cdot}$ be a norm in $\cH_{n, d}$.
This section deals with the following optimization problems:
\begin{equation}\label{P-unit-bal}\tag{$Q_0$}
	\text{minimize} \quad V_j([f \leq 1]) \quad \text{ such that } \quad f \in \cP_{n, d}, \,\, \tnorm{f} \leq 1,
\end{equation}
and
\begin{equation}\label{Ptil-unit-ball}\tag{$\widetilde Q_0$}
	\text{minimize} \quad \widetilde V_j([f \leq 1]) \quad \text{ such that } \quad f \in \cP_{n, d}, \,\, \tnorm{f} \leq 1.
\end{equation}
A related problem has been studied in the context of the volume functional, as shown in \cite{KL_2022}, in which the authors characterized the minimizer for various $O(n)$–invariant norms on $\cH_{n, d}$.
The problems \eqref{P-unit-bal}--\eqref{Ptil-unit-ball} extend to the case of intrinsic and dual volumes.
Note that the above problem is simply optimizing the functional $\opV_j$ and $\widetilde \opV_j$ over the intersection of $\cP_{n, d}$ and the unit ball in $(\cH_{n, d}, \tnorm{\cdot})$.

\medskip

Thanks to the strict log--convexity in Corollary~\ref{cor:strict-logconvex} and the lower semicontinuity in Corollary~\ref{cor.lsc}, it is straightforward to see that the problems~\eqref{P-unit-bal} and ~\eqref{Ptil-unit-ball} admit a unique minimizer. 
Furthermore, using a similar argument as those in \cite[Theorem 1.2]{KL_2022}, we have an exact minimizer whenever the norm $\tnorm{\cdot}$ is $O(n)$--invariant. 
We summarize these facts in the following proposition.
This shows that the Euclidean ball is the smallest intrinsic and dual volumes among all convex homogeneous polynomials with bounded $O(n)$--invariant norm.

\begin{proposition}
Let $1 \leq j \leq n$ and let $\tnorm{\cdot}$ be a norm in $\cH_{n, d}$.
Then, the following assertions hold true:
\begin{itemize}
    \item[(i)] The problems \eqref{P-unit-bal} (resp. \eqref{Ptil-unit-ball}) admits a unique minimizer $f^\star$ (resp. $\widetilde f^\star$).
    \item[(ii)] If $\tnorm{\cdot}$ is $O(n)$--invariant, that is,
    \begin{align*}
        \tnorm{f \circ \rho} = \tnorm{f} \quad \text{ for every } f \in \cH_{n, d} \text{ and } \rho \in O(n),
    \end{align*}
    then $f^\star = \widetilde f^\star = b_{n, d}/\tnorm{b_{n, d}}$, where $b_{n, d}(x) = (x_1^2 + \cdots + x_n^2)^{d/2}$.
    \end{itemize}
\end{proposition}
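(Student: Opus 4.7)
My plan is to apply the direct method of calculus of variations, mirroring the strategy used in Proposition~\ref{prop.exi-uni-JOHN}. Let $\{f_k\}\subset\cP_{n,d}$ be a minimizing sequence with $\tnorm{f_k}\le1$. Since $\cH_{n,d}$ is finite-dimensional, all norms are equivalent; in particular $\tnorm{\cdot}$ controls $\|\cdot\|_{L^\infty(\Sph^{n-1})}$, so a subsequence $f_{k_\ell}$ converges uniformly on $\Sph^{n-1}$ (hence pointwise on $\R^n$ by $d$-homogeneity) to some $f^\star\in\cH_{n,d}$. The limit is convex, non-negative and satisfies $\tnorm{f^\star}\le1$ (both conditions pass to the limit). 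Observing that $b_{n,d}/\tnorm{b_{n,d}}$ is feasible with finite $\opV_j$, the infimum is finite, and I would rule out the degeneration $f^\star\notin\cF_{n,d}$ by noting that if $f^\star$ vanished on a ray, the sublevel sets $[f_{k_\ell}\le1]$ would become unbounded in some direction, forcing intrinsic (and dual) volumes to blow up and contradicting minimality. Thus $f^\star\in\cP_{n,d}$, and by the lower semicontinuity from Corollary~\ref{cor.lsc}, $\opV_j(f^\star)\le\liminf_\ell\opV_j(f_{k_\ell})=\inf$, so $f^\star$ is a minimizer. For uniqueness I would invoke Corollary~\ref{cor:strict-logconvex}: if two distinct minimizers $f^\star,g^\star$ with $\opV_j(f^\star)=\opV_j(g^\star)=m$ existed, the midpoint $h=\tfrac12 f^\star+\tfrac12 g^\star$ would remain feasible (since both $\cP_{n,d}$ and the unit $\tnorm{\cdot}$-ball are convex), yet strict log-convexity would give $\opV_j(h)<m^{1/2}m^{1/2}=m$, a contradiction. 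The treatment of $\widetilde\opV_j$ is identical.

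\textbf{Plan for (ii).} My plan is a symmetrization argument combined with the uniqueness from (i). For any $\rho\in O(n)$, the function $f^\star\circ\rho$ is again a convex, positively $d$-homogeneous polynomial positive off the origin, hence lies in $\cP_{n,d}$. The $O(n)$-invariance of $\tnorm{\cdot}$ gives $\tnorm{f^\star\circ\rho}=\tnorm{f^\star}\le1$, so $f^\star\circ\rho$ is feasible. Moreover $[f^\star\circ\rho\le1]=\rho^{-1}[f^\star\le1]$ and intrinsic/dual volumes are $O(n)$-invariant, so $\opV_j(f^\star\circ\rho)=\opV_j(f^\star)$ and similarly for $\widetilde\opV_j$. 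By the uniqueness in (i), $f^\star\circ\rho=f^\star$ for every $\rho\in O(n)$, i.e.\ $f^\star$ is $O(n)$-invariant. Classical invariant theory for $O(n)$ then forces $f^\star$ to be a polynomial in $\|x\|^2$; combined with $d$-homogeneity and the evenness of $d$, this yields $f^\star=c\,(x_1^2+\cdots+x_n^2)^{d/2}=c\,b_{n,d}$ for some $c>0$. To pin down $c$, I would verify that the constraint is active at the optimum: if $\tnorm{f^\star}<1$, taking $\mu>1$ with $\mu\tnorm{f^\star}\le1$ gives a feasible $\mu f^\star$ with $[\mu f^\star\le1]=\mu^{-1/d}[f^\star\le1]$, hence $\opV_j(\mu f^\star)=\mu^{-j/d}\opV_j(f^\star)<\opV_j(f^\star)$, contradicting optimality. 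Therefore $\tnorm{f^\star}=1$, which forces $c=1/\tnorm{b_{n,d}}$. The same argument verbatim yields $\widetilde f^\star=b_{n,d}/\tnorm{b_{n,d}}$.

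\textbf{Main obstacle.} The most delicate point is the closure issue in (i): the set $\cP_{n,d}\cap\{\tnorm{f}\le1\}$ is not manifestly closed in $\cH_{n,d}$, since a minimizing sequence might limit to a merely non-negative convex polynomial vanishing on a nonzero ray. The key technical step is the blow-up argument ruling out such degeneration, which relies on the geometric observation that unbounded sublevel sets force $\opV_j$ and $\widetilde\opV_j$ to be infinite. The invariant-theoretic classification in (ii), while essentially textbook, crucially uses the parity assumption on $d$, without which no nonzero $O(n)$-invariant $d$-homogeneous polynomial exists and the identification of $f^\star$ with a multiple of $b_{n,d}$ would break down.
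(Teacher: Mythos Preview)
Your plan follows exactly the route the paper takes: for (i) the paper's entire argument is the single sentence preceding the proposition, invoking Corollary~\ref{cor.lsc} (lower semicontinuity) and Corollary~\ref{cor:strict-logconvex} (strict log--convexity), and for (ii) it defers to the symmetrization argument of \cite{KL_2022}. Your write-up is considerably more detailed than the paper's and correctly isolates the possible degeneration of the limit $f^\star$ to the boundary of $\cF_{n,d}$ as the only delicate step; the invariant-theoretic identification and the activity-of-the-constraint argument in (ii) are both correct.

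One caveat on your blow-up step: the assertion that unbounded sublevel sets force \emph{dual} volumes to diverge is not true for small $j$. Concretely, the ellipsoids $[\,x_1^2/k^2+x_2^2+\cdots+x_n^2\le 1\,]$ are feasible for the $L^\infty(\Sph^{n-1})$ norm, degenerate as $k\to\infty$, yet have $\widetilde V_j$ bounded for every $1\le j\le n-2$ (a direct computation with $\widetilde V_j=\tfrac1n\int_{\Sph^{n-1}} f^{-j/d}$). So the mechanism you sketch works cleanly for $\opV_j$ via monotonicity of intrinsic volumes and a cone-in-projection estimate, but for $\widetilde\opV_j$ in the range $j\le n-2$ ruling out degeneration needs a different argument (e.g.\ combining Fatou with a perturbation $f^\star+\varepsilon b_{n,d}$ and comparing rates). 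The paper's terse treatment does not address this point either.
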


\subsection{Computing intrinsic volumes via block--decomposition}
Beyond the optimization applications mentioned earlier, we now turn to a quantitative feature of intrinsic and dual volumes.  
Block decompositions offer a way to simplify the computation of  dual and intrinsic volumes.
When a function splits orthogonally into components supported on mutually orthogonal subspaces, its projections and sections inherit a separable structure. 

\medskip

Let $\{ U_b \}_{b \in \{1, \cdots, B\}}$ be orthogonal subspaces of $\R^n$ such that
\[
\R^n = \bigoplus_{b=1}^B U_b
\]
which is called an orthogonal decomposition.
Assume that $f$ is \textit{block--separable} (relative to $\{ U_b \}$), i.e, there exist $f_b \in \cP_{U_b, d}$ for every $b \in \{1, \cdots, B \}$ such that  
\[
f(x) = \sum_{b=1}^B f_b(P_{U_b}x), \quad \text{ for every } x \in \R^n.
\]
Here, for each block $U_b$, we denote $P_{U_b}: \R^n \longrightarrow U_b$ the orthogonal projection onto $U_b$ and we also denote
\[
\cP_{U_b,d} :=  \big\{ f: U_b \to [0, +\infty) \;:\; f \text{ is $d$--homogeneous, convex and positive except the origin} \big\}.
\]
The following proposition provides an efficient way to compute intrinsic volumes of sublevel sets of block-separable homogeneous functions

\begin{proposition}[Block-orthogonal decomposition]\label{prop:block-separable}
Let $\{ b_i \}_{i \in \{1, \cdots, r \}} \subset \{1, \cdots, B \}$ and let $E\subset U_{b_1}\oplus\cdots\oplus U_{b_r}$ be any subspace orthogonal to the remaining blocks. 
Then, if one writes
\[
y=\sum_{b\in\{b_1,\dots,b_r\}} y_b \quad \text{ where $y_b\in U_b$ for $b \in \{b_1, \cdots, b_r \}$ },
\]
then it holds
\[
\Pi_E f(y)=\sum_{b\in\{b_1,\dots,b_r\}} \Pi_{E\cap U_b}\,f_b(y_b)
\qquad\text{and}\qquad
\mathscr R_E f(y) = \sum_{b \in \{b_1, \cdots, b_r\}} \mathscr R_{E\cap U_b} f_b(y_b).
\]
In particular, the exponential integrals factor:
\begin{equation}\label{id_Pi_block}
\int_E \exp(-\Pi_E f(y))\,dy=\prod_{b\in\{b_1,\dots,b_r\}}\ \int_{E\cap U_b} \exp(-\Pi_{E\cap U_b} f_b(y_b)) \,dy_b,
\end{equation}
and analogous
\begin{equation}
    \int_E \exp(- \mathscr R_E f(y)) \, dy = \prod_{b \in \{b_1, \cdots, b_r \}} \int_{E \cap U_b} \exp(- \mathscr R_{E \cap U_b} f_b(y_b)) \, dy_b.
\end{equation}
\end{proposition}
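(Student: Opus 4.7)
The plan is to prove the two pointwise identities for $\Pi_E f$ and $\mathscr{R}_E f$ separately, and then deduce the factorization of exponential integrals from Fubini's theorem, since under block-compatibility ($E = \bigoplus_{i=1}^r (E \cap U_{b_i})$, implicit in the statement) Lebesgue measure on $E$ factors as the product of Lebesgue measures on the orthogonal summands.

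For the section identity, the argument is essentially immediate. Fix $y \in E \subset U_{b_1} \oplus \cdots \oplus U_{b_r}$ and write $y = \sum_{i=1}^{r} y_{b_i}$ with $y_{b_i} \in E \cap U_{b_i}$. Since $P_{U_b}y = y_b$ for $b \in \{b_1,\dots,b_r\}$ and $P_{U_c}y = 0$ for $c \notin \{b_1,\dots,b_r\}$, the block-separability of $f$ yields $\mathscr{R}_E f(y) = f(y) = \sum_{i=1}^{r} f_{b_i}(y_{b_i}) = \sum_{i=1}^{r} \mathscr{R}_{E\cap U_{b_i}} f_{b_i}(y_{b_i})$, since each $f_{b_i}$ restricted to $E \cap U_{b_i}$ is its section operator.

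The projection identity is the main step. Set $W := \bigoplus_{i=1}^{r} U_{b_i}$, so $W^\perp = \bigoplus_{c \notin \{b_i\}} U_c$. Since $E \subset W$, the orthogonal complement decomposes as $E^\perp = (E^\perp \cap W) \oplus W^\perp$. For any $z \in E^\perp$, write $z = z_W + z_{W^\perp}$ accordingly, and further decompose $z_W = \sum_{i=1}^{r} z_{b_i}$ with $z_{b_i} \in U_{b_i}$, and $z_{W^\perp} = \sum_{c \notin \{b_i\}} z_c$ with $z_c \in U_c$. Block-separability gives
\begin{equation*}
f(y+z) \;=\; \sum_{i=1}^{r} f_{b_i}(y_{b_i} + z_{b_i}) \;+\; \sum_{c \notin \{b_i\}} f_c(z_c).
\end{equation*}
Since each $f_c \in \cP_{U_c,d}$ is nonnegative and vanishes only at the origin, the infimum over $z_{W^\perp}$ is attained at $z_{W^\perp} = 0$, which eliminates the contribution of inactive blocks. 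Under block-compatibility of $E$, the orthogonal complement of $E$ inside $W$ factors as $E^\perp \cap W = \bigoplus_{i=1}^{r} (E \cap U_{b_i})^{\perp_{U_{b_i}}}$, so the residual variables $z_{b_i}$ range independently over $(E \cap U_{b_i})^{\perp_{U_{b_i}}}$ and the infimum decouples:
\begin{equation*}
\Pi_E f(y) \;=\; \sum_{i=1}^{r} \inf_{z_{b_i} \in (E \cap U_{b_i})^{\perp_{U_{b_i}}}} f_{b_i}(y_{b_i} + z_{b_i}) \;=\; \sum_{i=1}^{r} \Pi_{E\cap U_{b_i}} f_{b_i}(y_{b_i}).
\end{equation*}

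The main obstacle is the orthogonal decomposition $E^\perp \cap W = \bigoplus_{i} (E \cap U_{b_i})^{\perp_{U_{b_i}}}$, which is precisely where the block-compatibility of $E$ enters (without it, a diagonal subspace could spoil the separation of the optimizer). Once this splitting is established, exponentiating the additive identities turns sums into products, and Fubini's theorem applied to the orthogonal product structure of Lebesgue measure on $E = \bigoplus_i (E \cap U_{b_i})$ delivers both integral factorizations~\eqref{id_Pi_block} and its section counterpart.
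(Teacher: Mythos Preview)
Your proposal is correct and follows essentially the same approach as the paper: decompose $E^\perp$ into its block components within $W=\bigoplus_i U_{b_i}$ and the inactive part $W^\perp$, use block-separability of $f$ to split $f(y+z)$ accordingly, observe that the inactive blocks minimize to zero, and then apply Fubini on the orthogonal splitting $E=\bigoplus_i(E\cap U_{b_i})$. You are in fact more explicit than the paper in flagging the block-compatibility hypothesis $E=\bigoplus_i(E\cap U_{b_i})$, which the paper's own proof uses (via the decomposition $E^\perp\cap W=\bigoplus_i(U_{b_i}\cap E^\perp)$ and the final Fubini step) without isolating it as an assumption.
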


\begin{remark}\normalfont
As a direct consequence of Proposition~\ref{prop:block-separable},  
for any block-separable function $f \in \mathcal P_{n,d}$ and for any $1 \le j \le n-1$,  
the intrinsic and dual intrinsic volumes of its sublevel set $[f \le \alpha]$ admit respectively the following factorized representations:
\begin{align*}
    V_j([f \leq \alpha]) = \dfrac{\alpha^{j/d}\beta_{j, n}}{\Gamma(1 + j/d)} \int_{G(j, n)} \prod_{b \in B_E} \int_{E \cap U_b} \exp(- \Pi_{E \cap U_b} f_b(y_b)) \, dy_b \, d\nu_j(E),
\end{align*}
and 
\begin{align*}
    \widetilde V_j([f \leq \alpha]) = \dfrac{\alpha^{j/d}\sigma_{j, n}}{\Gamma(1 + j/d)} \int_{G(j, n)} \prod_{b \in B_E} \int_{E \cap U_b} \exp(- \mathscr R_{E \cap U_b} f_b(y_b)) \, dy_b \, d\nu_j(E),
\end{align*}
where the active index set $B_E$ is defined by
\[
    B_E := \{ b \in \{1, \cdots, B \}: U_b \cap E \neq \{ 0 \} \}.
\]
\end{remark}

\begin{proof}[Proof of Proposition~\ref{prop:block-separable}]
Using the fact that $E$ is orthogonal to $U_b$ for $b \not\in \{b_1, \cdots, b_r\}$, it follows from the definition of the family $\{ U_b \}$ that
\begin{equation}\label{E^perp_decomp}
    E^\perp 
    =
    \left( \bigoplus_{b\in\{b_1,\dots,b_r\}} \big( U_b\cap E^\perp \big) \right)
    \,\oplus\,
    \left( \bigoplus_{b\notin\{b_1,\dots,b_r\}}U_b \right).
\end{equation}
Fix 
\begin{align*}
    y = & \sum_{b\in\{b_1,\dots,b_r\}}y_b\in E, \quad  \text{ where } b \in \{b_1, \cdots, b_r \}, \\
    z = & \sum_b z_b \in E^\perp, \quad \text{ where }
    z_b \in 
    \begin{dcases}
        U_b \cap E^\perp, & \text{ if } b \in \{b_1, \cdots, b_r \},\\
        U_b, & \text{ otherwise}
    \end{dcases}.
\end{align*}
Using the fact that $f$ is block--separable, we have
\begin{equation}\label{f_decomp}
f(y+z)=\sum_{b \in \{b_1, \cdots, b_r \}} f_b(y_b+z_b) + \sum_{b \not\in \{b_1, \cdots, b_r \}} f_b(z_b).
\end{equation}
It follows from \eqref{E^perp_decomp} and \eqref{f_decomp} that for any $y \in E$,
\begin{align*}
\Pi_E f(y)
= & ~ 
\inf_{z\in E^\perp}\sum_{b \in \{b_1, \cdots, b_r \}} f_b(y_b + z_b) + \sum_{b \not\in \{b_1, \cdots, b_r\}} f_b(z_b)  \\
= & ~ 
\sum_{b\in\{b_1,\dots,b_r\}}\inf_{z_b\in U_b\cap E^\perp} f_b(y_b+z_b)\ +\ \sum_{b\notin\{b_1,\dots,b_r\}} \inf_{\hat z_b\in U_b} f_b(\hat z_b).
\end{align*}
Notice that $\textstyle\inf_{\hat z_b \in U_b} f_b(\hat z_b) = 0$.
This yields the projection identity:
\begin{align*}
    \Pi_E f(y) = \sum_{b \in \{b_1, \cdots, b_r \}} \Pi_{E \cap U_b} f_b(y_b).
\end{align*}
Finally, one can use Fubini theorem and orthogonality of the splitting 
\[
E=\bigoplus_{b\in\{b_1,\dots,b_r\}} (E\cap U_b)
\]
to obtain the identity~\eqref{id_Pi_block}.
Lastly, the linearity of $\mathscr{R}_E$ directly implies that
\begin{align*}
    \mathscr R_E f(y) = \mathscr R_E \sum_{b \in \{b_1, \cdots, b_r \}} f_b(y_b) = \sum_{b \in \{b_1, \cdots b_r\} } \mathscr R_{E \cap U_b} f_b(y_b).
\end{align*}
One can proceed similarly as the proof for the projection operator $\Pi_E$ to obtain the result for $\mathscr R_E$.
Proposition~\ref{prop:block-separable} is proven.
\end{proof}

\begin{example}\normalfont
Let us consider $\R^n = U_1 \oplus U_2$, where $U_1 = \R^m$ and $U_2 = \R^{n - m}$.
Let us fix $a, b > 0$.
Consider $f(x) = a \| x' \|^d + b \| x'' \|^d$ where $x = (x', x'') \in U_1 \times U_2$.
For any $E \in \Gr{j}{n}$, set $i_E = \mathrm{dim} (E \cap U_1)$ and $j - i_E = \mathrm{dim}(E \cap U_2)$.
It follows from Proposition~\ref{prop:block-separable} that
\begin{align*}
    \int_E \exp(- \Pi_E f(y)) \, dy 
    = & ~ 
    \int_{E \cap U_1} \exp(- a \| y \|^d) \, dy \int_{E \cap U_2} \exp(- b\| y \|^d) \, dy \\
    = & ~
    \kappa_{i_E} \kappa_{j - i_E}a^{-i_E/d}b^{-(j - i_E)/d}\Gamma(1 + i_E/d)\Gamma(1 + (j - i_E)/d).
\end{align*}
Thanks to \cite[Lemma 13.2.1]{SW_2008}, we have that 
\[
i_E = i_\star = \max \{ 0, j - (n - m) \} \quad \text{ for $\nu_j$--a.e $E$.}
\]
Therefore, for any $1 \leq j \leq n - 1$, we get an explicit formula for the intrinsic volume of the sublevel set of $f$:
\begin{align*}
    V_j([f \leq \alpha]) = \dfrac{\alpha^{j/d}\beta_{j, n}}{\Gamma(1 + j/d)} \kappa_{i_\star} \kappa_{j - i_\star} a^{-i_\star/d} b^{-(j - i_\star)/d} \Gamma(1 + i_\star/d) \Gamma(1 + (j - i_\star)/d).
\end{align*}
\end{example}

\section{Arithmetic applications of the exponential representations}
\label{sec:arith-applications}

This section discusses arithmetic applications of the Laplace–Grassmannian representation, focusing on how intrinsic volumes govern lattice–point discrepancies in convex polynomial sublevel sets. 
In particular, we explore Lipschitz-type bounds and related counting results linking these analytic representations with classical problems in the geometry of numbers.

\medskip


\subsection{A Lipschitz–type lattice discrepancy bound via intrinsic volumes}\label{sec:lipschitz}

In what follows, we quantify the discrepancy between lattice points and the volume of sublevel sets.
For lattice points in the region ${[f \le \alpha]}$ (and its variants), the leading term is of order $\alpha^{n/d}$, while the error term has order $\alpha^{(n-1)/d}$. 
For a convex body $K \subset \R^n$, the classical heuristic 
\[
\#(K \cap \Z^n) \approx \vol_n(K)
\]
suggests that the discrepancy is controlled by the size of the boundary—an idea dating back to 
Davenport’s Lipschitz principle~\cite{Davenport1951, Davenport1964Corr} and the mean and second-moment 
bounds of Rogers~\cite{Rogers1955, Rogers1956}. 
To our knowledge, however, the explicit use of the projection operator $\Pi_E f$ and the section operator 
$\mathscr{R}_E f$ to parameterize the error constants in lattice problems has not appeared in 
the literature. 

\begin{lemma}\label{lem:boundary-cubes}
Let $K\subset\R^n$ be a convex body.
For any fixed $m \in \mathbb Z^n$, let $T_m := m + (-\tfrac12,\tfrac12]^n$ be the half–open unit cubes partitioning $\R^n$.
Set
\[
  \mathcal B(K) := \left\{ m \in \Z^n:\ T_m\cap K\neq\varnothing\text{ and }T_m\cap K^c\neq\varnothing\,\right\}
\]
the set of boundary cubes.
Then, it holds
\begin{equation}
  \big|\#(K\cap\Z^n)-\vol_n(K)\big|\ \le\ |\mathcal B(K)|.
\end{equation}
Furthermore, one has
\begin{equation}\label{discre_bounda}
  |\mathcal B(K)|\ \le\ \vol_n\big(\{x\in\R^n:\ \operatorname{distance}(x,\partial K)\le \sqrt n\}\big).
\end{equation}
\end{lemma}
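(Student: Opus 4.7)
The plan is to compare $\#(K \cap \Z^n)$ with $\vol_n(K)$ cube--by--cube using the tiling $\{T_m\}_{m \in \Z^n}$, and then to dominate $|\mathcal B(K)|$ by the Lebesgue measure of a tubular neighbourhood of $\partial K$.

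For the first inequality, I would rewrite
\[
\#(K \cap \Z^n)\ =\ \sum_{m \in \Z^n} \mathbf 1_K(m), \qquad \vol_n(K)\ =\ \sum_{m \in \Z^n} \vol_n(T_m \cap K),
\]
the second identity being valid because the $\{T_m\}$ tile $\R^n$. Next, I would split $\Z^n$ into three classes: interior cubes ($T_m \subset K$), exterior cubes ($T_m \cap K = \varnothing$), and the boundary cubes $\mathcal B(K)$. Since $m \in T_m$, the interior case yields $\mathbf 1_K(m) = 1 = \vol_n(T_m \cap K)$ and the exterior case yields $0 = 0$, so those contributions cancel exactly. Only the boundary cubes contribute, and each by at most $1$ in absolute value since both $\mathbf 1_K(m)$ and $\vol_n(T_m \cap K)$ lie in $[0,1]$; summing over $\mathcal B(K)$ gives the claimed bound.

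For \eqref{discre_bounda}, the plan is to exhibit, for every $m \in \mathcal B(K)$, a point $x_m \in \overline{T_m} \cap \partial K$. Fixing $p \in T_m \cap K$ and $q \in T_m \cap K^c$, the cube $T_m$ is convex (as a product of intervals), hence contains the segment $[p,q]$, and continuity of $\operatorname{dist}(\cdot, K)$ along this segment produces such an $x_m$. Since $\operatorname{diam}(\overline{T_m}) = \sqrt n$, every point of $T_m$ lies within Euclidean distance $\sqrt n$ of $x_m$, and a fortiori within distance $\sqrt n$ of $\partial K$. Because the cubes $\{T_m\}_{m \in \mathcal B(K)}$ are pairwise disjoint and each has unit volume, this yields
\[
|\mathcal B(K)|\ =\ \vol_n\!\Bigl( \bigsqcup_{m \in \mathcal B(K)} T_m \Bigr)\ \le\ \vol_n\!\bigl(\{x \in \R^n : \operatorname{dist}(x, \partial K) \le \sqrt n\}\bigr).
\]

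The argument contains no substantial obstacle: the only minor subtlety is that $T_m$ is half--open rather than closed, which forces one to locate the boundary point $x_m$ in the closure $\overline{T_m}$ and then transfer the distance estimate back to $T_m$ via the diameter bound. Convexity of the half--open box ensures the segment argument is clean, and no use of convexity of $K$ itself is actually needed beyond the continuity of the distance to a closed set.
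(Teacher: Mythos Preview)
Your proof is correct and follows the same approach as the paper: both use the cube-by-cube decomposition to isolate the discrepancy on boundary cubes, then the segment argument to find a point of $\partial K$ inside each boundary cube and conclude via the diameter bound. Your caution about the half-open cube is actually unnecessary (since $T_m$ is convex and $p,q\in T_m$, the segment and hence the boundary point already lie in $T_m$ itself), and your closing observation that convexity of $K$ is not really used---only closedness---is a valid remark that the paper does not make explicit.
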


\begin{proof}
Observe first that
\[
  \vol_n(K)\ =\ \sum_{m\in\Z^n} \vol_n\big(T_m\cap K\big) \quad \text{ and } \quad
  \#(K\cap\Z^n)\ =\ \sum_{m\in\Z^n} \pmb 1_K(m).
\]
If $T_m\subset K$, then $\vol_n(T_m \cap K) = \vol_n(T_m) = 1$ and $\pmb 1_K(m) = 1$.
If $T_m\cap K=\varnothing$, then they both vanish, i.e, $\vol_n(T_m \cap K) = \pmb 1_K(m) = 0$.
Thus, we have
\[
  \#(K\cap\Z^n)-\vol_n(K)\ =\ \sum_{m\in\mathcal B(K)}
  \big( \pmb 1_K(m)-\vol_n(T_m\cap K) \big).
\]
Since $\pmb 1_K(m) \in \{ 0, 1 \}$ and $0 \leq \vol_n(T_m \cap K) \leq 1$, we directly get $|\pmb 1_K(m) - \vol_n(T_m \cap K) | \leq 1$. 
This implies that
\begin{align*}
    \big\vert \#(K \cap \Z^n) - \vol_n(K)  \big\vert \leq \big\vert \mathcal B(K) \big\vert.
\end{align*}

\medskip

It remains to check the inequality \eqref{discre_bounda}.
Fix $m\in\mathcal B(K)$.
By the definition of $\mathcal B(K)$, there exist $x\in T_m\cap K$
and $y\in T_m\cap K^c$. 
Since $T_m$ is convex and $K$ is a convex body, the segment $[x,y]\subset T_m$ intersects $\partial K$ at some point~$z$.
Consequently, we obtain the following estimate
\[
  \operatorname{distance}(w,\partial K)  \leq \|w-z\| \leq \operatorname{diameter}(T_m) = \sqrt n, \quad \text{ for any $w\in T_m$}.
\]
Hence $T_m\subset \{x:\operatorname{distance}(x,\partial K)\le\sqrt n\}$. 
Since the cubes $T_m$ are pairwise disjoint and each has volume $1$, we infer that
\[
  |\mathcal B(K)| = \sum_{m\in\mathcal B(K)}\vol_n(T_m)
  = \vol_n \left(\bigcup_{m\in\mathcal B(K)}T_m\right)
  \leq \vol_n\big(\{x \in \R^n :\operatorname{distance}(x,\partial K)\le\sqrt n\}\big),
\]
which completes the proof.
\end{proof}

\begin{lemma}[Discrepancy via intrinsic volumes]\label{prop:disc-V}
There exists a constant $C_n>0$ depending only on $n$ such that for every convex body $K\subset\R^n$,
\begin{equation}\label{eq:disc-intrinsic}
  \big|\#(K\cap\Z^n)-\vol_n(K)\big|\ \le\
  C_n\,\sum_{j=0}^{n-1} V_j(K).
\end{equation}

Moreover, if $K$ ranges in a one-parameter homothetic family $\{ K_t = tK_0 \}_{t > 0}$ with fixed $K_0$, then there exists a constant $C_{K_0}>0$ such that for all $t>0$,
\begin{equation}\label{eq:disc-intrinsic-(n-1)}
  \big|\#(K_t\cap\Z^n)-\vol_n(K_t)\big|\ \le\ C_nC_{K_0} \big(1+V_{n-1}(K_t)\big).
\end{equation}
\end{lemma}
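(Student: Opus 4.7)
The plan is to apply Lemma~\ref{lem:boundary-cubes} to reduce both estimates to a volumetric bound on the boundary tube
\[
  T := \{x \in \R^n : \operatorname{distance}(x, \partial K) \le \sqrt n\},
\]
and then to decompose $T = T^+ \sqcup T^-$ into the outer shell $T^+ := T \cap K^c$ and the inner shell $T^- := T \cap K$, handling each separately with the tools already introduced in the paper.

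The outer shell is immediate: from $T^+ \subset (K + \sqrt n\, B^n) \setminus K$ and the Steiner formula recalled in the introduction,
\[
  \vol_n(T^+) \le \vol_n(K+\sqrt n\, B^n)-\vol_n(K) = \sum_{j=0}^{n-1}\kappa_{n-j}\,n^{(n-j)/2}\,V_j(K).
\]
For the inner shell I would apply the coarea formula to the $1$-Lipschitz function $d(x) := \operatorname{distance}(x,\partial K)$ on $K$, which satisfies $|\nabla d|=1$ almost everywhere on $\mathrm{int}(K)$. Identifying the level sets with inner-parallel-body boundaries, namely $\{x \in K : d(x)=t\} = \partial(K \ominus tB^n)$ with $K \ominus tB^n := \{x \in K : x + tB^n \subset K\}$, this gives
\[
  \vol_n(T^-) = \int_0^{\sqrt n} \mathcal H^{n-1}\bigl(\partial(K \ominus tB^n)\bigr)\,dt \le 2\sqrt n\, V_{n-1}(K).
\]
The final inequality uses the monotonicity $V_{n-1}(K \ominus tB^n)\le V_{n-1}(K)$ under set inclusion of convex bodies, itself a direct consequence of the Cauchy--Kubota formula~\eqref{kubota} with $j=n-1$: if $L \subset K$ then $L|E \subset K|E$, hence $\vol_{n-1}(L|E) \le \vol_{n-1}(K|E)$, and integrating over $G(n-1,n)$ transfers the inequality to $V_{n-1}$. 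Summing the two shell bounds and applying Lemma~\ref{lem:boundary-cubes} establishes~\eqref{eq:disc-intrinsic} with an explicit constant $C_n$ depending only on~$n$.

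For the homothetic refinement~\eqref{eq:disc-intrinsic-(n-1)} I use the homogeneity $V_j(tK_0) = t^j\,V_j(K_0)$ together with a split on $t \ge 1$ and $t < 1$. When $t \ge 1$, $\sum_{j=0}^{n-1} V_j(K_t) \le t^{n-1}\sum_{j=0}^{n-1}V_j(K_0)$, and the factor $t^{n-1}$ equals $V_{n-1}(K_t)/V_{n-1}(K_0)$---well defined since $V_{n-1}(K_0) > 0$ for any convex body with nonempty interior. When $0 < t < 1$, each summand $t^j V_j(K_0) \le V_j(K_0)$ is controlled by a constant depending only on $K_0$. Absorbing the two regimes into a single constant $C_{K_0}$ gives the stated bound $1 + V_{n-1}(K_t)$.

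The principal technical obstacle is the coarea step for a non-smooth convex body: the almost-everywhere identity $\{d=t\} = \partial(K \ominus tB^n)$ and the computation $|\nabla d| = 1$ almost everywhere both rely on regularity of the concave distance function $d$ on a convex body. I would handle this either by the rectifiability theory of level sets of concave Lipschitz functions on convex domains, or via a standard smoothing $K_\delta := K + \delta B^n$ with $\delta \downarrow 0$, passing to the limit using the continuity of $V_{n-1}$ under Hausdorff convergence; once that regularity framework is in place, every other step is elementary.
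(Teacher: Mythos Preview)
Your proof is correct. Both you and the paper invoke Lemma~\ref{lem:boundary-cubes} and control the outer shell via the Steiner expansion of $\vol_n(K+\sqrt n\,B^n)-\vol_n(K)$; the homothetic refinement is also essentially identical (your case split $t\ge1$ versus $t<1$ is equivalent to the paper's use of $t^{\,j}\le 1+t^{\,n-1}$). The genuine difference is the inner shell $T^-$. The paper writes $\vol_n(K\ominus\rho B^n)=\sum_{j=0}^n(-1)^{n-j}\kappa_{n-j}V_j(K)\rho^{\,n-j}$ and subtracts it from the outer Steiner polynomial, so that the top term $V_n(K)$ cancels. You instead use the coarea formula for the $1$-Lipschitz distance-to-boundary function together with the Kubota monotonicity of $V_{n-1}$, obtaining $\vol_n(T^-)\le 2\sqrt n\,V_{n-1}(K)$ directly. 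Your route is more robust: the alternating inner Steiner identity is \emph{not} valid for arbitrary convex bodies (already for a square in $\R^2$ one gets a $4\rho^2$ term rather than $\pi\rho^2$), so the paper's step requires additional justification, whereas your argument applies to every convex body once the eikonal identity $|\nabla d|=1$ a.e.\ and the level-set identification $\{d=t\}=\partial(K\ominus tB^n)$ are in place. The cost is precisely the regularity discussion you flag, which is standard (Rademacher plus Federer's coarea for Lipschitz maps) and can indeed be bypassed by the smoothing $K_\delta=K+\delta B^n$ with Hausdorff continuity of $V_{n-1}$.
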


\begin{remark}
In general, the dependence on lower-order terms in the asymptotic behavior~\eqref{eq:Nf-asymp-lowe-term} cannot be suppressed. 
A partial reason follows from an observation that the sequence of intrinsic volumes \( \{ V_j(K) \} \) is not monotone up to a dimensional constant; that is, there does not exist a constant \( C_n > 0 \) such that $V_i(K) \leq C_n V_j(K)$ for every convex body \( K \) and all indices \( i < j \). 
To see this, consider the following family of convex bodies:
\[ P_\varepsilon := [0,1]^i \times [0,\varepsilon]^{n - i} 
\]
for some fixed \( 1 < i < n \). 
According to the representation theorem for valuations on parallelotopes~\cite[Theorem 4.2.1--4.2.2]{KR_1997}, the intrinsic volumes of \( P_\varepsilon \) can be explicitly computed as follows:
\begin{equation}\label{V_j=e_j}
    V_j(P_\varepsilon) = c_{j, n} e_j(\underbrace{1, \cdots, 1}_{i}, \underbrace{\varepsilon, \cdots, \varepsilon}_{n - i}), \quad \text{ for every } 0 \leq j \leq n,
\end{equation}
where $c_{j, n} > 0$ is a normalized constant and $e_j$ is the elementary symmetric function; more precisely, $e_0 \equiv 1$ and for $j \geq 1$,
\begin{align*}
    e_j(x_1, \cdots, x_n) = \sum_{1 \leq \ell_1 < \cdots < \ell_j \leq n} x_{\ell_1}x_{\ell_2} \cdots x_{\ell_j}.
\end{align*}
On the one hand, in view of the formula~\eqref{V_j=e_j}, for any $j > i$ and for any $\varepsilon \in (0, 1)$, there exists a constant $C_{n, i, j}$ such that $V_j(P_\varepsilon) \leq C_{n, i, j} \varepsilon^{j - i}$.
On the other hand, since we have
\begin{align*}
    e_i(1, \cdots, 1, \varepsilon, \cdots, \varepsilon) = \sum_{r = 0}^i {i \choose r}{n - i \choose i - r} \varepsilon^{i - r} = 1 + \sum_{r = 0}^{i - 1} {i \choose r}{n - i \choose i - r} \varepsilon^{i - r},
\end{align*}
we observe that $V_i(P_\varepsilon) = c_{i, n} + O(\varepsilon)$ stays bounded from below by a positive constant as $\varepsilon \searrow 0$.
Therefore, we infer that for any fixed $j > i$, there exists a constant $A_{n, i, j} > 0$ such that
\begin{align*}
    \dfrac{V_i(P_\varepsilon)}{V_j(P_\varepsilon)} \geq A_{n, i, j} \varepsilon^{-(j - i)} \to \infty \quad \text{ as } \varepsilon \searrow 0.  
\end{align*}
This implies the desired conclusion.
\end{remark}

\begin{proof}[Proof of Lemma~\ref{prop:disc-V}]
Set $N_\rho(\partial K):=\{x \in \R^n:\operatorname{distance}(x,\partial K)\le\rho\}$.
In view of Lemma~\ref{lem:boundary-cubes}, we have
\[
  \big|\#(K\cap\Z^n)-\vol_n(K)\big|\ \le\ \vol_n\big(N_{\sqrt n}(\partial K)\big).
\]
For a convex $K$ and any $\rho>0$, one has the inclusion
$N_\rho(\partial K)\subset (K + \rho B^n)\setminus (K\ominus \rho B^n)$,
where $A + B$ and $A\ominus B := \bigcap_{a \in A} (B - a)$ denote Minkowski sum and (inner) erosion, respectively.
Then, applying Steiner formula and note that $\vol_n\big(K\ominus \rho B^n\big)
\ =\ \sum_{j=0}^{n}(-1)^{n-j}\kappa_{n-j}V_j(K)\rho^{n-j},$ we obtain
\begin{align*}
  \vol_n\big(N_{\sqrt n}(\partial K)\big)
 \leq & ~ \vol_n((K + \sqrt n\,B^n) \setminus (K \ominus \sqrt{n}B^n) \\
 = & ~ \vol_n(K + \sqrt{n}B^n) - \vol_n(K \ominus \sqrt{n}B^n) \\
 \leq & ~ C_n'\sum_{j=0}^{n-1}\kappa_{n-j}\,V_j(K)\,\sqrt{n}^{\,n-j}.
\end{align*}
(i.e., the term $V_n(K)$ vanishes), then absorbing constants gives ~\eqref{eq:disc-intrinsic}.

\medskip

To prove the estimate~\eqref{eq:disc-intrinsic-(n-1)}, by the homogeneity of intrinsic volumes, we obtain
\[
    \sum_{j=0}^{n-1}V_j(K_t)=\sum_{j=0}^{n-1}t^{\,j}V_j(K_0)
    \leq \sum_{j=0}^{n-1}V_j(K_0)(1+t^{\,n-1})
    \leq  C_{K_0}\,\bigl(1+t^{\,n-1}V_{n-1}(K_0)\bigr),
\]
where 
\[
C_{K_0} := \max \left\{\sum_{j=0}^{n-1}V_j(K_0),\ \frac{\sum_{j=0}^{n-1}V_j(K_0)}{V_{n-1}(K_0)} \right\}.
\]
Combining the above observations and \eqref{eq:disc-intrinsic}, we deduce the estimate \eqref{eq:disc-intrinsic-(n-1)}, which completes the proof.
\end{proof}

The right–hand side of~\eqref{eq:disc-intrinsic} is measured purely through intrinsic volumes, which fits perfectly with the exponential representations in Theorem~\ref{thm.representation}.
Now, for any $\alpha>0$ we define
\[
  N_f(\alpha):=\#\{x\in\Z^n:\ f(x)\le \alpha\}.
\]

\begin{proposition}[Asymptotic with explicit error]\label{thm:lattice-asymp}
Let $f \in \cP_{n, d}$.
Then, for all $\alpha > 0$, it holds
\begin{equation}\label{eq:Nf-asymp-lowe-term}
  N_f(\alpha) = \opV_n(f) \alpha^{n/d} + 
  O_n \left(  \sum_{j=0}^{n-1} \opV_j(f) \alpha^{j/d}  \right).
\end{equation}
In particular, incorporating the dependence on $f$, it holds
\begin{equation}\label{eq:Nf-asymp-higher-term}
    N_f(\alpha) = \opV_n(f) \alpha^{n/d} + O_{n, f}(\alpha^{(n - 1)/d}), \quad \text{ as } \alpha \to \infty.
\end{equation}
\end{proposition}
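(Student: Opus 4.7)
The plan is to read this proposition as the direct consequence of combining the Laplace--Grassmannian representation from Theorem~\ref{thm.representation} with the discrepancy bound from Lemma~\ref{prop:disc-V}. All the hard analytic work has already been done, and the role of this proposition is to translate those results into asymptotic form.

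First, I apply Lemma~\ref{prop:disc-V} to the convex body $K = [f \le \alpha]$, which is indeed a convex body containing the origin in its interior by the standing assumption $f \in \cP_{n,d}$. This gives immediately
\[
\bigl| N_f(\alpha) - \vol_n([f \le \alpha]) \bigr| \le C_n \sum_{j=0}^{n-1} V_j([f \le \alpha]).
\]
Next I rewrite each term using the scaling identity $[f \le \alpha] = \alpha^{1/d}[f \le 1]$ combined with the $j$--homogeneity of $V_j$ under dilation; equivalently, I read off from Theorem~\ref{thm.representation} the identity $V_j([f \le \alpha]) = \alpha^{j/d}\,\opV_j(f)$ for every $0 \le j \le n$. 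In particular, $\vol_n([f \le \alpha]) = V_n([f \le \alpha]) = \alpha^{n/d}\opV_n(f)$, which yields the first asymptotic formula \eqref{eq:Nf-asymp-lowe-term}.

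To pass from \eqref{eq:Nf-asymp-lowe-term} to the sharper form \eqref{eq:Nf-asymp-higher-term}, I apply the second part of Lemma~\ref{prop:disc-V} to the homothetic family $K_t = t\,[f \le 1]$ with $t := \alpha^{1/d}$; this is exactly our family $[f \le \alpha]$ by the scaling identity. This gives
\[
\bigl| N_f(\alpha) - \vol_n([f \le \alpha]) \bigr| \le C_n\,C_{[f\le 1]}\,\bigl(1 + V_{n-1}([f \le \alpha])\bigr) = C_n\,C_{[f\le 1]}\,\bigl(1 + \alpha^{(n-1)/d}\opV_{n-1}(f)\bigr).
\]
For $\alpha \ge 1$ the constant term is absorbed into $O_{n,f}(\alpha^{(n-1)/d})$, and the $f$--dependence is packaged into the implicit constant, completing the proof.

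The argument is essentially a bookkeeping step and presents no real obstacle beyond ensuring that the constants $C_n$ (purely dimensional) and $C_{[f \le 1]}$ (depending on $f$ through the lower--order intrinsic volumes of the unit sublevel set) are correctly tracked to justify the notations $O_n(\cdot)$ and $O_{n,f}(\cdot)$. The only conceptual point worth emphasizing is that the homogeneity of $V_j$ under dilation — which follows from $d$--homogeneity of $f$ combined with the Laplace--Grassmannian representation — is what enables the remainder to be read off as a clean power of $\alpha$, rather than a more delicate quantity involving the geometry of the sublevel set.
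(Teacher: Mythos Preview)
Your proof is correct and follows essentially the same approach as the paper: apply Lemma~\ref{prop:disc-V} to $K=[f\le\alpha]$ and invoke the homogeneity $V_j([f\le\alpha])=\alpha^{j/d}\opV_j(f)$. The only cosmetic difference is in deriving \eqref{eq:Nf-asymp-higher-term}: you invoke the homothetic-family part of Lemma~\ref{prop:disc-V}, whereas the paper simply bounds $\alpha^{j/d}\le\alpha^{(n-1)/d}$ for $\alpha>1$ and $0\le j\le n-1$ directly from \eqref{eq:Nf-asymp-lowe-term}; both are equally valid and immediate.
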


\begin{remark}\normalfont
(i)
The bound in Theorem~\ref{thm:lattice-asymp} matches the classical growth rate $\alpha^{(n-1)/d}$ (as in Davenport-type results). 
The contribution here is to express the constants in a coordinate-free way via intrinsic volumes of the base set ${[f \leq 1]}$, namely through $\opV_j(f)$. 
This makes the dependence on the shape of $[f\le 1]$ explicit and provides uniform control for even small $\alpha$ with the lower order terms. 
For any fixed $f$, the behavior reduces to ${O_{n,f}(\alpha^{(n-1)/d})}$.
We do not claim an improvement in the exponent; the point is a cleaner formulation via the use of Laplace--Grassmannian representations that can be convenient for other variants.

\medskip

(ii)
We impose no curvature assumptions on the boundary of $[f\le1]$.
Stronger error terms are known for smooth strictly convex bodies via oscillatory–integral methods (see, e.g.~\cite{Huxley1996, IvicKratzelKuehleitnerNowak2006, Kratzel1989}); the order of remainder $O_{n, f}(\textstyle\alpha^{(n-1)/d})$ is dimensionally sharp and robust but typically weaker than the best smooth/curved bounds.  
\end{remark}

\begin{proof}[Proof of Proposition~\ref{thm:lattice-asymp}]
Thanks to Lemma~\ref{prop:disc-V}, we know that
\begin{equation}\label{eq:disc-sumVj}
  \bigl| N_f(\alpha)-\vol_n([f \leq \alpha]) \bigr|
  \le\ O_n \left( \sum_{j=0}^{n-1} V_j([f \leq \alpha]) \right).
\end{equation}
Applying Theorem~\ref{thm.representation}, we get $V_j([f \leq \alpha]) = \alpha^{j/d} \opV_j(f)$.
Therefore, we obtain
\[
  \sum_{j=0}^{n-1} V_j([f \leq \alpha])
 = \sum_{j = 0}^{n - 1} \opV_j(f) \alpha^{j/d}.
\]
The above observations imply the estimate ~\eqref{eq:Nf-asymp-lowe-term}.
Furthermore, since $\alpha^{j/d} \leq \alpha^{(n - 1)/d}$ for every $\alpha > 1$ and $j \leq n - 1$, the estimate~\eqref{eq:Nf-asymp-higher-term} follows.
Proposition~\ref{thm:lattice-asymp} is proven.
\end{proof}




Let us continue with the study of primitive asymptotic results.
Denote
\[
\Z^n_{\rm prim}:=\{x\in\Z^n:\ \gcd(x_1,\dots,x_n)=1\} \quad \text{ and } \quad  N^{\rm prim}_f(\alpha):=\#\{x\in\Z^n_{\rm prim}:\ f(x)\le \alpha\}.
\]
Recall that the M\"obius function and the Riemann zeta function
are respectively defined by
\[
\mu(q) =
    \begin{dcases}
        1, & \text{if } q = 1\\
        (-1)^k, & \text{if } q \text{ is the product of } k \text{ distinct primes}\\
        0, & \text{if } q \text{ is divisible by the square of a prime}
    \end{dcases}
\]

and
\[
\zeta(s) := \sum_{q=1}^{\infty} \frac{1}{q^s} \quad \text{ if } \quad \Re(s) > 1.
\]
A standard M\"obius–inversion argument (applied to nonzero vectors) gives, for every $\alpha>0$,
\begin{equation}\label{eq:mobius-identity}
  N^{\rm prim}_f(\alpha)
  =
  \sum_{q=1}^{\infty}\mu(q) \,N_f^\ast\!\left(\frac{\alpha}{q^d}\right) \quad \text{ with } \quad N_f^\ast(\alpha):=N_f(\alpha)-1,
\end{equation}
where the sum is actually finite since $N_f^\ast(\alpha/q^d)=0$ once $q^d>\alpha/\tau$ with $\tau:=\textstyle\min_{x\in\Z^n\setminus\{0\}}f(x)>0$, see Proposition~\ref{prop.cF-coer}.
In the study of primitive asymptotics, it is worth noting that Lipschitz parameterizations yield the main term $\zeta(n)^{-1}\vol_n([f\le \alpha])$ together with a boundary–controlled error term, see e.g.~\cite{Widmer2010Primitive,Widmer2012Lipschitz}.
In what follows, by expressing the expansion in terms of intrinsic volumes, we obtain the correct geometric scaling and a consistent hierarchy of error terms.

\begin{proposition}[Primitive asymptotic]\label{thm:primitive}
Let $f\in\cP_{n,d}$. 
If $n\ge3$, then as $\alpha$ tend to $\infty$, it holds
\[
  N^{\rm prim}_f(\alpha)\ =\ \frac{\opV_n(f)}{\zeta(n)}\,\alpha^{n/d}
  \ +\ O_n\!\Big(\opV_{n-1}(f)\,\alpha^{(n-1)/d}\Big).
\]
In the case $n=2$, the same asymptotic expansion holds with the error $O\!\big(\opV_{1}(f)\,\alpha^{1/d}\log \alpha\big)$.
\end{proposition}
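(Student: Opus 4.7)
The plan is to substitute the asymptotic of Proposition~\ref{thm:lattice-asymp} into the Möbius identity~\eqref{eq:mobius-identity}. The sum in~\eqref{eq:mobius-identity} is effectively truncated at $q \le Q := \lfloor(\alpha/\tau)^{1/d}\rfloor$ with $\tau := \min_{x \in \Z^n \setminus \{0\}} f(x) > 0$, since $N_f^{\ast}(\alpha/q^d)$ vanishes beyond that range. Writing
\[
N_f^{\ast}(\alpha/q^d) = \opV_n(f)\,\alpha^{n/d}\,q^{-n} + O_n\!\left(\sum_{j=0}^{n-1}\opV_j(f)\,\alpha^{j/d}\,q^{-j}\right),
\]
and exchanging the two finite sums, I would split $N^{\rm prim}_f(\alpha)$ into a main piece $\opV_n(f)\,\alpha^{n/d}\sum_{q \le Q}\mu(q)/q^n$ and an error absolutely bounded (using $|\mu(q)| \le 1$) by $\sum_{j=0}^{n-1}\opV_j(f)\,\alpha^{j/d}\sum_{q \le Q} q^{-j}$.

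For the main piece, I would complete the Möbius series via the classical identity $\sum_{q \ge 1}\mu(q)/q^n = 1/\zeta(n)$ (valid since $n \ge 2$) together with the elementary tail bound $|\sum_{q > Q}\mu(q)/q^n| \le \sum_{q > Q} q^{-n} = O_n(Q^{-(n-1)})$. Combined with $Q \asymp_f \alpha^{1/d}$, this yields the leading term $\opV_n(f)\,\alpha^{n/d}/\zeta(n)$ plus a remainder of order $\opV_n(f)\,\alpha^{n/d}\,Q^{-(n-1)} = O_f(\alpha^{1/d})$, which will be absorbed into the announced error in both cases $n \ge 3$ and $n = 2$.

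For the error piece, I would estimate $\sum_{q \le Q} q^{-j}$ in three regimes: $\zeta(j) = O_n(1)$ for $j \ge 2$, $O(\log Q) = O(\log \alpha)$ for $j = 1$, and $O(Q) = O_f(\alpha^{1/d})$ for $j = 0$. When $n \ge 3$, the regime $j = n - 1 \ge 2$ produces the claimed dominant error $O_n(\opV_{n-1}(f)\,\alpha^{(n-1)/d})$, while every $j < n-1$ contribution is strictly smaller in $\alpha$; when $n = 2$, the regime $j = 1 = n-1$ produces precisely the $O(\opV_1(f)\,\alpha^{1/d}\log\alpha)$ error, dominating the $j = 0$ contribution of order $\alpha^{1/d}$.

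The main obstacle I anticipate is the careful bookkeeping of the $f$-dependent constants entering through $\tau$, $Q$, and the tail of $\sum \mu(q)/q^n$: one must verify that all these lower-order remainders (including $\opV_j(f)\alpha^{j/d}\log\alpha$ for $j < n-1$ when $n\ge 3$) are genuinely dominated by $\opV_{n-1}(f)\,\alpha^{(n-1)/d}$, respectively by $\opV_1(f)\,\alpha^{1/d}\log\alpha$ in the planar case. This bookkeeping is precisely where the dichotomy $n \ge 3$ versus $n = 2$ enters, since it is the convergence or divergence of $\sum q^{-(n-1)}$ that determines whether a logarithmic factor survives in the final bound.
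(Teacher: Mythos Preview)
Your proposal is correct and follows essentially the same route as the paper's proof: substitute the asymptotic from Proposition~\ref{thm:lattice-asymp} into the M\"obius identity~\eqref{eq:mobius-identity}, exploit the truncation at $q \le c\,\alpha^{1/d}$, complete the main series to $\sum_{q\ge1}\mu(q)/q^n = 1/\zeta(n)$, and estimate the error sums $\sum_{q\le Q} q^{-j}$ according to whether $j\ge2$, $j=1$, or $j=0$. If anything, you are slightly more explicit than the paper about the tail $\sum_{q>Q}\mu(q)/q^n = O_n(Q^{-(n-1)})$ and about the $f$-dependence that creeps into the implied constants via $\tau$ and the ratios $\opV_j(f)/\opV_{n-1}(f)$; the paper treats these points more briskly.
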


\begin{proof}
Applying the asymptotic behavior~\eqref{eq:Nf-asymp-lowe-term} in Proposition~\ref{thm:lattice-asymp}, we first observe that
\[
  N_f\!\left(\frac{\alpha}{q^d}\right)
  = 
  \opV_n(f)\,\alpha^{n/d} q^{-n}
  +
  O_n \left(
  \sum_{j=0}^{n-1}\opV_j(f)\,\alpha^{j/d} q^{-j}
  \right).
\]
Consequently, in view of the identity~\eqref{eq:mobius-identity}, we obtain
\begin{equation}\label{prim.full.asym}
  N^{\rm prim}_f(\alpha)
  = 
  \opV_n(f)\,\alpha^{n/d}\!\sum_{q\ge1}\frac{\mu(q)}{q^{n}}
  + O_n\!
  \left(\sum_{j=1}^{n-1}\opV_j(f)\,\alpha^{j/d}\!\sum_{q\ge1}\frac{|\mu(q)|}{q^{j}}
    + \sum_{q\le c\,\alpha^{1/d}}|\mu(q)|
    \right).
\end{equation}
Here we have used $N_f^{\ast} = N_f - 1$ to remove the constant term corresponding to $j = 0$. 
The upper bound $q \le c \textstyle\alpha^{1/d}$ reflects the fact that the M\"obius sum in~\eqref{eq:mobius-identity} is a finite sum, since 
$N_f^{\ast}(\alpha/q^d) = 0$ whenever $q^d > \alpha / \tau$. 
The remaining series in $q$ are absolutely convergent for $j \ge 2$ and in particular
\[
\sum_{q \ge 1} \frac{\mu(q)}{q^n} = \frac{1}{\zeta(n)}.
\]
In case $n \geq 3$,  following the above observations, the asymptotic expansion~\eqref{prim.full.asym} as $\alpha \to \infty$ becomes
\[
  N_f^{\mathrm{prim}}(\alpha)
  = \frac{\opV_n(f)}{\zeta(n)} \alpha^{n/d}
  + O_n 
  \big(\opV_{n-1}(f) \alpha^{(n-1)/d} \big).
\]
When $n=2$, the situation is slightly different: 
in this case, the sum $\textstyle\sum_{q\ge1}|\mu(q)|/q^j$ with $j=1$ does not converge but grows like $\log Q$ when truncated at $q\le Q$. 
Since our M\"obius sum effectively stops at $q \le c \textstyle\alpha^{1/d}$, 
this leads to an additional factor $\log \alpha$ in the remainder term. 
As a consequence, the error becomes
\[
O\!\big(\opV_1(f)\,\alpha^{1/d}\log\alpha\big).
\]
Proposition~\ref{thm:primitive} is proven.
\end{proof}

\begin{remark}
A straightforward consequence in arithmetic geometry is the following.  
Fix $n \ge 3$ and $f \in \cP_{n,d}$.
Define the Archimedean $f$–height on $\mathbb{A}^n(\mathbb{Q})$ by 
$H_f(x) := f(x)^{1/d}$, which is an analogue of the height considered in~\cite{ChambertLoirTschinkel2010}.  
Counting rational points on projective space via primitive representatives in a fixed orthant, we have
\[
  \# \big\{[x]  \in \mathbb{P}^{n-1}(\mathbb{Q}) : H_f(x) \le \beta \big\}
  = \frac{1}{2^n}\, N^{\mathrm{prim}}_f(\beta^d).
\]
Applying Proposition~\ref{thm:primitive} with $\alpha = \beta^d$ gives
\[
  \# \big\{ 
  [x] \in \mathbb{P}^{n-1}(\mathbb{Q}) : H_f(x) \le \beta
  \big\}
  = \frac{\opV_n(f)}{\zeta(n)}\, \beta^{n}
  + O_n\!\big(\opV_{n-1}(f)\, \beta^{n-1}\big)
  \quad \text{ as } \quad \beta \to \infty.
\]
\end{remark}

\subsection{Counting on rational subspaces and linear constraints}\label{subsec:sublattice}

Let $L \le \mathbb{Z}^n$ be a primitive\footnote{Equivalently, 
$L = \mathbb{Z}^n \cap E$, where $E := \mathrm{span}(L)$. 
In particular, $L$ has full rank $j := \dim E$ in the Euclidean space $E$, and 
$\det L := \mathrm{vol}_j(E/L)$ denotes the covolume of $L$ with respect to Lebesgue measure on $E$.} 
rank-$j$ sublattice and set $E := \mathrm{span}(L) \in G(j,n)$. 
For $f \in \mathcal{P}_{n,d}$ and $\alpha > 0$, define the sectional counting function
\[
  N_{f,L}(\alpha) ~:=~ \#\{x \in L : f(x) \le \alpha\}.
\]
We will show that the leading term in this lattice–point count is given by the $j$-dimensional volume of the section $[f \le \alpha] \cap E$ divided by $\det L$, with an explicit Lipschitz–type error controlled by intrinsic volumes of the same section.
To avoid confusion, we denote by $ V^{[k]}_j $ the $j$-th intrinsic volume computed within a $k$-dimensional subspace.
This nonstandard double index is introduced for clarity, distinguishing the subspace dimension $k$ from the intrinsic-volume index $j$.

\begin{proposition}[Counting on sublattices]\label{thm:sublattice}
Let $f\in\cP_{n,d}$ and let $L\le\Z^n$ be a primitive rank–$j$ sublattice with $E = \mathrm{span}(L)$. 
Then, for all $\alpha>0$, it holds
\begin{equation}\label{eq:sublattice-main}
  N_{f,L}(\alpha)
  = 
  \frac{\vol_j\big([f\le 1]\cap E\big)}{\det L} \alpha^{j/d}
  + O_j \left(
  \frac{1}{\det L} \sum_{i = 0}^{j - 1} V_{i}^{[j]} \big([f\le 1]\cap E\big) \alpha^{i/d}
  \right).
\end{equation}
Consequently, it holds 
\begin{equation}\label{eq:sublattice-equivalent}
  N_{f,L}(\alpha)
  = \frac{\vol_j\big([f\le 1]\cap E\big)}{\det L}\,\alpha^{j/d}
  + O_{j, f, E}\!\left(\frac{1}{\det L}\alpha^{(j - 1)/d} \right), \quad \text{ as $\alpha \to \infty$}.
\end{equation}
\end{proposition}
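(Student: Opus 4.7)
The plan is to reduce the sublattice count to the already proved $\R^j$ discrepancy bound (Lemma~\ref{lem:boundary-cubes} and Lemma~\ref{prop:disc-V}) via a change of basis that turns $L$ into $\Z^j$. Since $L\subset E$, every $x\in L$ satisfies $f(x)=\mathscr{R}_E f(x)$ by Lemma~\ref{prop.pro-sec}~\textit{(i)}, so $N_{f,L}(\alpha)=\#\{x\in L:\mathscr{R}_E f(x)\le\alpha\}$. Fixing a basis $(b_1,\ldots,b_j)$ of $L$ with matrix $B=[b_1\,\cdots\,b_j]$ and letting $T\colon\R^j\to E$ be the linear isomorphism $Tm=Bm$, I set $g:=\mathscr{R}_E f\circ T$. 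Then $g$ is convex, positively $d$-homogeneous, and positive off the origin, and $N_{f,L}(\alpha)=\#\{m\in\Z^j:g(m)\le\alpha\}$, which is a standard $\Z^j$ count.

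Next I would apply the $\R^j$ version of Lemma~\ref{lem:boundary-cubes} together with the Steiner expansion of Lemma~\ref{prop:disc-V}, carried out in dimension $j$ for the convex body $[g\le\alpha]$, obtaining
\[
\bigl|\#\{m\in\Z^j:g(m)\le\alpha\}-\vol_j([g\le\alpha])\bigr|\ \le\ C_j\sum_{i=0}^{j-1} V_i^{[j]}([g\le\alpha]).
\]
Converting back through $T$ by the change-of-variables formula gives $\vol_j([g\le\alpha])=\vol_j([f\le\alpha]\cap E)/\det L$, and the homogeneity identity $[f\le\alpha]\cap E=\alpha^{1/d}([f\le 1]\cap E)$ combined with the scaling $V_i^{[j]}(\alpha^{1/d}K)=\alpha^{i/d}V_i^{[j]}(K)$ produces both the main term and the $\alpha$-dependence asserted in~\eqref{eq:sublattice-main}. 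The bound~\eqref{eq:sublattice-equivalent} then follows by absorbing the fixed quantities $V_i^{[j]}([f\le 1]\cap E)$ and the inequalities $\alpha^{i/d}\le\alpha^{(j-1)/d}$ (for $\alpha\ge 1$ and $i\le j-1$) into a constant depending on $j$, $f$ and $E$.

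The main obstacle is the intrinsic-volume side of the change of coordinates $T$: since $T$ is linear but not isometric in general, intrinsic volumes are not invariant under $T$ and one cannot identify $V_i^{[j]}([g\le\alpha])$ with $V_i^{[j]}([f\le\alpha]\cap E)$ directly. My preferred remedy is to avoid $T$ altogether on the intrinsic-volume side and instead rerun the tiling argument of Lemma~\ref{lem:boundary-cubes} \emph{inside} $E$, using translates of the fundamental parallelepiped $\mathcal{F}(B)$ in place of unit cubes. The cube diameter $\sqrt{j}$ is then replaced by the $L$-diameter $\rho(L):=\max_{t\in[0,1]^j}\|Bt\|$, and the Minkowski-difference and Steiner argument of Lemma~\ref{prop:disc-V}, now applied in the $j$-dimensional Euclidean space $E$ to $K=[f\le\alpha]\cap E$ with tube radius $\rho(L)$, yields
\[
\bigl|\#(K\cap L)-\vol_j(K)/\det L\bigr|\ \le\ \frac{C_j}{\det L}\sum_{i=0}^{j-1} \rho(L)^{\,j-i}\,V_i^{[j]}(K).
\]
This gives~\eqref{eq:sublattice-main} with an implicit constant that depends on $L$ (and hence on $E$) but not on $\alpha$, which is precisely what is absorbed into the $O_{j,f,E}$ notation of~\eqref{eq:sublattice-equivalent}.
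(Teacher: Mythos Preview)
Your second approach --- rerunning the tiling argument of Lemma~\ref{lem:boundary-cubes} inside $E$ with translates of the fundamental parallelepiped $\mathcal{F}(B)$, and then feeding the resulting tube of radius $\rho(L)$ into the Steiner expansion of Lemma~\ref{prop:disc-V} --- is exactly what the paper does, only the paper compresses it to one line (``Apply Proposition~\ref{prop:disc-V} in dimension $j$ to the convex body $K_{E,\alpha}:=[f\le\alpha]\cap E$'' with lattice $L$) and omits the details you supply. So the route is the same.

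You are in fact more careful than the paper on one point: you observe that replacing $\Z^j$ by $L$ turns the cube diameter $\sqrt{j}$ into the parallelepiped diameter $\rho(L)$, so the implied constant in the error depends on $L$, not only on $j$. The paper's $O_j$ in~\eqref{eq:sublattice-main} glosses over this, and as written that bound cannot hold uniformly over all primitive rank-$j$ sublattices (already for $j=1$ the discrepancy is $O(1)$ while $\tfrac{1}{\det L}V_0^{[1]}$ can be made arbitrarily small by taking $\|v\|$ large). Since $L$ is primitive, $L=\Z^n\cap E$ is determined by $E$, so your $\rho(L)$-dependence is legitimately absorbed into the $O_{j,f,E}$ of~\eqref{eq:sublattice-equivalent}, which is the operative statement anyway. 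Your first approach via the basis map $T$ would also have worked, incidentally: the $V_i^{[j]}([g\le\alpha])$ differ from $V_i^{[j]}([f\le\alpha]\cap E)$ only by factors bounded in terms of $\|T\|$ and $\|T^{-1}\|$, i.e.\ again by constants depending on $L$.
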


\begin{proof}
We shall work inside the Euclidean space $E$ (of dimension $j$), with lattice $L$ whose fundamental domain has volume $\det L$.
Apply Proposition~\ref{prop:disc-V} in dimension $j$ to the convex body
\[
  K_{E, \alpha}
  := [f\le \alpha]\cap E \ =\ [\,\mathscr R_E f\le \alpha\,] = \alpha^{1/d} [ \mathscr R_E f \leq 1]
  \qquad\text{(by \eqref{sec.id})}
\]
to obtain
\[
  \Big|\,\#(K_{E, \alpha} \cap L)-\tfrac{1}{\det L}\vol_j(K_{E, \alpha}) \Big|
  \le C_{j}\!\left(\frac{1}{\det L}\sum_{i=0}^{j-1}V_i^{[j]}(K_{E, \alpha})\right).
\]
Note that the homogeneity of intrinsic volumes gives $V_i^{[j]}(K_{E, \alpha}) = \alpha^{i/d}V_i^{[j]}([f \leq 1] \cap E)$.
Proposition~\ref{thm:sublattice} is proven.
\end{proof}


\begin{corollary}[Linear constraints]\label{cor:linear-constraints}
Let $f \in \cP_{n, d}$, let $A\in\Z^{r\times n}$ have rank $r$.
Set $E:=\ker_\R A$ (so $j=n-r$) and ${L:=\ker A\cap\Z^n}$ (a primitive rank–$j$ sublattice). 
For $b\in\Z^r$, if the set 
\[
  \mathcal S_{A,b}  = \big\{ x\in\Z^n ~:~ Ax=b \big\}
\]
is nonempty, it is a coset $x_0+L$ with $x_0\in\Z^n\cap(A^{-1}b)$ and
\[
  N_{f,A,b}(\alpha)\ :=\ \#\{x\in\mathcal S_{A,b}:\ f(x)\le\alpha\}
  \ =\ \#\big\{y\in L:\ f(x_0+y)\le\alpha\big\}.
\]
Then, for all $\alpha>0$, it holds
\[
 N_{f,A,b}(\alpha)
  = 
  \frac{1}{\det L} \vol_j\big([f \le \alpha] \cap (x_0 + E)\big)
  + O_j 
  \left(
  \frac{1}{\det L}\sum_{i=0}^{j-1}
  V_i^{[j]}\big([f \le \alpha] \cap (x_0 + E)\big)  \right).
\]
Furthermore, in the homogeneous case $b = 0$ (so $x_0 = 0$), the section is linear and it holds
\begin{equation}\label{eq:linear-constraints-asymp}
  N_{f,A,0}(\alpha)
  = \frac{\vol_j\big([f\le 1]\cap E\big)}{\det L}\,\alpha^{j/d}
  + O_{j,f,E}\!\left(
      \frac{1}{\det L}\,\alpha^{(j-1)/d}
    \right), \quad \text{ as $\alpha \to \infty$.}
\end{equation}
\end{corollary}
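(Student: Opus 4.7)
The plan is to reduce Corollary~\ref{cor:linear-constraints} to Proposition~\ref{thm:sublattice} by an affine translation of the counting problem. Since $\mathcal S_{A,b} = x_0 + L$ with $L = \ker A \cap \Z^n$ a primitive rank--$j$ sublattice of $\mathrm{span}(L) = E$, the change of variables $x = x_0 + y$ rewrites the count as $N_{f,A,b}(\alpha) = \#(L \cap K'_\alpha)$, where $K'_\alpha := \big([f \le \alpha] \cap (x_0 + E)\big) - x_0$ is a (possibly empty) convex body in the linear subspace $E$, obtained by translating the affine section back to the origin of $E$.

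Having transferred the count to a sublattice counting inside $E$, I would apply the $j$-dimensional Lipschitz-type discrepancy bound for the full-rank lattice $L \subset E$, exactly as in the proof of Proposition~\ref{thm:sublattice}. This yields
\[
  \#(L \cap K'_\alpha) = \frac{\vol_j(K'_\alpha)}{\det L} + O_j\!\left(\frac{1}{\det L}\sum_{i=0}^{j-1} V_i^{[j]}(K'_\alpha)\right).
\]
Translation invariance of Lebesgue measure and of the intrinsic volumes (the map $y \mapsto x_0 + y$ is an isometry from $E$ onto $x_0 + E$) replaces $K'_\alpha$ on the right-hand side by $[f \le \alpha] \cap (x_0 + E)$, giving the first identity of the corollary.

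For the homogeneous case $b = 0$ with $x_0 = 0$, the section $K'_\alpha = [f \le \alpha] \cap E$ is centered in $E$. The positive $d$-homogeneity of $f$ combined with the degree-$i$ homogeneity of $V_i^{[j]}$ under dilations gives
\[
  \vol_j([f \le \alpha] \cap E) = \alpha^{j/d}\vol_j([f \le 1] \cap E), \qquad V_i^{[j]}([f \le \alpha] \cap E) = \alpha^{i/d} V_i^{[j]}([f \le 1] \cap E).
\]
Substituting into the first identity and using $\alpha^{i/d} \le \alpha^{(j-1)/d}$ for $\alpha \ge 1$ and $0 \le i \le j-1$ absorbs the lower-order contributions into $O_{j,f,E}\!\big(\alpha^{(j-1)/d}/\det L\big)$, which yields~\eqref{eq:linear-constraints-asymp}.

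The only step requiring some care---though not a genuine obstacle---is the extension of the boundary-cube argument of Lemma~\ref{lem:boundary-cubes} from $\Z^n$ to an arbitrary full-rank lattice $L \subset E$: the unit cubes $T_m$ are replaced by translates of a fundamental parallelepiped $\mathcal F(L)$, the boundary-layer thickness becomes $\mathrm{diam}(\mathcal F(L))$, and the Steiner-type expansion inside $E$ produces both the $1/\det L$ normalization and the dimension-dependent constants hidden in the $O_j$ notation. This is precisely the refinement already tacitly used in Proposition~\ref{thm:sublattice}, and applying it here requires no new ideas beyond the affine shift performed at the very first step.
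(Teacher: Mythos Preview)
Your proposal is correct and follows essentially the same route as the paper: translate the affine section $[f\le\alpha]\cap(x_0+E)$ back into the linear subspace $E$, apply the $j$--dimensional discrepancy bound underlying Proposition~\ref{thm:sublattice} to the full--rank lattice $L\subset E$, and use translation invariance of volume and intrinsic volumes to rewrite the result in terms of the original affine slice. Your treatment is in fact more explicit than the paper's two--line proof---in particular, your final paragraph correctly flags the passage from $\Z^j$ to a general lattice $L$ (fundamental parallelepipeds replacing unit cubes) that both the paper and Proposition~\ref{thm:sublattice} use tacitly.
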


\begin{proof}
Inside the affine space $x_0+E$ the set $K_{E,\alpha}:=[f\le\alpha]\cap(x_0+E)$ is a $j$–dimensional convex body and $\mathcal S_{A,b}=x_0+L$ is a lattice coset with fundamental domain of volume $\det L$. 
The proof of Proposition~\ref{thm:sublattice} applies verbatim in the affine setting (translate $K_{E,\alpha}$ to $E$), yielding the desired bound.
\end{proof}

\begin{example}[Hyperplane constraint for a quadratic form]\label{ex:hyperplane-quad}\normalfont
Let $Q\in\R^{n\times n}$ be symmetric and positive definite.
Consider $f(x)=x^\top Qx\in\cP_{n,2}$.  
Let $A\in\Z^{1\times n}$ be primitive, meaning that its entries are coprime.
Set
\[
E:=\{x\in\R^n:\ Ax=0\}\in G(n-1,n) \quad \text{ and } \quad L:=\Z^n\cap E.
\]
Thus, $L$ is a primitive rank--$(n - 1)$ sublattice of $E$ with covolume $\det L$.
Applying Proposition~\ref{thm:sublattice} with $j=n-1$, we get, as $\alpha \to \infty$,
\begin{equation}\label{quadra-01}
\#\{x\in L:\ f(x)\le\alpha\}
=\frac{1}{\det L}\,\vol_{n-1}\big([f\le\alpha]\cap E\big)
+O_{n}\!\left(\frac{1}{\det L}\,V_{n-2}^{[n - 1]}\big([f\le\alpha]\cap E\big)\right).
\end{equation}
It follows from Remark~\ref{rem:PiE-quad} and Lemma~\ref{lem.vol_j} that
\begin{equation}\label{quadra-02}
\vol_{n-1}\big([f\le \alpha]\cap E\big)
= \frac{\alpha^{(n - 1)/2}}{\Gamma(1+(n - 1)/2)}\int_E e^{-\mathscr R_E f(y)}\,dy=
\kappa_{n-1} \frac{\alpha^{(n - 1)/2}}{\sqrt{\det(Q|_E)}},
\end{equation}
where $Q|_E$ is the restriction of $Q$ to $E$. 
Furthermore, the homogeneity of $f$ implies
\begin{equation}\label{quadra-03}
    V_{n-2}^{[n - 1]} \big([f\le \alpha]\cap E\big)
    = \alpha^{\frac{n-2}{2}}V_{n-2}^{[n - 1]}\big([f\le 1]\cap E\big).
\end{equation}
Combining \eqref{quadra-01}, \eqref{quadra-02} and \eqref{quadra-03}, we finally obtain
\[
\#\{x\in\Z^n:\ Ax=0,\ x^\top Qx\le \alpha\}
= \frac{\kappa_{n-1} \alpha^{(n - 1)/2}}{\det L\sqrt{\det(Q|_E)}}
+ O_{n, Q, A}
\Big(\frac{\alpha^{(n - 2)/2}}{\det L}\Big), \,\, \text{ as } \alpha \to \infty.
\]
\end{example}

\subsection{Theta function asymptotics}

Define the Epstein–type theta series associated with a function $f$ by
\[
  \Theta_f(t) := \sum_{x \in \mathbb{Z}^n} e^{-t\,f(x)}, \quad \text{ for every } t > 0.
\]
Heuristically, the leading behaviour of $\Theta_f(t)$ as $t \searrow 0$
is governed by the volume of the level set of $f$:
a Poisson summation argument (when applicable) or the standard volume heuristic suggests that, 
\[
  \Theta_f(t)
  \sim t^{-n/d} 
  \int_{\mathbb{R}^n} e^{-f(x)}\,dx \quad \text{ as $t \searrow 0$},
\]
as discussed in the general references~\cite{Kratzel1989,Huxley1996}.
The following proposition makes this asymptotic explicit.

\begin{proposition}[Small-scale asymptotics]\label{prop:theta}
Let $f\in\cP_{n,d}$. 
Then, it holds
\[
  \Theta_f(t) =
  \Gamma(1 + n/d) \opV_n(f)\,t^{-n/d}
  +\ O_n\!\left(\Big(\sum_{j=0}^{n-1}\opV_j(f)\Big)\,t^{-(n-1)/d}\right) \quad \text{as $t \searrow 0$}.
\]
Equivalently, it holds
\[
\Theta_f(t)=\Gamma\!\Big(1+\frac{n}{d}\Big)\,\opV_n(f)\,t^{-n/d}
\ +\ O_n\!\big(\mathds W(f)\,t^{-(n-1)/d}\big) \quad \text{as $t \searrow 0$},
\]
where $\mathds W(f):=\max_{0\le j\le n-1}\opV_j(f)$.
\end{proposition}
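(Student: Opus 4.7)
The plan is to realize $\Theta_f(t)$ as a one–dimensional Laplace transform of the lattice counting function $N_f$ and then substitute the uniform expansion from Proposition~\ref{thm:lattice-asymp}, thereby transferring the intrinsic–volume expansion for $N_f(\alpha)$ (in the variable $\alpha$) into a small–$t$ expansion for $\Theta_f(t)$.

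First, I would rewrite each summand using the identity $e^{-tf(x)}=t\int_{f(x)}^{\infty}e^{-t\alpha}\,d\alpha$. Exchanging sum and integral by Fubini (all terms are nonnegative), I obtain the key representation
\begin{equation*}
  \Theta_f(t)\ =\ t\int_0^\infty e^{-t\alpha}\,N_f(\alpha)\,d\alpha.
\end{equation*}
This is the exact analogue, on the arithmetic side, of the layer–cake identity used in Lemma~\ref{lem.vol_j}. Next, I would invoke Proposition~\ref{thm:lattice-asymp} in the uniform form
\begin{equation*}
  \bigl|N_f(\alpha)-\opV_n(f)\,\alpha^{n/d}\bigr|\ \le\ C_n\sum_{j=0}^{n-1}\opV_j(f)\,\alpha^{j/d}, \quad\text{for every }\alpha>0,
\end{equation*}
and insert it inside the Laplace integral above.

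For the main term, a direct change of variables $u=t\alpha$ gives
\begin{equation*}
  t\int_0^\infty e^{-t\alpha}\,\alpha^{n/d}\,d\alpha\ =\ \Gamma\!\Big(1+\tfrac{n}{d}\Big)\,t^{-n/d},
\end{equation*}
and more generally $t\int_0^\infty e^{-t\alpha}\alpha^{j/d}d\alpha=\Gamma(1+j/d)\,t^{-j/d}$ for each $0\le j\le n-1$. Thus the error contribution is bounded by $C_n\sum_{j=0}^{n-1}\Gamma(1+j/d)\,\opV_j(f)\,t^{-j/d}$. Since we restrict attention to $t\searrow 0$, I would conclude by observing that $t^{-j/d}\le t^{-(n-1)/d}$ for all $j\in\{0,\dots,n-1\}$ once $t\le 1$, so the whole remainder is absorbed into $O_n\!\big((\sum_{j=0}^{n-1}\opV_j(f))\,t^{-(n-1)/d}\big)$. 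The equivalent form with $\mathds W(f)=\max_{0\le j\le n-1}\opV_j(f)$ then follows from the trivial bound $\sum_{j=0}^{n-1}\opV_j(f)\le n\,\mathds W(f)$.

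There is no real analytic obstacle: the argument is a clean Abel/Laplace reduction that turns intrinsic–volume control of $N_f$ into intrinsic–volume control of $\Theta_f$. The only minor point requiring care is uniformity of the remainder at small $\alpha$, where $N_f(\alpha)$ drops to $1$ once $\alpha<\min_{x\in\Z^n\setminus\{0\}}f(x)$ (positive by Proposition~\ref{prop.cF-coer}); this is automatically handled by the $j=0$ term since $\opV_0(f)=1$, ensuring the error bound holds throughout $(0,\infty)$ and so the Laplace integral converges and yields the claimed asymptotics.
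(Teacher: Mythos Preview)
Your argument is correct and follows essentially the same route as the paper: both reduce $\Theta_f(t)$ to the Laplace transform $t\int_0^\infty e^{-t\alpha}N_f(\alpha)\,d\alpha$ and then plug in the uniform expansion of Proposition~\ref{thm:lattice-asymp}. The only cosmetic difference is that the paper reaches this representation via Stieltjes integration by parts (obtaining $N_f^\ast=N_f-1$ in the integrand), whereas you use the layer--cake identity and Fubini directly; the two versions differ by the harmless constant $t\int_0^\infty e^{-t\alpha}\,d\alpha=1$, which is absorbed in the $j=0$ error term.
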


\begin{proof}
Observe first that 
\[
    N_f(s) = \# \big\{ x \in \Z^n ~:~ f(x) \leq \alpha \big\}
\]
is the pushforward of the counting measure  on $\mathbb{Z}^n$ by $f$.
It follows that, as a Lebesgue–Stieltjes integral,
\[
  \Theta_f(t)
  = \sum_{x\in\mathbb{Z}^n} e^{-t f(x)}
  = \int_0^\infty e^{-t s}\,dN_f(s).
\]
Notice that $N_f( 0) = 1$ (only the origin has value $0$) and $\textstyle\lim_{s \to \infty} e^{-ts}N_f(s) = 0$ since $N_f(s) \ll 1 + s^{n/d}$. 
Therefore, applying the Stieltjes integration by parts to the case $e^{-ts}$ and $N_f(s)$, we obtain
\begin{equation}\label{THETA}
    \begin{split}
    \Theta_f(t)
  &= \big[e^{-ts}N_f(s)\big]_{0}^{\infty}
  + t\int_0^\infty e^{-ts}N_f(s)ds\\
  &= -N_f(0) + t\int_0^\infty e^{-ts}N_f(s)ds\\
  &= t\int_0^\infty e^{-ts}N_f^\ast(s)ds.
  \end{split}
\end{equation}
with $N_f^\ast=N_f-1$. Applying Proposition~\ref{thm:lattice-asymp}, we have
\[
  N_f^\ast(s)
  = \opV_n(f) s^{n/d}
    + O_n \!\Big(\sum_{j=0}^{n-1} \opV_j (f) s^{j/d}\Big), \quad \text{ for every } s > 0.
\]
Inserting this into the identity~\eqref{THETA} and using the formula
\[
  \int_0^\infty e^{-ts}s^\beta\,ds \;=\; \Gamma(\beta+1)\,t^{-\beta-1}
  \quad \text{ for } \beta > - 1,
\]
we arrive at
\[
\begin{aligned}
  \Theta_f(t)
    = & ~ t\opV_n(f) \int_0^\infty e^{-ts}s^{n/d}\,ds
     + t  \cdot O_n \Bigg(
     \sum_{j=0}^{n-1}\opV_j(f)\int_0^\infty e^{-ts}s^{j/d}\,ds
     \Bigg)
     \\
    = & ~ 
    \Gamma \big(1+ n/d \big) \opV_n(f)\,t^{-n/d}
     +
     O_n
     \Bigg(
     \sum_{j=0}^{n-1}\Gamma\big(1+j/d\big) \opV_j(f)\,t^{-j/d}
     \Bigg).
\end{aligned}
\]
As $t \searrow 0$, the dominant error term corresponds to $j = n-1$, whence
\[
  \Theta_f(t)
  = 
  \Gamma\big(1+ n/d \big) \opV_n(f)t^{-n/d}
    + O_n\!\big(\mathds W(f) t^{-(n-1)/d}\big),
\]
which completes the proof.
\end{proof}

Our theta asymptotic records only the first term with a boundary‑driven remainder.  
For quadratic $f$ one has modular/Poisson structures leading to finer expansions; for a general homogeneous $f$ we do not attempt second‐order terms.  
To end, let us conclude with the classical Gauss circle problem.

\begin{example}[Gauss circle problem, that is, $n=2$, $d=2$ and $f(x, y) = x^2+y^2$]\label{ex:circle}\normalfont
In this case, $[f\le\alpha]$ is the disk of radius ${R=\sqrt\alpha}$.
Then, one directly has
\[
\opV_2(f)=\frac{1}{\Gamma(2)}\int_{\R^2} e^{-(x^2+y^2)}\,dx\,dy=\pi.
\]
It follows from Proposition~\ref{thm:lattice-asymp} that
\[
N_f(\alpha)=\pi\,\alpha\ +\ O\!\big(\sqrt\alpha\big), \quad \text{ as $\alpha \to \infty$}
\]
and from Proposition~\ref{thm:primitive} on counting primitive points, that
\[
\#\{(x,y)\in\Z^2_{\rm prim}:\ x^2+y^2\le\alpha\}
=\frac{\pi}{\zeta(2)}\,\alpha\ +\ O \!\big(\sqrt\alpha\log\alpha\big) \quad \text{ as $\alpha \to \infty$}.
\]
Concerning the expansion for the theta series, Proposition~\ref{prop:theta} yields
\[
\Theta_f(t)\ =\ \frac{\pi}{t}\ +\ O\!\big(t^{-1/2}\big) \quad \text{ as } t \searrow 0.
\]
\end{example}

\appendix

\section{Appendix: On the choice of $\cP_{n, d}$}
We choose the positive cone $\cP_{n,d}$ as our main setting because the theory of intrinsic volumes has been developed for convex bodies.
Remark that the notion of dual volumes is also meaningful even for star bodies. 
Working within $\cP_{n,d}$ guarantees that both intrinsic and dual volumes are well defined, since the sublevel sets of any $f \in \cP_{n,d}$ are convex bodies. 
In this sense, $\cP_{n,d}$ provides a natural unification: it offers a single class in which both intrinsic and dual volumes can be consistently defined.
The following propositions make this point precise.

\begin{proposition}\label{prop.cF-coer}
Let $f \not\equiv 0$ be a lower semicontinuous nonnegative and positively $d$--homogeneous function.
Then, the following assertions are equivalent:
\begin{itemize}
	\item[(i)] $[f \leq 1]$ is bounded (equivalently, compact);
	\item[(ii)] $\textstyle\min_{e \in \Sph^{n - 1}} f(e) > 0$;
	\item[(iii)] there exists $\varpi > 0$ such that $f(x) \geq \varpi \| x \|^d$, for every $x \in \R^n$;
	\item[(iv)] (definition of $\cP_{n, d}$) $f(x) > 0$ for every $x \neq 0$.
\end{itemize}
\end{proposition}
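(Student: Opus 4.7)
The plan is to close the chain of implications
$(\text{ii}) \Rightarrow (\text{iii}) \Rightarrow (\text{i}) \Rightarrow (\text{iv}) \Rightarrow (\text{ii})$.
The main engine throughout is positive $d$--homogeneity, which reduces every pointwise statement about $f$ on $\R^n$ to its behaviour on $\Sph^{n-1}$; the lower-semicontinuity hypothesis enters in exactly two places, namely to ensure closedness of $[f \le 1]$ and to secure attainment of the infimum on the compact sphere.

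For $(\text{ii}) \Rightarrow (\text{iii})$, I would invoke the identity $f(x) = \|x\|^{d}\, f(x/\|x\|)$, valid for every $x \neq 0$, set $\varpi := \min_{\Sph^{n-1}} f > 0$, and conclude $f(x) \ge \varpi \|x\|^{d}$ for all $x \in \R^n$ (with $x = 0$ handled by observing that homogeneity forces $f(0) = 0$). The step $(\text{iii}) \Rightarrow (\text{i})$ then reduces to the inclusion $[f \le 1] \subset \varpi^{-1/d} B^n$, which gives boundedness; closedness is supplied by lower semicontinuity, yielding compactness.

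The implication $(\text{i}) \Rightarrow (\text{iv})$ is most cleanly proved by contraposition: if some $x_0 \neq 0$ satisfies $f(x_0) = 0$, then $d$--homogeneity produces $f(\lambda x_0) = 0$ for every $\lambda > 0$, so the entire ray $\{\lambda x_0 : \lambda > 0\}$ lies inside $[f \le 1]$, contradicting boundedness. Finally, $(\text{iv}) \Rightarrow (\text{ii})$ follows from the Weierstrass-type fact that a lower semicontinuous function attains its infimum on a compact set: there is $e_\star \in \Sph^{n-1}$ with $f(e_\star) = \min_{\Sph^{n-1}} f$, and $(\text{iv})$ forces $f(e_\star) > 0$.

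None of the steps presents a genuine obstacle; the only point worth underlining is that lower semicontinuity is essential in $(\text{iv}) \Rightarrow (\text{ii})$, since without it one could easily have $f > 0$ on $\Sph^{n-1}$ with $\inf_{\Sph^{n-1}} f = 0$, which would break the equivalence and, downstream, the integrability estimates used in Remark~\ref{rem:integrability}.
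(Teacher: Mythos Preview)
Your proposal is correct and proceeds by essentially the same ideas as the paper: homogeneity to scale from $\Sph^{n-1}$ to $\R^n$, the Weierstrass argument (lower semicontinuity on a compact sphere) to secure the minimum, and the ray $\{\lambda x_0:\lambda>0\}$ to force unboundedness when $f$ vanishes off the origin. The only difference is the cyclic order in which you close the equivalences---you run $(\text{ii})\Rightarrow(\text{iii})\Rightarrow(\text{i})\Rightarrow(\text{iv})\Rightarrow(\text{ii})$ whereas the paper runs $(\text{i})\Rightarrow(\text{ii})\Rightarrow(\text{iii})\Rightarrow(\text{iv})\Rightarrow(\text{i})$---and this is immaterial.
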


\begin{proof}
\noindent\textit{(i) $\Longrightarrow$ (ii).}
Assume $[f\le1]$ is bounded. Consider the restriction of $f$ to the compact set $\Sph^{n-1}$.
Since $f$ is lower semicontinuous and $\Sph^{n-1}$ is compact, there exists $e_0 \in \Sph^{n - 1}$ such that $\varpi := \textstyle\min_{\|e\|=1} f(e) = f(e_0) \geq 0$.
If $\varpi = 0$, then $f(e_0)  = 0$ and by homogeneity
$f(te_0)=t^d f(e_0)=0$ for all $t > 0$.
Hence, we have the inclusion $\{t e_0:\ t>0\}\subset [f\le1]$.
This contradicts the boundedness of $[f\le1]$. 
Therefore $\varpi > 0$.

\medskip
\noindent\textit{(ii) $\Longrightarrow$ (iii).}
Choose $\varpi = \textstyle\min_{\|e\|=1} f(e)>0$. 
The case $x = 0$ is vacuous.
For any $x \neq 0$, by homogeneity, we have
\[
f(x)=\|x\|^d f(x/\|x\|)\ \ge\ \varpi \,\|x\|^d.
\]

\medskip
\noindent\textit{(iii) $\Longrightarrow$ (iv).}
If \textit{(iii)} holds true, then for $x\neq0$ we clearly have $f(x)\ge \varpi \|x\|^d>0$.

\medskip
\noindent\textit{(iv) $\Longrightarrow$ (i).}
By the lower semicontinuity of $f$ and the compactness of $\mathbb S^{n - 1}$, we have $\varpi := \textstyle\min_{\|e\|=1} f(e) = f(e_0)$ for some $e_0 \in \Sph^{n - 1}$.
Then, \textit{(iv)} implies that $\varpi = f(e_0) > 0$.
As we have shown in  the implication \textit{(ii)$\Longrightarrow$(iii)}, for all $x$ we have
$f(x)\ge \varpi \,\|x\|^d$. 
Therefore, we obtain
\[
[f\le1]\ \subset\ \big\{ x: \varpi \,\|x\|^d\le 1 \big\}
= \big\{x:\ \|x\|\le \varpi^{-1/d} \big\},
\]
which is bounded. 
This completes the proof.
\end{proof}

\begin{proposition}
    Let $f: \R^n \to [0, + \infty)$ be a convex and positively $d$--homogeneous function.
    Then, the following assertions are equivalent:
    \begin{itemize}
        \item[(i)] $f$ is convex;

        \item[(ii)] $[f \leq 1]$ is convex.
    \end{itemize}
\end{proposition}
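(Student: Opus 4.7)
The direction (i) $\Rightarrow$ (ii) is immediate: a sublevel set of any convex function is convex, so nothing particular to positive $d$--homogeneity is needed. The real content lies in the converse (ii) $\Rightarrow$ (i), and the plan is to exploit $d$--homogeneity together with the convexity of $t \mapsto t^{d}$ on $[0,\infty)$ (valid since $d \geq 2$) in order to reduce the functional convexity inequality for $f$ to the set--theoretic convexity of $[f\le 1]$.

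First, I would handle the degenerate case where $f(x)=f(y)=0$. Positive $d$--homogeneity gives $f(tx)=f(ty)=0$ for all $t>0$, hence $t(\lambda x+(1-\lambda)y) \in [f\le 1]$ for every $t>0$ by convexity of $[f\le 1]$. Homogeneity then forces $f(\lambda x+(1-\lambda)y)=0$, and the convexity inequality holds trivially. For the generic case where at least one of $f(x), f(y)$ is positive, set $a := f(x)^{1/d}$, $b := f(y)^{1/d}$ and (in the strictly positive case) the normalized points $u := x/a$, $v := y/b$, so that $f(u)=f(v)=1$ and hence $u,v \in [f \le 1]$. The key algebraic identity is
\[
\lambda x + (1-\lambda) y \;=\; s\,\bigl(\mu u + (1-\mu) v\bigr), \qquad s := \lambda a + (1-\lambda) b, \quad \mu := \lambda a/s \in [0,1].
\]
Convexity of $[f\le 1]$ gives $\mu u + (1-\mu)v \in [f \le 1]$, and $d$--homogeneity then yields
\[
f(\lambda x + (1-\lambda) y) \;=\; s^{d}\, f(\mu u + (1-\mu) v) \;\le\; s^{d} \;=\; \bigl(\lambda a + (1-\lambda) b\bigr)^{d}.
\]

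Finally, I would close the argument by invoking Jensen's inequality for the convex function $t\mapsto t^{d}$ on $[0,\infty)$, which gives
\[
\bigl(\lambda a + (1-\lambda) b\bigr)^{d} \;\le\; \lambda a^{d} + (1-\lambda) b^{d} \;=\; \lambda f(x) + (1-\lambda) f(y),
\]
thereby establishing the convexity of $f$. The only minor obstacle is bookkeeping the boundary case where exactly one of $f(x), f(y)$ vanishes; this is handled by a direct limit argument (replace the vanishing value by $\varepsilon>0$, apply the argument above, and let $\varepsilon \searrow 0$, using continuity of the right--hand side), or equivalently by noting that the formula for $s, \mu$ remains valid as long as at least one of $a,b$ is positive. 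No substantial difficulty arises, and the overall argument is essentially a rescaling plus one application of Jensen.
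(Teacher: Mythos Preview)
Your proof is correct, and at its core it is the same idea as the paper's: both arguments amount to showing that $f^{1/d}$ is convex and then composing with the convex nondecreasing map $t\mapsto t^d$ on $[0,\infty)$. The paper packages the first step more concisely by recognizing that $f^{1/d}$ is precisely the Minkowski gauge $\rho_K$ of $K=[f\le 1]$ (since $x\in tK \iff f(x)\le t^d$), whose convexity is a standard fact; you instead unfold this into a direct rescaling argument. The gauge route is shorter and automatically absorbs the degenerate case $f(x)=0$, whereas your approach needs the separate bookkeeping you describe.

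One small caution on that bookkeeping: your alternative remark that ``the formula for $s,\mu$ remains valid as long as at least one of $a,b$ is positive'' is not quite right. When $a=0$ but $x\neq 0$, the quantities $s=(1-\lambda)b$ and $\mu=0$ are indeed well-defined, but $u=x/a$ is not, and the identity $\lambda x+(1-\lambda)y=s(\mu u+(1-\mu)v)$ genuinely fails (the right-hand side collapses to $(1-\lambda)y$, losing the $\lambda x$ term). Your $\varepsilon$-limit argument is the correct fix: set $a_\varepsilon=\varepsilon^{1/d}$ and $u_\varepsilon=x/a_\varepsilon$, note that $f(u_\varepsilon)=0\le 1$ so $u_\varepsilon\in[f\le 1]$ (you only ever need membership in the sublevel set, not $f(u_\varepsilon)=1$), run the identity and the Jensen step to get $f(\lambda x+(1-\lambda)y)\le \lambda\varepsilon+(1-\lambda)f(y)$, and let $\varepsilon\searrow 0$.
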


\begin{proof}
The implication $(i) \Longrightarrow (ii)$ is direct and so, it suffices to check $(ii) \Longrightarrow (i)$.
Suppose that $K = [f \leq 1]$ is convex.
Recall that the gauge function of $K$ is defined by
\[
    \rho_K(x) = \inf \big\{ t > 0: x \in t K  \big\} \in [0, + \infty)
\]
is convex and positive $1$--homogeneous.
Since $f$ is $d$--homogeneous, we have $x \in tK$ if and only if $f(x) \leq t^d$. 
Therefore, we have $\rho_K(x) = f(x)^{1/d}$ and equivalently $f(x) = \rho_K(x)^d$.
Thanks to the convexity of $\rho_K$ and using the fact that $s \mapsto s^d$ is convex and nondecreasing on $[0, + \infty)$, we infer that $f$ is convex. 
This completes the proof.
\end{proof}

\paragraph{Acknowledgments.}
Khai-Hoan Nguyen-Dang thanks Phu Nhan Chung for his interest in this work and for the invitation to speak.
He also gratefully acknowledges the support of the Morningside Center of Mathematics, Chinese Academy of Sciences.
Tr\'i Minh L\^e is partially supported by the FWF (Austrian Science Fund) grant \texttt{DOI 10.55776/P-36344N}.
We thank Manh Hung Le and Phan Quoc Bao Tran for their early interest and helpful discussions at the beginning of this project.

\nocite{*}
\bibliographystyle{siam}
\bibliography{biblio}


\noindent Tr\'i Minh L\^E

\medskip

\noindent Institut f\"{u}r Stochastik und Wirtschaftsmathematik, VADOR E105-04
\newline TU Wien, Wiedner Hauptstra{\ss }e 8, A-1040 Wien\medskip
\newline\noindent E-mail: \texttt{minh.le@tuwien.ac.at}
\newline\noindent\texttt{https://sites.google.com/view/tri-minh-le} \smallskip\newline
\noindent Research supported by the FWF (Austrian Science Fund) grant \texttt{DOI 10.55776/P-36344N}.

\bigskip

\noindent Khai--Hoan NGUYEN--DANG

\medskip

\noindent Morningside Center of Mathematics, Chinese Academy of Sciences, Beijing, China
\newline No. 55, Zhongguancun East Road, Haidian District, Beijing 100190 \medskip
\newline\noindent E-mail: \texttt{khaihoann@gmail.com}
\newline\noindent\texttt{https://sites.google.com/view/nguyen-dang-khai-hoan/home} \smallskip\newline
\noindent
\end{document}